\numberwithin{equation}{section}
\newcommand{\R}{\mathbb R}
\newcommand{\C}{\mathbb C}
\newcommand{\be}{\begin{equation}}
\newcommand{\ee}{\end{equation}}
\newcommand{\ba}{\begin{eqnarray}}
\newcommand{\ea}{\end{eqnarray}}
\def\R{\mathbb{R}}
\def\C{\mathbb{C}}
\def\Cc{\mathcal{C}}
\def\Fc{\mathcal{F}}
\def\Dc{\mathcal{D}}
\def\Lc{\mathcal{L}}
\def\bu{\bar{u}}
\def\bv{\bar{v}}
\def\tU{\tilde{U}}
\def\tV{\tilde{V}}
\def\tC{\tilde{C}}
\def\tL{\tilde{\Lc}}
\def\Lip{\mathrm{Lip}}
\newtheorem{rema}{Remark}
\newtheorem{defi}{Definition}
\newtheorem{prop}{Proposition}
\newtheorem{coro}{Corollary}
\newtheorem{theo}{Theorem}
\newtheorem{lemm}{Lemma}
\begin{document}

\title[Stabilization of quasilinear hyperbolic systems of balance laws]{Boundary stabilization of quasilinear hyperbolic systems of balance laws: 
Exponential decay for small source terms}



\author[Gugat]{Martin Gugat}
\address{Friedrich-Alexander Universit\"at Erlangen-N\"urnberg (FAU), Department Mathematik, Cauerstr. 11, 91058 Erlangen, Germany}
\email{martin.gugat@fau.de}
\author[Perrollaz]{Vincent Perrollaz}
\address{Laboratoire de Math\'ematiques et Physique Th\'eorique, Universit\'e de Tours, UFR Sciences et Techniques, Parc de Grandmont, 37200 Tours, France}
\email{Vincent.Perrollaz@lmpt.univ-tours.fr}
\author[Rosier]{Lionel Rosier}
\address{Centre Automatique et Syst\`emes (CAS) and Centre de Robotique (CAOR), MINES ParisTech, PSL Research University, 60 Boulevard Saint-Michel, 75272 Paris Cedex 06, France}
\email{Lionel.Rosier@mines-paristech.fr}

\begin{abstract}
We investigate the long-time behavior of solutions of quasilinear hyperbolic systems with transparent boundary conditions when small source terms are 
incorporated in the system. Even if the finite-time stability of the system is not preserved, it is shown here that an exponential convergence towards
 the steady state still holds with a decay rate which is proportional to the logarithm 
of the amplitude of the source term. The result is stated for a system with dynamical boundary conditions in order to deal with initial data that are free of any compatibility condition.  
\end{abstract}

\subjclass[2000]{35L50,35L60,76B75,93D15}

\keywords{System of balance laws, shallow water equations, telegraph equation, finite-time stability, dynamical boundary conditions, exponential stability, decay rate}

\noindent

%






\maketitle


\section {Introduction}
Solutions of certain hyperbolic systems can reach the equilibrium state in finite time. Such a property, called {\em finite-time stability} in 
\cite{APR,PR1,PR2} or {\em super-stability} in \cite{SLX}, was first noticed  in \cite{komornik,majda} for the (linear)
wave equation.  The extension of such a property to the wave equation on networks was addressed in \cite{APR,SLX}.  

Fortunately, the finite-time stability still occurs for systems of $2\times 2$  quasilinear hyperbolic equations of diagonal form without source terms, as it was noticed in 
\cite{LS} with initial data satisfying some compatibility conditions to prevent the emergence of shockwaves, and next in  \cite{PR1,PR2} for arbitrary
initial data by replacing homogeneous boundary conditions by some dynamical boundary conditions.  

The finite-time stabilization of a quasilinear hyperbolic system with source terms  seems to be very challenging. In \cite{CVKB}, the authors proved
that a  $2\times 2$  {\em linear} hyperbolic system with source terms can be stabilized to the origin in finite-time by using some boundary feedback laws designed with the backstepping
approach.   

On the other hand, the finite-time stability of a system may be lost when a small, bounded perturbation is added to the system. A famous example is provided 
by the {\em telegraph equation}
\begin{eqnarray}
\partial _t^2 y-\partial _x^2 y + \epsilon \partial _t y&=&0,\quad (t,x)\in (0,+\infty) \times (0,L), \label{i1}\\
y(t,0)=0, \quad y_x(t,L) &=&-y_t(t,L).  \label{i2} 
\end{eqnarray}
However, as it was noticed in \cite{gugat} for \eqref{i1}-\eqref{i2} or more generally for a nonlinear perturbation of the wave equation, the 
exponential stability of the system is preserved, with a decay rate proportional to $\ln (\epsilon ^{-1})$.  See also \cite{GLW} for 
the exponential stabilization of the isothermal Euler equations and \cite{DLP} for the loss of the stability when incorporating an arbitrarily  small delay in a transparent boundary condition for the wave equation.

The aim of this paper is to show that the robustness property noticed in  \cite{gugat}  is shared by most of the finite-time stable systems. The first result in this paper shows that a linear finite-time stable system with a (small) disturbance is exponentially stable with a decay rate proportional to the logarithm of the amplitude of the perturbation. We refer the reader to \cite[Theorem 4.2]{xu} for a sufficient condition involving the resolvent for the finite-time stability of a linear system. 

 \begin{theo}
 \label{thm1}
 Let $A$ be an operator generating a strongly continuous semigroup $(e^{tA})_{t\ge 0}$ in an Hilbert space $H$, and let $B\in {\mathcal L}(H)$ be a bounded operator. 
 Assume that $e^{TA}=0$ for some $T>0$. Then there exist some positive numbers  $\epsilon _0,M,C$ such that for any $\epsilon \in (0,\epsilon _0)$, it holds 
 \begin{equation}
 \label{D1}
 \Vert e^{ t (A+\epsilon B) } \Vert _{ {\mathcal L} (H) } \le M\inf (1, \frac{ e^{- (C\ln \epsilon ^{-1})t  }  }{\epsilon}  ) \quad  \forall t\ge 0.  
 \end{equation}
 \end{theo}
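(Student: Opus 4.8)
The plan is to exploit the nilpotency $e^{TA}=0$ together with a Duhamel estimate on the \emph{fixed} interval $[0,T]$, and then to iterate by the semigroup property. Since $B\in\mathcal L(H)$, the operator $A+\epsilon B$ generates a strongly continuous semigroup $(e^{t(A+\epsilon B)})_{t\ge 0}$ (bounded perturbation theorem), and one may assume $B\ne 0$ (otherwise the claim is trivial). Set $K:=\sup_{0\le t\le T}\|e^{tA}\|_{\mathcal L(H)}$, which is finite by strong continuity on $[0,T]$ and satisfies $K\ge 1$. The variation of constants formula reads, for $x\in H$,
\[
 e^{t(A+\epsilon B)}x=e^{tA}x+\epsilon\int_0^t e^{(t-s)A}\,B\,e^{s(A+\epsilon B)}x\,ds .
\]
Taking norms and invoking Gronwall's lemma on $[0,T]$ gives $\|e^{t(A+\epsilon B)}\|_{\mathcal L(H)}\le K e^{\epsilon\|B\|Kt}$, hence $\|e^{t(A+\epsilon B)}\|_{\mathcal L(H)}\le 2K$ for all $t\in[0,T]$, as soon as $\epsilon\le\epsilon_1:=(\ln 2)/(\|B\|KT)$.

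Next I would evaluate the Duhamel identity at $t=T$: since $e^{TA}=0$, only the integral term survives, and the bound just obtained yields
\[
 \|e^{T(A+\epsilon B)}\|_{\mathcal L(H)}\le \epsilon\,\|B\|\,K\cdot 2K\cdot T=C_1\,\epsilon,\qquad C_1:=2K^2\|B\|T .
\]
After shrinking the threshold so that $\epsilon_0\le\epsilon_1$ and $C_1\epsilon_0<1$, the operator $e^{T(A+\epsilon B)}$ is thus a strict contraction whose norm is of order $\epsilon$; this is where the loss of finite-time stability is quantified.

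Finally, for $t\ge 0$ I write $t=nT+r$ with $n=\lfloor t/T\rfloor\in\N$ and $r\in[0,T)$, so that the semigroup property together with the two previous bounds gives
\[
 \|e^{t(A+\epsilon B)}\|_{\mathcal L(H)}=\bigl\|e^{r(A+\epsilon B)}\bigl(e^{T(A+\epsilon B)}\bigr)^{n}\bigr\|_{\mathcal L(H)}\le 2K\,(C_1\epsilon)^{n}\le 2K .
\]
To convert $(C_1\epsilon)^n$ into exponential decay, write $(C_1\epsilon)^n=e^{-n\ln((C_1\epsilon)^{-1})}$ and note that $\ln((C_1\epsilon)^{-1})=\ln\epsilon^{-1}-\ln C_1\ge\tfrac12\ln\epsilon^{-1}>0$ after a further reduction of $\epsilon_0$. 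Using $n\ge t/T-1$ this leads to $(C_1\epsilon)^n\le \epsilon^{-1/2}e^{-(C\ln\epsilon^{-1})t}\le\epsilon^{-1}e^{-(C\ln\epsilon^{-1})t}$ with $C:=1/(2T)$, and combining this with the uniform bound $\le 2K$ gives exactly \eqref{D1} with $M=2K$.

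The argument hides no deep difficulty; the two points that must be handled carefully are the uniform-in-$\epsilon$ control of $e^{t(A+\epsilon B)}$ on the compact interval $[0,T]$ (the Gronwall step, which requires $\epsilon$ small) and the elementary but slightly delicate bookkeeping that turns the contraction factor $C_1\epsilon\to 0$ over the fixed horizon $T$ into the advertised decay rate proportional to $\ln\epsilon^{-1}$, all while keeping the constants $\epsilon_0$, $M$, $C$ independent of $\epsilon$ and $t$.
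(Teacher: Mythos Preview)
Your argument is correct, but it takes a genuinely different route from the paper's proof. The paper obtains the solution of the Cauchy problem as the fixed point of the Duhamel map $\Gamma$ acting on the weighted space $E=\{u\in C^0(\R^+,H):\ \sup_{t\ge 0}\|e^{\lambda t}u(t)\|_H<\infty\}$ with $\lambda=C\ln\epsilon^{-1}$; the key computation is that the contraction constant of $\Gamma$ is of order $\epsilon e^{\lambda T}/\lambda$, which is less than $1$ precisely when $C<1/T$ and $\epsilon$ is small. You instead bound $\|e^{t(A+\epsilon B)}\|$ uniformly on the single window $[0,T]$ via Gronwall, use the nilpotency at $t=T$ to extract the one-step contraction $\|e^{T(A+\epsilon B)}\|\le C_1\epsilon$, and then iterate by the semigroup property, converting $(C_1\epsilon)^{\lfloor t/T\rfloor}$ into an exponential rate. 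Your approach is more elementary---no fixed-point theorem is needed, only Gronwall and the group law---and it makes the mechanism (a strict contraction over each horizon $T$) very transparent; the paper's weighted-space argument is slightly more indirect but packages the decay and the uniform bound in one step and yields any $C<1/T$ directly, whereas your computation as written gives $C=1/(2T)$ (though replacing the factor $\tfrac12$ by any $\theta\in(0,1)$ in your last paragraph would recover the same range). Both proofs are equally rigorous for the statement as formulated.
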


The (simple) proof of Theorem \ref{thm1} is given in Appendix. It rests on the observation that the solution 
of a Cauchy problem can be obtained as a fixed-point of a map, derived from Duhamel formula, in a weighted space.
The weight is related to the decay rate. 
 It is unclear whether such an approach could be extended to quasilinear systems.  
 
 It should be noticed that the estimate in \eqref{D1} is essentially sharp. Indeed, for the system 
 \begin{equation}
 \left\{
 \begin{array}{ll}
 \partial _t u + c\partial _x u = \epsilon v ,\quad &x\in (0,L),\ t>0 ,   \\
 \partial _t v - c\partial _x v = \epsilon u , \quad &x\in (0,L), \ t>0, \\
 u(t,0)=v(t,L)=0,\quad & t>0, \\
 u(0,x)=u_0(x), \quad v(0,x)=v_0(x), \quad &x\in (0,L), 
 \end{array}
 \right. 
 \end{equation} 
 we shall prove that the decay rate is roughly speaking bounded from below by $(c/L)\ln \epsilon ^{-1}$. 
  
  \begin{theo}
 \label{thm2}
 Let $c>0$, let $A(u,v):=(-c\partial _x u, c\partial _x v) $ be the operator with domain 
 \[
 D(A):=\{ (u,v) \in [ H^1(0,L) ]^2; \ u(0)=v(L)=0 \} \subset H := [L^2(0,L)]^2, 
 \]
 and let $B(u,v) := (v,u)$. 
 Then for any $\kappa >c/L$, there exist some numbers  $K,\epsilon _0>0$ such that for any $\epsilon \in (0,\epsilon _0)$, 
 it holds 
 \begin{equation}
 \label{DD1}
 \Vert e^{ t (A+\epsilon B) } \Vert _{ {\mathcal L} (H) }
 \ge  K e^{- (\kappa \ln \epsilon ^{-1})t  } \quad  \forall t\ge 0.  
 \end{equation}
 \end{theo}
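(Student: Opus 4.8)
The plan is to construct an explicit family of solutions (or an explicit lower bound on one matrix element of the semigroup) that decays no faster than the claimed rate. The natural object to track is the ``diagonal'' component of the solution that is not directly killed by the boundary conditions. First I would write the perturbed system in Riemann-invariant form and observe that, when $\epsilon = 0$, a datum supported near $x=0$ in the $u$-component travels to the right, exits at $x=L$ after time $L/c$, and the solution vanishes identically for $t \ge L/c$ (this is the finite-time stability underlying $e^{TA}=0$ with $T=L/c$). The $\epsilon B$ term couples $u$ and $v$, so that a rightward-moving $u$-pulse continuously feeds a leftward-moving $v$-component, which in turn feeds back into $u$; this recycling is what prevents the solution from vanishing and produces a nonzero, slowly decaying tail.

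The key step is to make this feedback quantitative. I would pick a fixed smooth initial datum, say $u_0$ a bump supported in $(0, \eta)$ and $v_0 = 0$, and estimate $\|e^{t(A+\epsilon B)}(u_0,v_0)\|_H$ from below by iterating Duhamel's formula along characteristics. Each ``round trip'' across $[0,L]$ takes time $2L/c$ and, by the Duhamel expansion, contributes a factor proportional to $\epsilon L/c$ (each reflection off the coupling term costs one power of $\epsilon$ and one integration over an interval of length $\sim L/c$). Hence after $n$ round trips, i.e. at time $t \approx 2nL/c$, the surviving amplitude is of order $(\epsilon \, \mathrm{const})^{n}$, which is $\exp\big(-n \ln(\epsilon^{-1}) + O(n)\big) = \exp\big(-\tfrac{c}{2L}\ln(\epsilon^{-1}) \, t (1+o(1))\big)$. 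Being slightly careful, one can arrange the constant in the exponent to be any number strictly larger than $c/L$ once $\epsilon$ is small enough, which is exactly the statement with rate $\kappa > c/L$. A clean way to package the bookkeeping is to test against a suitable adjoint state, or to track a single scalar quantity such as $\int_0^L u(t,x)\,\psi(x)\,dx$ for a well-chosen weight $\psi$, and derive a differential or difference inequality for it that is saturated (up to constants) by the characteristic transport; the lower bound on the operator norm then follows since the initial datum has bounded $H$-norm.

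An alternative, and perhaps more transparent, route is spectral: locate an eigenvalue $\lambda_\epsilon$ of $A+\epsilon B$ with $\mathrm{Re}\,\lambda_\epsilon \ge -\kappa \ln(\epsilon^{-1})$. For $\epsilon = 0$ the operator $A$ has empty spectrum (since $e^{TA}=0$ forces the resolvent to be entire), but a bounded perturbation of size $\epsilon$ can create eigenvalues, and one expects $\mathrm{Re}\,\lambda_\epsilon \to -\infty$ like $\ln\epsilon$ rather than like $\epsilon$. Concretely one solves the ODE eigenvalue problem $-c\varphi' = \lambda \varphi + \epsilon \psi$, $c\psi' = \lambda\psi + \epsilon\varphi$ on $(0,L)$ with $\varphi(0)=\psi(L)=0$; this is a $2\times 2$ constant-coefficient linear system whose characteristic equation, together with the boundary conditions, yields a transcendental equation for $\lambda$ of the form (schematically) $e^{2\lambda L/c} = $ (something of size $\epsilon^2$), whose solutions satisfy $\mathrm{Re}\,\lambda = \tfrac{c}{L}\ln\epsilon + O(1)$. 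Existence of such a $\lambda$ with the desired real part, plus the general lower bound $\|e^{t(A+\epsilon B)}\| \ge |e^{\lambda_\epsilon t}|$ valid whenever $\lambda_\epsilon$ is an eigenvalue (here with an honest $H^1$ eigenfunction of controlled norm), gives \eqref{DD1} directly.

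The main obstacle I anticipate is the uniformity and the sign/constant tracking in the $\epsilon$-dependence: one must ensure that the constants $K$ and $\epsilon_0$ can be chosen independently of $t$, and that the leading term in the exponent is genuinely $\tfrac{c}{L}\ln\epsilon^{-1}$ and not, say, $\tfrac{c}{2L}\ln\epsilon^{-1}$ or something worse, which requires being honest about whether the relevant recycling period is $L/c$ or $2L/c$ and how many powers of $\epsilon$ accrue per period. The spectral approach localizes this difficulty into a single transcendental equation whose roots can be analyzed by Rouché's theorem or by an explicit asymptotic expansion, so I would lean on that; the characteristic/Duhamel approach is more elementary but spreads the same estimate over an induction that must be controlled uniformly in $n$. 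Either way, once the leading asymptotics of the critical eigenvalue (or of the $n$-step amplitude) are pinned down, the passage from ``rate $\tfrac{c}{L}\ln\epsilon^{-1} + O(1)$'' to ``any $\kappa > c/L$ works for $\epsilon$ small'' is routine.
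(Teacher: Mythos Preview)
Your spectral alternative is exactly the route the paper takes: one writes the eigenvalue problem for $A+\epsilon B$ as a $2\times 2$ constant-coefficient ODE system, reduces via the boundary conditions $u(0)=v(L)=0$ to a transcendental equation (the paper obtains $\sinh(\alpha L)/(\alpha L)=c/(L\epsilon)$ with $\lambda=-c\alpha+o(\epsilon)$), and reads off a \emph{real} eigenvalue $\lambda_\epsilon=-\tfrac{c}{L}\ln\epsilon^{-1}+O(\ln\ln\epsilon^{-1})$, from which \eqref{DD1} follows for any $\kappa>c/L$. Your sketch of this is accurate, including the form $e^{2\lambda L/c}\sim\epsilon^2$ of the characteristic relation.

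A word on your first (Duhamel/characteristic) approach: it can be made to work, but the heuristic you give lands on the wrong constant. You count one power of $\epsilon$ per round trip of length $2L/c$, which yields rate $\tfrac{c}{2L}\ln\epsilon^{-1}$; the correct picture is that each \emph{single} crossing of length $L/c$ costs one power of $\epsilon$ (the $u$-pulse exits at $x=L$ after time $L/c$ and what survives is the $O(\epsilon)$ amount of $v$ it seeded, which then crosses leftward and seeds $O(\epsilon^2)$ of $u$, etc.), giving rate $\tfrac{c}{L}\ln\epsilon^{-1}$ as required. You flag exactly this ambiguity as the main obstacle, so this is not a gap so much as a point where the spectral computation is simply cleaner---which is why the paper (and you, in your second paragraph) prefer it.
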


The main aim of the paper is to investigate the application of transparent boundary conditions, or more generally of dynamical boundary conditions as in \cite{PR1,PR2}, to  $2\times 2$ quasilinear hyperbolic systems 
in diagonal form with {\em small} source terms
\begin{eqnarray}
\partial _t u + \lambda (u,v)\partial _x u &=&\epsilon f(u,v), \label{i4}\\  
\partial _t v - \mu (u,v)\partial _x v &=&\epsilon g(u,v) \label{i5} 
\end{eqnarray}
where $\lambda (u,v)>c$, $\mu (u,v) >c$ for some constant $c>0$, and $0\ <\epsilon\ll 1$.
Our results are stated when $f$ and $g$ do not depend on $\epsilon$, but there are still valid when $f$ and $g$ depend on $\epsilon$ but are 
 bounded in $W^{2,\infty}(0,L)$ for $0<\epsilon \ll 1$.  

We shall prove that for $\epsilon$ small enough and for initial data sufficiently close to a steady state of \eqref{i4}-\eqref{i5}, the solution 
of \eqref{i4}-\eqref{i5} with dynamical boundary conditions converge exponentially  to the steady state with a decay rate proportional to 
$\ln (\epsilon ^{-1})$.  

Our result can be applied to e.g. the {\em Saint-Venant system} with 
sources terms (see e.g. \cite{BMCPV,GL}), which is commonly used as a model for the water flow regulation in a canal with 
a slowly varying topography and some damping:
\begin{eqnarray}
\partial _t H + \partial _x (HV)&=&0, \label{i6}\\
\partial _t V + \partial _x (\frac{V^2}{2} + g H) &=&  - g \partial _x b -\frac{c_f}{2} \frac{V^2}{H}.  \label{i7}
\end{eqnarray}

In \eqref{i6}-\eqref{i7}, $t$ is time, $x$ is the space variable, $H=H(t,x)$ is the water depth, $V=V(t,x)$ is the flow velocity 
in the direction parallel to the bottom, $g$ is the gravitation constant, $c_f$ is the friction coefficient, and $z=b(x)$ is the equation of the bottom. 

Using the Riemann invariants 
\[
u:=V+2\sqrt{gH}, \quad v := V - 2\sqrt{gH}, 
\]
we easily see that system \eqref{i6}-\eqref{i7} can be rewritten as 
\begin{eqnarray}
\partial _t u + \lambda (u,v) \partial _x u &=&F(u,v), \label{i16}\\
\partial _t v -\mu (u,v) \partial _x v&=&F(u,v)  \label{i17}
\end{eqnarray}
where
\begin{eqnarray*}
&&\lambda  := V + \sqrt{gH} =  \frac{1}{4}(3u+v),\\
&&\mu  :=  -V + \sqrt{gH} = -\frac{1}{4} (u+3v), \\
&&F (u,v) := - g \partial _x b -2c_f g \left( \frac{u+v}{u-v} \right) ^2. 
\end{eqnarray*}
Thus, our results can be applied when $\Vert \partial _x  b\Vert _{L^\infty (0,L) }   + c_f \ll 1$. 

The above model is valid when the function $\partial _x b$ takes ``small  values''.  A more accurate model, the so-called {\em Savage-Hutter system} (see \cite{BMCPV}), reads
\begin{eqnarray}
\partial _t H + \partial _X (HV)&=&0, \label{I26}\\
\partial _t V + \partial _X (\frac{V^2}{2} + g \cos (\theta) H) &=& -g \sin ( \theta )   \cdot \label{i27}
\end{eqnarray}
Here, $X$ denotes a curvilinear coordinate along the bottom, $\theta =\theta (X)$ is the angle
of the bottom tangent with some fixed
horizontal axis,   $H=H(t,X)$ is the width of fluid in the normal direction at a point $X$ of the bottom, and $V(t,X)$ is the tangential velocity. 

Introducing the Riemann invariants
\begin{eqnarray*}
u & :=& V + 2\sqrt{g\cos (\theta )H}, \\
v &:=&  V - 2\sqrt{g\cos (\theta )H}, 
\end{eqnarray*}
we derive again a system of the form \eqref{i16}-\eqref{i17}, with $x=X$ and
\begin{eqnarray*}  
\lambda&:=& V + \sqrt{g\cos (\theta )H}  =\frac{1}{4} (3u+v),     \\
\mu       &:=& -V + \sqrt{g\cos (\theta )H}=-\frac{1}{4} (u+3v) ,        \\
F(u,v)    &:=& - g\sin (\theta ).
\end{eqnarray*}
Again, our results can be applied when $|\theta| \ll 1$. 

The paper is outlined as follows. The main result (Theorem \ref{theo:Principal}) is stated in Section 2. Its proof is displayed in Section 3.
It is divided in three parts. The first one is a rephrasing 
of the problem. The second part establishes the existence and uniqueness of global
solutions for small initial data using Schauder's fixed-point theorem. The last one introduces some Lyapunov functions 
with exponential weights needed to prove the exponential convergence towards the steady state.
The paper ends with an Appendix which contains the proofs of Theorem \ref{thm1} and of Theorem \ref{thm2} and which provides some background about linear transport equations.

\section{Stationary states and Main result}
\label{sec:Stationnaires}

We are interested in the following system of balance laws
\begin{equation}\label{eq:sysOriginal}
    \begin{cases}
        \partial_t u_\epsilon + \lambda(u_\epsilon, v_\epsilon) \partial_x u_\epsilon=\epsilon f(u_\epsilon, v_\epsilon)\\
        \partial_t v_\epsilon - \mu(u_\epsilon, v_\epsilon) \partial_x v_\epsilon=\epsilon g(u_\epsilon, v_\epsilon)
    \end{cases}
    t>0,\quad x\in (0,L).
\end{equation}

System \eqref{eq:sysOriginal} is supplemented with the {\em initial conditions}
\begin{equation}
u_\epsilon (0,x)=u_0(x), \quad v_\epsilon (0,x)=v_0(x)
\end{equation}
and the {\em boundary conditions} 
\begin{equation}\label{eq:conditionsBord}
            u_\epsilon (t,0)=y_l(t),\qquad v_\epsilon (t,L)=y_r(t).
\end{equation}
In \eqref{eq:conditionsBord}, the boundary data $y_l$ and $y_r$ are defined as the solutions of the 
initial value problem
\begin{equation}\label{eq:feedback}
    \begin{cases}
  \displaystyle  \frac{\mathrm{d}y_l}{\mathrm{d}t}=-K\displaystyle \frac{y_l-\bu}{|y_l-\bu|^\gamma}, \\[5mm]
   \displaystyle  \frac{\mathrm{d}y_r}{\mathrm{d}t}=-K\displaystyle \frac{y_r-\bv}{|y_r-\bv|^\gamma} ,\\
y_l(0) =u_0(0), \ \ y_r(0)=v_0(1), \end{cases}
\end{equation}
where $\gamma \in (0,1)$ and $K\in (0, + \infty)$ are any given numbers. 

Here and in what follows, we  assume that  the functions $\lambda,\ \mu,\ f$ and $g$ are of class  $\Cc^2$.

    We fix a pair  $(\bu,\bv)\in\R^2$ such that 
        \begin{equation}\label{eq:hypVitessePonctuelle}
\lambda(\bu,\bv)>0\qquad \mu(\bu,\bv)>0,
\end{equation}
and we introduce two real numbers $c>0$ and $R>0$ such that 
        \begin{equation}\label{eq:hypVitesse}
            \forall (u,v)\in \R^2,\qquad ||(u,v)-(\bu,\bv)||\leq 2R \Rightarrow \inf (\lambda(u,v),\mu(u,v))\geq c.
        \end{equation}


Let us construct for each $0<\epsilon \ll 1$ a pair $(\bu _\epsilon , \bv _\epsilon)$  of stationary states 
for \eqref{eq:sysOriginal}. 

    We denote by  $X$ the Banach space  of continuous functions from $[0,L]$ to $\R^2$ equipped with the
        uniform norm $\Vert (u,v) \Vert =\sup_{x\in [0,L]} || (u (x), v(x) ) ||$, and 
     by  $\Omega$ the subset of  $X$ consisting of the functions taking
       their  values in the open ball  $B((\bu,\bv),R)$; that is,
       \[
       \Omega = \{ (u,v) \in X; \ \Vert ( u(x) - \bar u, v(x) - \bar v) \Vert <R \quad \forall x\in [0,L]\}. 
       \] 
        
   We can now define a functional $F:(u,v,\epsilon)\in \Omega \times \R \to F(u,v, \epsilon )=(U,V) \in X$
       by
       \begin{eqnarray}
        U(x) &:=& u(x)-\bu-\epsilon \int_0^x{\frac{f(u(y),v(y))}{\lambda(u(y),v(y))}dy}, \label{P1}\\
        V(x)& :=& v(x)-\bv-\epsilon \int_x^L{\frac{g(u(y),v(y))}{\mu(u(y),v(y))}dy}. \label{P2}
\end{eqnarray}
    It is easy to see that 
        \begin{enumerate}
            \item[(i)] the set $\Omega$ is open;
            \item[(ii)] $F(\bu,\bv,0)=0$;
            \item[(iii)] the functional $F$ is of class $\Cc^1$ on $\Omega$;
            \item[(iv)] the differential with respect to $(u,v)$  of $F$ at $(\bar u, \bar v, 0)$  is given by 
            $\mathrm{D}_{(u,v)} F (\bu,\bv,0)=\mathrm{Id}_X$.
        \end{enumerate}

We infer from the implicit function theorem the local existence and uniqueness of a map
        $\epsilon \in [-\epsilon _0, \epsilon _0] \to (\bu_\epsilon,\bv_\epsilon)\in \Omega$ which is of class $\Cc^1$ and which satisfies 
\begin{equation}
\label{P3}
\forall \epsilon \in[-\epsilon_0,\epsilon_0],\qquad F(\bu_\epsilon,\bv_\epsilon)=0.
\end{equation}
    It follows then from   \eqref{P1}-\eqref{P2} that the  functions  $\bu_\epsilon$ and  $\bv_\epsilon$ are of class $\Cc^1$ in $[0,L]$
      and that they  satisfy
        \begin{equation}\label{eq:sysStationnaire}
            \begin{cases}
                \lambda(\bu_\epsilon,\bv_\epsilon)\partial_x \bu_\epsilon =\epsilon f(\bu_\epsilon, \bv_\epsilon), \quad \forall x\in [0,L],\\
                -\mu(\bu_\epsilon,\bv_\epsilon)\partial_x \bv_\epsilon =\epsilon g(\bu_\epsilon, \bv_\epsilon),\quad \forall x\in [0,L],\\
                \bu_\epsilon(0)=\bu,\\
                \bv_\epsilon(L)=\bv.
            \end{cases}
        \end{equation}

   We are now in a position to state the main result in this paper. 
\begin{theo}
\label{theo:Principal}
    There exist $\epsilon_0>0$ and $\delta>0$ such that for any $\epsilon \in [0,\epsilon_0]$,
     any $(\gamma , K) \in (0,1)\times (0,+\infty ) $,
     and any initial data $(u_0,v_0)\in \textrm{Lip} ( [0,L] )^2$, 
     with
    \begin{equation}
    \label{Q1}
        ||u_0-\bu||_{W^{1,\infty} (0,L)} \leq \delta \quad \textrm{ and } \quad  
        ||v_0-\bv||_{W^{1,\infty} (0,L)} \leq \delta,
  \end{equation}
    the system \eqref{eq:sysOriginal}-\eqref{eq:feedback} has a unique solution
    $(u_\epsilon, v_\epsilon)\in \Lip([0,+\infty)\times [0,L])$ satisfying \eqref{eq:sysOriginal}
    almost everywhere, and it holds
    \begin{eqnarray}
\forall t\geq 0,&& ||(u_\epsilon-\bu_\epsilon,
v_\epsilon-\bv_\epsilon)(t)||^2_{L^2 (0,L)} \leq
M  \inf ( 1, \frac{   e^{- C_\epsilon t }}  {\epsilon ^{1+\kappa}})
     \,    ||(u_0-\bu_\epsilon,v_0-\bu_\epsilon)||^2_{L^\infty (0,L)},\qquad\quad
      \label{eq:decroissanceL2}  \\
\forall t\geq 0,&& ||(u_\epsilon-\bu_\epsilon,
v_\epsilon-\bv_\epsilon)(t)||_{L^\infty(0,L)} \leq
M  \inf (1, \frac{   e^{- \frac{1}{3} C_\epsilon   t } } {\epsilon ^\frac{1+\kappa}{3}} )
    \,     ||(u_0-\bu_\epsilon,v_0-\bu_\epsilon)||^\frac{2}{3}_{L^\infty (0,L)},\qquad\quad 
 \label{eq:decroissance}
\end{eqnarray}
 where $\kappa := (c\delta ^\gamma)/(KL\gamma)$,  $M=M(\delta )>0$ and 
   $C_\epsilon=C_\epsilon (\delta) \sim   -\frac{c}{L}\ln(\epsilon)$ as  $\epsilon \to 0^+$.
\end{theo}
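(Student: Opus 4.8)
The proof follows the three-part scheme announced in the introduction. \emph{Step 1 (reduction around the steady state).} First I would integrate the closed-loop boundary ODEs in \eqref{eq:feedback}: since $\gamma\in(0,1)$ one has $|y_l(t)-\bu|^\gamma=\max\big(|u_0(0)-\bu|^\gamma-K\gamma t,0\big)$, so $|y_l(t)-\bu|$ is nonincreasing and $y_l(t)=\bu$ for $t\ge T_l:=|u_0(0)-\bu|^\gamma/(K\gamma)$, and likewise $y_r(t)=\bv$ for $t\ge T_r:=|v_0(L)-\bv|^\gamma/(K\gamma)$; set $T:=\max(T_l,T_r)\le\delta^\gamma/(K\gamma)$. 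Next, with $a_\epsilon:=u_\epsilon-\bu_\epsilon$ and $b_\epsilon:=v_\epsilon-\bv_\epsilon$, subtracting \eqref{eq:sysStationnaire} from \eqref{eq:sysOriginal} gives a quasilinear diagonal system
\[
\partial_t a_\epsilon+\lambda(u_\epsilon,v_\epsilon)\,\partial_x a_\epsilon=\epsilon\,r_1,\qquad
\partial_t b_\epsilon-\mu(u_\epsilon,v_\epsilon)\,\partial_x b_\epsilon=\epsilon\,r_2,
\]
where, because $\partial_x\bu_\epsilon,\partial_x\bv_\epsilon=O(\epsilon)$ by \eqref{eq:sysStationnaire} and $\lambda,\mu,f,g\in\Cc^2$, the remainders satisfy $|r_i|\le C(|a_\epsilon|+|b_\epsilon|)$ while $(a_\epsilon,b_\epsilon)$ stays in the ball of radius $2R$ about $0$ (on which $\lambda,\mu\ge c$ by \eqref{eq:hypVitesse}); the boundary conditions become $a_\epsilon(t,0)=y_l(t)-\bu$, $b_\epsilon(t,L)=y_r(t)-\bv$, which vanish for $t\ge T$ and are pointwise $\le\|(u_0-\bu_\epsilon,v_0-\bv_\epsilon)\|_{L^\infty}$, while the initial datum $(u_0-\bu_\epsilon,v_0-\bv_\epsilon)$ has $W^{1,\infty}$ norm $\le C\delta$.

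\emph{Step 2 (global well-posedness).} For $\delta,\epsilon_0$ small I would construct the unique Lipschitz solution on $[0,+\infty)\times[0,L]$ by the method of characteristics and Schauder's theorem, as in \cite{PR1,PR2}: given a candidate $(a,b)$ in a closed, bounded, convex subset of a space of Lipschitz functions, freeze the transport speeds and the source terms, integrate the two resulting linear transport equations (see the Appendix) with the boundary data of Step 1, and let $(\hat a,\hat b)$ be the image; this map is continuous and compact (Arzel\`a--Ascoli, the characteristics having slopes bounded away from $0$ by $c$), and the smallness of the data, the smallness of $\epsilon$, and the dissipative form of the dynamical boundary conditions make it map the set into itself, the $L^\infty$ norm staying $<2R$ and, crucially, the Lipschitz norm staying bounded \emph{uniformly in time}. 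Uniqueness follows from a Gronwall estimate on the difference of two solutions along characteristics. I would retain from this step two facts: the elementary bound $\|(a_\epsilon,b_\epsilon)(t)\|_{L^\infty}\le Ce^{C\epsilon t}\|(u_0-\bu_\epsilon,v_0-\bv_\epsilon)\|_{L^\infty}$, and the uniform Lipschitz bound, which in particular gives $C_1:=\sup_{t\ge0}\|\partial_x\big(\lambda(u_\epsilon,v_\epsilon),\mu(u_\epsilon,v_\epsilon)\big)(t)\|_{L^\infty}<\infty$ (depending on $\delta$ only).

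\emph{Step 3 (weighted Lyapunov estimate and interpolation).} Now fix $\nu=\nu(\epsilon):=\frac1L\ln\epsilon^{-1}$, so that $e^{\nu L}=\epsilon^{-1}$, and set $\mathcal E(t):=\int_0^L\big(e^{-\nu x}a_\epsilon(t,x)^2+e^{\nu(x-L)}b_\epsilon(t,x)^2\big)\,dx$. Differentiating in $t$, using the system of Step 1 and integrating the transport terms by parts, the interior contributions are $\le-(c\nu-C_1)\mathcal E(t)$; the boundary terms at $x=L$ for $a_\epsilon$ and at $x=0$ for $b_\epsilon$ have the good sign and are dropped; the remaining boundary terms equal $\lambda(u_\epsilon,v_\epsilon)(t,0)(y_l(t)-\bu)^2+\mu(u_\epsilon,v_\epsilon)(t,L)(y_r(t)-\bv)^2$; and the source terms, using $|a_\epsilon r_1|\le C(a_\epsilon^2+b_\epsilon^2)$ (and symmetrically for $b_\epsilon r_2$) together with $e^{-\nu x}\le e^{\nu L}e^{\nu(x-L)}$, $e^{\nu(x-L)}\le e^{\nu L}e^{-\nu x}$ and $e^{\nu L}=\epsilon^{-1}$, are $\le C\mathcal E(t)$ for $\epsilon_0$ small. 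Hence $\mathcal E'(t)\le-C_\epsilon\mathcal E(t)+C\big((y_l(t)-\bu)^2+(y_r(t)-\bv)^2\big)$ with $C_\epsilon:=c\nu-C_1-C\sim-\tfrac cL\ln\epsilon$. The last bracket vanishes for $t\ge T$ and has integral over $[0,T]$ bounded by $C\|(u_0-\bu_\epsilon,v_0-\bv_\epsilon)\|_{L^\infty}^2$ (by the explicit form of $y_l,y_r$), and $\mathcal E(0)\le\tfrac2\nu\|(u_0-\bu_\epsilon,v_0-\bv_\epsilon)\|_{L^\infty}^2$, so Gronwall yields $\mathcal E(t)\le Ce^{C_\epsilon T}e^{-C_\epsilon t}\|(u_0-\bu_\epsilon,v_0-\bv_\epsilon)\|_{L^\infty}^2$ for all $t\ge0$, with $e^{C_\epsilon T}\le\epsilon^{-\kappa}$ since $T\le\delta^\gamma/(K\gamma)$ and $C_\epsilon\le c\nu=\tfrac cL\ln\epsilon^{-1}$, i.e. $C_\epsilon T\le\kappa\ln\epsilon^{-1}$. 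As $\mathcal E(t)\ge e^{-\nu L}\|(a_\epsilon,b_\epsilon)(t)\|_{L^2}^2=\epsilon\|(a_\epsilon,b_\epsilon)(t)\|_{L^2}^2$, this gives the $\epsilon^{-(1+\kappa)}e^{-C_\epsilon t}$ branch of \eqref{eq:decroissanceL2}; the constant branch (the ``$1$'') follows from the elementary estimate of Step 2 over the fixed time interval $[0,(1+\kappa)L/c]$, outside of which $\epsilon^{-(1+\kappa)}e^{-C_\epsilon t}\ge1$. Finally \eqref{eq:decroissance} follows by combining \eqref{eq:decroissanceL2}, the uniform Lipschitz bound of Step 2, and the interpolation inequality $\|w\|_{L^\infty(0,L)}\le C(L)\|w\|_{L^2(0,L)}^{2/3}\big(\|w\|_{L^\infty(0,L)}+\|\partial_x w\|_{L^\infty(0,L)}\big)^{1/3}$ applied to $w=a_\epsilon(t,\cdot)$ and $w=b_\epsilon(t,\cdot)$.

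\emph{Main difficulty.} I expect Step 2 to be the crux: obtaining a \emph{global} Lipschitz solution with a spatial-derivative bound uniform in time for a genuinely quasilinear system --- that is, ruling out gradient blow-up --- requires using simultaneously the smallness of the data, the smallness of $\epsilon$ (so that the source $\epsilon r_i$ cannot spoil the smallness over an infinite horizon), and the dissipativity built into the dynamical boundary conditions, all within one fixed-point argument. Step 3 is by comparison mostly bookkeeping once the weighted energy is known to be dissipative; the one point needing care is the calibration $\nu(\epsilon)=\frac1L\ln\epsilon^{-1}$, forced by the requirement that the loss $e^{\nu L}$ incurred in passing from $\mathcal E$ back to the $L^2$ norm be exactly $\epsilon^{-1}$, which then simultaneously fixes the decay rate $C_\epsilon\sim-\frac cL\ln\epsilon$ and, through $e^{C_\epsilon T}$, the prefactor $\epsilon^{-\kappa}$.
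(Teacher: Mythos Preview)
Your proposal is correct and follows the paper's three-part structure (reduction to $(U,V)=(u_\epsilon-\bu_\epsilon,v_\epsilon-\bv_\epsilon)$; global Lipschitz well-posedness via a Schauder/Tikhonov fixed point on linear transport equations; exponentially weighted $L^2$ Lyapunov functional plus the $L^\infty$--$L^2$--Lip interpolation). The one genuine difference is in Step~3: the paper does \emph{not} treat the boundary contributions $(y_l-\bu)^2+(y_r-\bv)^2$ as a finite-time forcing absorbed through Gronwall, but instead introduces a second Lyapunov function
\[
\tilde{\mathcal L}_\theta(Y_l,Y_r)=\frac{\tilde C}{K(\gamma+2)}\Big(|Y_l|^{\gamma+2}e^{\theta\frac{c}{K\gamma}|Y_l|^\gamma}+|Y_r|^{\gamma+2}e^{\theta\frac{c}{K\gamma}|Y_r|^\gamma}\Big),
\]
tailored so that $\frac{d}{dt}\tilde{\mathcal L}_\theta\le -c\theta\,\tilde{\mathcal L}_\theta-\tilde C(Y_l^2+Y_r^2)$ exactly cancels the boundary leftover in $\frac{d}{dt}\mathcal L_\theta$; the total $\mathcal L=\mathcal L_\theta+\tilde{\mathcal L}_\theta$ then obeys a clean differential inequality, and the $\epsilon^{-\kappa}$ prefactor emerges from $\tilde{\mathcal L}_\theta(Y_l(0),Y_r(0))\le C\epsilon^{-\kappa}\|(u_0-\bu_\epsilon,v_0-\bv_\epsilon)\|_{L^\infty}^2$ (via $e^{\theta\frac{c\delta^\gamma}{K\gamma}}$) rather than from $e^{C_\epsilon T}$. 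The paper also \emph{optimizes} over $\theta$ to obtain the sharpest constant in $C_\epsilon$, whereas you fix $\nu=\frac1L\ln\epsilon^{-1}$ a priori; both choices give the same leading asymptotics $C_\epsilon\sim\frac cL\ln\epsilon^{-1}$. Your Gronwall route is slightly more elementary; the paper's double-Lyapunov route yields a single monotone functional on the full state $(U,V,Y_l,Y_r)$, which is conceptually cleaner and closer in spirit to strict Lyapunov constructions.
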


   We shall use some Lyapunov function  to prove \eqref{eq:decroissanceL2} (see 
   Section \ref{sec:Stabilisation}).\\
\begin{rema} 
 We notice  that for any fixed 
 \begin{equation}
 \label{extinction}
 t> (1+\kappa ) \frac{L}{c} = \frac{L}{c}  + \frac{\delta ^\gamma }{ K\gamma} , 
 \end{equation}
we have that 
\[
\frac{   e^ {- C_\epsilon  t } }{ \epsilon ^{1+\kappa}} = e^{(\ln \frac{1}{\epsilon} ) [1+\kappa -t (\frac{c}{L} +o(1)) ]} 
 \]     
and therefore that the r.h.s. of   \eqref{eq:decroissanceL2} tends to $0$ as $\epsilon \to 0^+$. This result, combined with the boundedness of $\{ (u_\epsilon, v_\epsilon) \} _{0<\epsilon<\epsilon _0}$ in $\textrm{Lip}([0,T]\times [0,L] ) $ for all $T>0$, 
yields again the finite-time stability around $(\bar u, \bar v)$ of the limit system (without source term) 
\[
\left\{ 
\begin{array}{ll}
\partial _t u + \lambda (u,v) \partial  _x u =0, \quad &t>0, \ x\in (0,L), \\
\partial _t v - \mu (u,v) \partial _x v=0, \quad &t>0, \ x\in (0,L),  \\
u(t,0)= y_l(t),\quad &t>0, \\
v(t,1)=y_r(t), \quad &t>0, \\
u(0,x)=u_0(x),\quad  &x\in (0,L), \\ 
v(0,x)=v_0(x)\quad &x\in (0,L). 
\end{array}
\right.
\]
that was established in \cite{PR2} with an extinction time very similar to \eqref{extinction} ($\delta ^\gamma$ in the r.h.s. of \eqref{extinction} being replaced by
${||(u_0-\bu ,v_0-\bv )|| }^\gamma_{L^\infty (0,L)}$).   
\end{rema} 

\section{Proof of Theorem \ref{theo:Principal}}
\subsection{Reduction of the problem}
\label{sec:Reduction}
We aim to show that if \eqref{Q1} holds with $\delta $ small enough, then the solutions of
 \eqref{eq:sysOriginal}-\eqref{eq:feedback} tend to
 $(\bu_\epsilon, \bv_\epsilon)$ as $t\to +\infty$. 

 To this end, we introduce the functions
$$
U:=u_\epsilon -\bar u_\epsilon, \qquad V:=v_\epsilon -\bar v_\epsilon.
$$

The original system \eqref{eq:sysOriginal} can be written

\begin{equation}\label{eq:sysPerturbe}
    \begin{cases}
        \partial_t U+\lambda(\bu_\epsilon+U, \bv_\epsilon+ V)\partial_x U=\epsilon  [ f(\bu_\epsilon +U,\bv_\epsilon+V)-f(\bu_\epsilon,\bv_\epsilon)] \\
       \qquad\qquad\qquad   -   
        \partial_x \bu_\epsilon [\lambda(\bu_\epsilon+ U, \bv_\epsilon+ V)-\lambda(\bu_\epsilon,\bv_\epsilon) ] ,\\
        \partial_t V-\mu(\bu_\epsilon+ U, \bv_\epsilon+ V)\partial_x V=\epsilon [g(\bu_\epsilon + U,\bv_\epsilon+ V)-g(\bu_\epsilon,\bv_\epsilon)] \\
        \qquad\qquad\qquad +
        \partial_x \bv_\epsilon  [\mu(\bu_\epsilon+ U, \bv_\epsilon+V)-\mu(\bu_\epsilon,\bv_\epsilon)] .\\
    \end{cases}
\end{equation}


The boundary conditions become
\begin{equation}\label{eq:bordPerturbe}
    \begin{cases}
     U (t,0)=y_l(t)-\bu,\\
     V (t,L)=y_r(t)-\bv, 
    \end{cases}
    \end{equation}
    where $y_l$ and $y_r$ still solve \eqref{eq:feedback}, and the initial condition read 
    \begin{equation}
    \label{AAA}
    U(0,x)=U_0(x):=u_0(x)-\bar u_\epsilon (x),\quad V(0,x)=V_0(x):= v_0(x)-\bar v_\epsilon (x). 
    \end{equation}
  
Note that $y_l(t)=\bar u$ (resp. $y_r(t)=\bar v$) for $t\ge  (\gamma K)^{-1} |U_0(0)|^\gamma$ (resp.  $t\ge (\gamma K)^{-1} |V_0(1)|^\gamma$). Therefore, 
if $\Vert U_0\Vert _{L^\infty (0,L)}\le \delta$ and $\Vert V_0\Vert _{L^\infty (0,L)}\le \delta$, then we have
\begin{equation}
\label{A10}
U(t,0)=V(t,1)=0 \quad \forall t\ge (\gamma K)^{-1} \delta ^\gamma.
\end{equation}

Thanks to the definition of $\Omega$, we notice that for any $\epsilon\in [-\epsilon_0,\epsilon_0]$, it holds 
$$
\forall x\in [0,L],\qquad ||(\bu_\epsilon(x),\bv_\epsilon(x))-(\bu,\bv)|| 
< R.$$

For any given $r>0$ and any $f:\R ^2\to \R$, let 
\[ \Vert f\Vert _r := \sup\{ \, \vert f(u,v)\vert; \  (u,v) \in B( (\bar u, \bar v); r)  \} .\]

In the following,  for any function $f=f(u,v)$, the quantity  $\Vert f\Vert $ will always denote $\Vert f\Vert _R$ where $R$ is as in \eqref{eq:hypVitesse}. (Note that 
$R$ does not depend on $\epsilon$.) For $h:[0,L]\to \R$, we denote $\Vert h\Vert _{[0,L]}=\sup\{ |h(x)|; \ x\in [0,L]\}$. 

Using \eqref{eq:sysStationnaire}, we see that
\begin{eqnarray*}
||\partial_x \bu_\epsilon||_{[0,L]} &\leq& \epsilon\frac{||f||}{c},\\
||\partial_x \bv_\epsilon||_{[0,L]} &\leq& \epsilon\frac{||g||}{c} \cdot
\end{eqnarray*}

Differentiating in \eqref{eq:sysStationnaire}, we infer that
    \begin{eqnarray*}
    ||\partial^2_{xx} \bu_\epsilon||_{[0,L]} &\leq& \epsilon^2\left(\frac{||f||.||\partial_1
            f||+||g||.||\partial_2 f||}{c^2}+
        \frac{||\partial_1 \lambda||.||f||^2}{c^3}+\frac{||\partial_2 \lambda||.||f||.||g||}{c^3}\right) ,\\
    ||\partial^2_{xx} \bv_\epsilon||_{[0,L]} &\leq& \epsilon^2\left(\frac{||f||.||\partial_1
            g||+||g||.||\partial_2 g||}{c^2}+ \frac{||\partial_2
            \mu||.||g||^2}{c^3}+\frac{||\partial_1 \mu||.||f||.||g||}{c^3}\right).
            \end{eqnarray*}

We now simplify the notations by setting
    \begin{equation}
        \begin{cases}
       \alpha(x,a,b):=\lambda(\bu_\epsilon(x)+a,\bv_\epsilon(x)+b),\\
       \beta(x,a,b):=\mu(\bu_\epsilon(x)+a,\bv_\epsilon(x)+b),\\
       F(x,a,b):=\epsilon [ f(\bu_\epsilon(x)+a,\bv_\epsilon(x)+b)-f(\bu_\epsilon(x),\bv_\epsilon(x)) ] \\
       \qquad\qquad\qquad -\partial_x \bu_\epsilon(x) 
       [ \lambda(\bu_\epsilon(x)+a,\bv_\epsilon(x)+b)-\lambda(\bu_\epsilon(x),\bv_\epsilon(x))]  ,\\
       G(x,a,b):=\epsilon [  g(\bu_\epsilon(x)+a,\bv_\epsilon(x)+b)-g(\bu_\epsilon(x),\bv_\epsilon(x)) ]\\
       \qquad \qquad \qquad + 
       \partial_x \bv_\epsilon(x)
       [ \mu(\bu_\epsilon(x)+a,\bv_\epsilon(x)+b)-\mu(\bu_\epsilon(x),\bv_\epsilon(x))] ,\\
       Y_l(t):=y_l(t)-\bu,\\
       Y_r(t):=y_r(t)-\bv. 
   \end{cases}  
   \end{equation}

The system  \eqref{eq:sysPerturbe} can be written as 
    \begin{equation}\label{eq:sysCanonique}
        \begin{cases}
            \partial_t U + \alpha(x,U,V) \partial_x U=F(x,U,V), \\
            \partial_t V - \beta(x,U,V) \partial_x V=G(x,U,V).
        \end{cases}
    \end{equation}
  It is supplemented with the initial conditions
  \begin{equation}
  \label{BBB}
 U(0,x)=U_0(x), \quad V(0,x)=V_0(x),  
\end{equation} 
and the boundary conditions
\begin{equation}\label{eq:bordCanonique}
    \begin{cases}
        U(t,0)=Y_l(t),\\
    V(t,L)=Y_r(t),
    \end{cases}
\end{equation}
where the functions $Y_l$ and $Y_r$ solve the system
\begin{equation}
\label{CCC}
    \begin{cases}
 \displaystyle     \frac{\mathrm{d}Y_l}{\mathrm{d}t}=-K\frac{Y_l}{|Y_l|^\gamma},\\[3mm]
  \displaystyle   \frac{\mathrm{d}Y_r}{\mathrm{d}t}=-K\frac{Y_r}{|Y_r|^\gamma},\\
    Y_l(0)=u_0(0)-\bar u, \quad Y_r(0)=v_0(1)-\bar v. 
    \end{cases}
\end{equation}

 The functions $\alpha$, $\beta$, $F$ and $G$ enjoy the following properties: 
    \begin{eqnarray} 
    \forall (a,b)\in B((0,0),R), \quad  &&\inf (\alpha(x,a,b),\beta(x,a,b)) \geq c ;  \label{Y1}\\
    \forall x\in [0,L],\quad   &&F(x,0,0)=0=G(x,0,0);\label{Y2} 
    \end{eqnarray}
    \begin{eqnarray*}
     &&\forall x\in[0,L],\ \forall (a,b)\in B((0,0),R) \\
      &&\qquad\qquad \qquad\qquad  \frac{|F(x,a,b)|}{||(a,b)||} \leq \epsilon\big( ||Df||_{B((\bu,\bv),2R)}+\frac{||f||_{B((\bu,\bv),R)}}{c}||D\lambda||_{B((\bu,\bv),2R)}\big) ;\\
       &&\qquad\qquad\qquad\qquad  \frac{|G(x,a,b)|}{||(a,b)||} \leq \epsilon\big( ||Dg||_{B((\bu,\bv),2R)}+\frac{||g||_{B((\bu,\bv),R)}}{c}||D\mu||_{B((\bu,\bv),2R)}\big) ;
\end{eqnarray*}
    \begin{eqnarray}
        &&\forall x\in[0,L],\quad \forall (a,b)\in B((0,0),R) \nonumber\\
        &&
        \begin{cases}
         |\partial_x F(x,a,b)|\leq \frac{2\epsilon^2}{c^2}\left(1+\frac{||\lambda||}{c}\right) \big(c||f||.||\partial_1
        f||+c||g||.||\partial_2 f||+||\partial_1 \lambda||.||f||^2+||\partial_2
        \lambda||.||f||.||g|| \big) ; \\[3mm]
       | \partial_x G (x,a,b)|\leq \frac{2\epsilon^2}{c^2}\left(1+\frac{||\mu||}{c}\right)
        \left(c||\partial_1 g||.||f||+c||\partial_2 g||.||g||+||g||^2.||\partial_2
        \mu||+||g||.||f||.||\partial_1 \mu|| \right)  ; \\[3mm]
\displaystyle     | \partial _a  
         F(x,a,b)|\leq \epsilon \big(||\partial_1 f|| _{2R} + \frac{||\partial_1 \lambda ||_{2R}  .||f||}{c}\big) ; \\[3mm]
\displaystyle     |\partial _b 
        F (x,a,b)|\leq \epsilon \big(||\partial_2 f|| _{2R} +\frac{||\partial_2 \lambda|| _{2R}.||f||}{c}\big) ; \\[3mm]
\displaystyle    |\partial _a  
        G(x,a,b)|\leq \epsilon \big(||\partial_1 g|| _{2R} +\frac{||\partial_1 \mu ||_{2R}  . || g 
         ||} {c}\big) ; \\[3mm]
 \displaystyle   | \partial _b 
        G(x,a,b)|\leq \epsilon \big(||\partial_2 g|| _{2R} +\frac{||   \partial _2    
        \mu||_{2R}  .|| g 
        ||}{c}\big) .
        \end{cases}
         \label{eq:borneGradient}
\end{eqnarray}

\subsection{The fixed-point argument}
\label{sec:PointFixe}
We show in this section the existence and uniqueness of the solution of \eqref{eq:sysPerturbe}-\eqref{AAA}
for $\epsilon$ small enough. Introduce some positive constants $I_0$,
$I_1$, $P$, and $M$ such that
$$||U_0||_{L^\infty(0,L)}\leq I_0,\qquad ||V_0||_{L^\infty(0,L)}\leq I_0,$$
$$||U'_0||_{L^\infty(0,L)}\leq I_1,\qquad ||V'_0||_{L^\infty(0,L)}\leq I_1,$$
$$\sup (||\partial_x F||_\infty,
||\partial _a  
F||_\infty,
|| \partial _b 
F||_\infty,||\partial_x
G||_\infty,||
\partial _a 
G||_\infty,||
\partial _b 
G||  _{\infty} 
)\leq P,$$
and
$$\sup (||\alpha||_\infty,||\partial_x \alpha||_\infty,
||\partial _a 
\alpha||_\infty,
|| \partial _b  
\alpha||_\infty,
||\beta||_\infty,||\partial_x \beta ||_\infty,
|| \partial _a 
\beta||_\infty,
|| \partial _b  
\beta || _{\infty} )\leq M.$$

We notice that we can assume that
\begin{enumerate}
    \item[(i)] the constants $I_0$, $I_1$ are small by picking  the numbers $\delta$ and $\epsilon _0$ in Theorem
        \ref{theo:Principal} small enough; 
    \item[(ii)] the constant $P$ is small by adjusting $\epsilon$ thanks to   \eqref{eq:borneGradient}; 
    \item[(iii)] the constant $M$ is bounded for $\epsilon \in [0,\epsilon _0]$.
\end{enumerate}

To display our fixed-point argument based on Schauder fixed-point theorem, we have to define (i) a class of functions containing the desired solution; (ii) a map whose
fixed-point is the desired solution of \eqref{eq:sysPerturbe}-\eqref{AAA}. This is done in the following two definitions.

\begin{defi}
    Given two positive constants $A$ and $B$, we define $\Dc$ as the set of functions
    $(U,V)\in \Lip( \R ^+ \times [0,L])^2$  such that
    \begin{equation}
        \forall (t,x)\in \R^+\times [0,L],\qquad 
        \begin{cases}
            |U(t,x)|\leq A,\\
            |V(t,x)|\leq A;\\
        \end{cases}
    \end{equation}
    \begin{equation}
        \forall (t,x,y)\in \R^+\times [0,L]^2,\qquad 
        \begin{cases}
            |U(t,x)-U(t,y)|\leq B|x-y|,\\
            |V(t,x)-V(t,y)|\leq B|x-y|;\\
        \end{cases}
    \end{equation}
    and
    \begin{equation}
        \forall (t,s,y)\in \R^+\times \R^+\times [0,L],\qquad 
        \begin{cases}
            |U(t,x)-U(s,x)|\leq B|t-s|,\\
            |V(t,x)-V(s,x)|\leq B|t-s|.\\
        \end{cases}
    \end{equation}
\end{defi}
\begin{defi} \label{def2}
    Given $(U,V)\in \Dc$, we introduce the unique solution  $(\tU,\tV)$  (thanks to Theorem
    \ref{theo:TransportSolution} in Appendix) of the system
    \begin{equation}
        \begin{cases}
            \partial_t \tU+\alpha (x,U,V)\partial_x \tU=F(x,U,V),\\
            \partial_t \tV-\beta(x,U,V)\partial_x \tV=G(x, U, V),\\
            \tU(t,0)=Y_l(t),\\
            \tV(t,L)=Y_r(t),\\
            \tU(0,x)=U_0(x),\\
            \tV(0,x)=V_0(x).
        \end{cases}
    \end{equation}
We set $\Fc (U,V) :=(\tU,\tV)$.
\end{defi}
The following result is classical (see e.g. \cite{R}). 
\begin{prop}
    The function $d: [ \Cc^0([0,+\infty[\times [0,L] )^2  ]^2 \to \R^+$ defined by
    $$
    d((U,V),(u,v)):=\sum_{n=1}^{+\infty}{\frac{1}{2^n}\frac{||U-u||_{L^\infty([0,n]\times[0,L])}+||V-v||_{L^\infty([0,n]\times[0,L])}}{1+||U-u||_{L^\infty([0,n]\times[0,L])}+||V-v||_{L^\infty([0,n]\times[0,L])}}}
    $$
    is a distance and  $ [ \Cc^0([0,+\infty[\times [0,L] )  ] ^2$  endowed with this distance is a Fr\'echet Space. On the other hand, $\Dc$ is a
    convex compact subset of $ [ \Cc^0([0,+\infty[\times [0,L] )  ] ^2$.
\end{prop}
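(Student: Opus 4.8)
The plan is to dispose of the four assertions in turn; all are standard facts about the topology of uniform convergence on an exhausting sequence of compact subsets. Throughout, write $\rho_n((U,V),(u,v)) := \|U-u\|_{L^\infty([0,n]\times[0,L])} + \|V-v\|_{L^\infty([0,n]\times[0,L])}$, so that $d = \sum_{n\ge 1} 2^{-n}\,\phi(\rho_n)$ with $\phi(t) := t/(1+t)$; the series converges since $0\le\phi\le 1$.

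First, that $d$ is a metric. Symmetry and nonnegativity are immediate, and $d((U,V),(u,v))=0$ forces $\rho_n=0$ for every $n$, hence $U\equiv u$ and $V\equiv v$ on each $[0,n]\times[0,L]$, therefore on $[0,+\infty[\times[0,L]$. For the triangle inequality one uses that $\phi$ is nondecreasing and subadditive on $\R^+$ (subadditivity follows from $\phi(s+t)=\frac{s+t}{1+s+t}\le\frac{s}{1+s}+\frac{t}{1+t}$), combined with the elementary inequality $\rho_n((U,V),(u,v))\le\rho_n((U,V),(w,z))+\rho_n((w,z),(u,v))$ valid for each $n$; applying $\phi$ and summing against the weights $2^{-n}$ yields the claim.

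Second, that the induced topology is that of uniform convergence on every $[0,n]\times[0,L]$ and is complete. Indeed $d((U_k,V_k),(U,V))\to 0$ if and only if $\rho_n((U_k,V_k),(U,V))\to 0$ for each $n$, which is precisely locally uniform convergence; and a $d$-Cauchy sequence is $\rho_n$-Cauchy for every $n$, hence converges uniformly on each $[0,n]\times[0,L]$ to a pair of continuous functions, these limits being compatible and defining an element of $[\Cc^0([0,+\infty[\times[0,L])]^2$ to which the sequence $d$-converges. Since the metric is translation invariant and induces the locally convex vector topology generated by the seminorms $\rho_n$, the space is a Fréchet space. For the third point, convexity of $\Dc$: each defining condition — the pointwise bound $|U|\le A$, $|V|\le A$, and the Lipschitz estimates in $x$ and in $t$ with constant $B$ — asserts that a norm or a difference quotient is bounded by a fixed constant, and every such condition is stable under convex combinations of pairs $(U,V)$.

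Fourth, compactness of $\Dc$. As $[\Cc^0([0,+\infty[\times[0,L])]^2$ is metrizable, it suffices to prove sequential compactness. Given $(U_k,V_k)_{k\ge 1}$ in $\Dc$, the functions are uniformly bounded by $A$ and uniformly (hence equi-) continuous on each $[0,n]\times[0,L]$; by the Arzel\`a--Ascoli theorem together with a diagonal extraction over $n$ one obtains a subsequence converging uniformly on every $[0,n]\times[0,L]$, i.e. $d$-converging, to some $(U,V)$, and passing to the limit in the pointwise and Lipschitz inequalities shows $(U,V)\in\Dc$. Thus $\Dc$ is sequentially compact, hence compact (and closed). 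The only step needing any care is this last one, and its single nontrivial ingredient is the diagonal Arzel\`a--Ascoli argument on the exhausting family of compacta $[0,n]\times[0,L]$; the rest is bookkeeping.
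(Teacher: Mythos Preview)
Your argument is correct and complete: you verify the metric axioms via the monotone subadditive function $\phi(t)=t/(1+t)$, identify the topology with locally uniform convergence, check completeness, and obtain compactness of $\Dc$ by Arzel\`a--Ascoli with a diagonal extraction over the exhaustion $[0,n]\times[0,L]$. The paper itself gives no proof, simply declaring the result classical with a reference, so your write-up supplies exactly the standard verification the authors omit.
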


We first have to show that $\Fc$ maps $\Dc$ into itself for an appropriate choice of the constants.  
\begin{prop} For any $A$, $B$, $c$, $K$, $L$, $M$ and $\gamma$ in $(0,+\infty )$, there exist some numbers $I_0,I_1$ and $P$ in $(0,+\infty )$  
 such that  $\Fc (\Dc ) \subset \Dc$. 
\end{prop}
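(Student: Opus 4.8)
The plan is to show that for suitable choices of $I_0$, $I_1$ and $P$ (with $A$, $B$, $c$, $K$, $L$, $M$, $\gamma$ prescribed), the map $\Fc$ sends $\Dc$ into $\Dc$, that is, the solution $(\tU,\tV)$ of the transport system in Definition \ref{def2} satisfies the three defining bounds of $\Dc$: the sup bound by $A$, the spatial Lipschitz bound by $B$, and the temporal Lipschitz bound by $B$. Since $(\tU,\tV)$ solves a \emph{linear} (non-autonomous, non-homogeneous) pair of transport equations with coefficients $\alpha(x,U,V)$, $\beta(x,U,V)$ and source terms $F(x,U,V)$, $G(x,U,V)$ frozen from $(U,V)\in\Dc$, the natural tool is the method of characteristics, which is exactly what Theorem \ref{theo:TransportSolution} in the Appendix furnishes. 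First I would write $\tU$ along the forward characteristic through each point $(t,x)$: tracing back, the characteristic either exits through the initial line $\{t=0\}$, where $\tU=U_0$, or through the boundary $\{x=0\}$, where $\tU=Y_l$; in either case $\tU(t,x)$ equals the corresponding boundary/initial value plus the time-integral of $F(x(s),U(x(s)),V(x(s)))\,ds$ along the characteristic. The same applies to $\tV$ with $Y_r$, $V_0$, $G$, and the characteristic reaching $\{x=L\}$.

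For the sup bound: along any characteristic the transit time in the strip $[0,L]$ is at most $L/c$ because the speeds $\alpha,\beta$ are bounded below by $c$ (property \eqref{Y1}). Hence $|\tU(t,x)|\le \max(\|U_0\|_\infty, \sup_t|Y_l(t)|) + (L/c)\,\|F\|_\infty \le I_0 + (L/c)P$, using $\|U_0\|_\infty\le I_0$ and the fact that $|Y_l(t)|\le|Y_l(0)|=|u_0(0)-\bar u|\le I_0$ (the ODE \eqref{CCC} drives $Y_l$ monotonically to $0$, so $|Y_l|$ is nonincreasing). Choosing $I_0\le A/2$ and then $P\le cA/(2L)$ gives $|\tU|\le A$, and symmetrically for $\tV$. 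For the spatial Lipschitz bound, I would differentiate the characteristic representation in $x$, or equivalently compare the characteristics through two nearby points $(t,x)$ and $(t,y)$: one controls how the exit point and exit time depend Lipschitz-continuously on the starting point (using the Lipschitz dependence of $\alpha$ on $x$ and on $(U,V)$, with $U,V$ themselves $B$-Lipschitz since $(U,V)\in\Dc$), then combines the contribution of $\|U_0'\|_\infty\le I_1$ (or of $Y_l'$, bounded by $K$ from \eqref{CCC} when $|Y_l|\ge$ something, but more carefully one uses that the boundary trace is entered through $t$, so one needs the temporal Lipschitz constant there — this cross-over between the initial-line and boundary-line pieces is where a little care is required) with the $x$-derivative of $\int F$, bounded via $\|\partial_x F\|_\infty\le P$ and $\|\partial_a F\|_\infty,\|\partial_b F\|_\infty\le P$ times the Lipschitz constants of $U,V$. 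This yields $|\tU(t,x)-\tU(t,y)|\le \Phi(I_1,P,M,c,L,B)|x-y|$ for an explicit $\Phi$; one then checks $\Phi\le B$ is achievable by first taking $I_1$ small, then $P$ small (with $M$ already fixed and bounded). The temporal Lipschitz bound follows from the spatial one together with the equation itself: $\partial_t\tU = F - \alpha\,\partial_x\tU$, so $\|\partial_t\tU\|_\infty \le \|F\|_\infty + \|\alpha\|_\infty\,\mathrm{Lip}_x(\tU) \le P + MB$; combined with the boundary condition's own Lipschitz-in-$t$ bound ($|Y_l'|\le K|Y_l|^{-\gamma}$ which is unbounded near the extinction time — so here one must instead use that $Y_l$ reaches $0$ and stays, and that the \emph{only} relevant bound is on the full space-time Lipschitz constant, obtained by taking the max over the initial-data piece, the boundary piece away from extinction, and the trivial zero piece after extinction), one gets $\mathrm{Lip}_t(\tU)\le\Psi(P,M,B,K,\gamma,\delta)$, and again $\Psi\le B$ for $P$ small.

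The order of quantifier choices is the crux: given $A,B,c,K,L,M,\gamma$, first fix $I_0 := \min(A/2, \ldots)$ and $I_1 := \ldots$ small enough that the ``initial-data'' contributions to all three bounds are $\le B/2$ (or $\le A/2$); then, with $I_0,I_1$ frozen, choose $P$ small enough that the ``source-term'' contributions are also $\le B/2$ (or $\le cA/(2L)$). No circularity arises because $M$ is treated as a fixed given constant (item (iii) above guarantees it is bounded uniformly in $\epsilon$), and because $P$ can be made as small as desired independently, by \eqref{eq:borneGradient}, by shrinking $\epsilon$. The main obstacle I anticipate is the bookkeeping in the Lipschitz estimates at the \emph{corner} where characteristics switch from hitting the initial line to hitting the lateral boundary: one must verify that the resulting piecewise representation of $\tU$ is globally Lipschitz with a constant controlled by $\max(I_1, \mathrm{Lip}(Y_l), \ldots)$ rather than their sum blowing up, and that the singularity of $Y_l'$ near the extinction time of the ODE \eqref{CCC} does not spoil the global Lipschitz bound — which it does not, because past that time $Y_l\equiv\bar u$ so $Y_l'=0$, and the blow-up of $|Y_l'|^{-\gamma}\cdot$ is integrable, so the \emph{function} $Y_l$ is globally Lipschitz with constant depending only on $K,\gamma$ and $|Y_l(0)|^{1-\gamma}$. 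Once that is sorted, every remaining estimate is a routine Gronwall-type computation along characteristics.
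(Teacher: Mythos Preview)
Your approach is essentially the paper's: use the characteristics representation (packaged there as Corollary~\ref{coro:estimeeTransport}) to bound $\|\tU\|_\infty$, $\|\partial_x\tU\|_\infty$, $\|\partial_t\tU\|_\infty$ in terms of $I_0,I_1,P,M,A,B$, then observe that for $I_0,I_1$ small and then $P$ small the right-hand sides drop below $A$ and $B$ respectively.

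Two slips to fix. First, $P$ bounds the \emph{gradient} of $F$, not $F$ itself; since $F(x,0,0)=0$ and $|U|,|V|\le A$ on $\Dc$, the mean-value theorem gives $\|F(\cdot,U,V)\|_\infty\le 2PA$, so the sup bound reads $\|\tU\|_\infty\le I_0+(2L/c)PA$ (the paper's \eqref{G1}), not $I_0+(L/c)P$. Second, there is no singularity in $Y_l'$: the ODE \eqref{CCC} gives $|Y_l'|=K|Y_l|^{1-\gamma}$, and since $\gamma\in(0,1)$ this is bounded by $K I_0^{1-\gamma}$ and actually \emph{vanishes} as $Y_l\to 0$. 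Your worry about blow-up near extinction comes from writing the exponent as $-\gamma$ instead of $1-\gamma$; the correct bound $\|Y_l'\|_\infty\le K I_0^{1-\gamma}$ is precisely what enters the paper's Lipschitz estimates \eqref{G2}--\eqref{G3}, and with it the ``corner'' bookkeeping you flag becomes routine (the compatibility $Y_l(0)=U_0(0)$ holds by construction). With these corrections your argument goes through and coincides with the paper's.
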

\begin{proof}
    Using Corollary \ref{coro:estimeeTransport} in Appendix,  we obtain the following estimates: 
    \begin{equation}
        ||\tU||_{L^\infty}\leq \frac{2L}{c}P A +I_0;
    \end{equation}
    \begin{equation}
        ||\tV||_{L^\infty}\leq \frac{2L}{c}P A +I_0;
    \end{equation}
    \begin{equation}
        ||\partial_x \tU||_{L^\infty}\leq
        \frac{L}{c}\exp(\frac{L}{c}M(1+2B))\left(P(1+2B)+\frac{\sup(2PA+KI_0^{1-\gamma}, cI_1)}{L}\right) ;
    \end{equation}
    \begin{equation}
        ||\partial_x \tV||_{L^\infty}\leq
        \frac{L}{c}\exp(\frac{L}{c}M(1+2B))\left(P(1+2B)+\frac{\sup(2PA+KI_0^{1-\gamma}, cI_1)}{L}\right) ;
    \end{equation}
    \begin{equation}
        ||\partial_t \tU||_{L^\infty}\leq
        \frac{ML}{c}\exp(\frac{L}{c}M(1+2B))\left(P(1+2B)+\frac{\sup(2PA+KI_0^{1-\gamma},
                cI_1)}{L}\right)+2PA ;
    \end{equation}
    \begin{equation}
        ||\partial_t \tV||_{L^\infty}\leq
        \frac{ML}{c}\exp(\frac{L}{c}M(1+2B))\left(P(1+2B)+\frac{\sup(2PA+KI_0^{1-\gamma},
                cI_1)}{L}\right)+2PA.
    \end{equation}
    It is thus sufficient to show that for $I_0$ and $I_1$ small enough, one can choose $P$
    sufficiently small so that there exist some numbers  $A>0$ and $B>0$ with
\begin{eqnarray}
\frac{2L}{c}P A +I_0 &\leq&  A, \label{G1}\\
\frac{L}{c}\exp(\frac{L}{c}M(1+2B))\left(P(1+2B)+\frac{\sup(2PA+KI_0^{1-\gamma},
                cI_1)}{L}\right) &\leq&  B,\label{G2}\\
\frac{ML}{c}\exp(\frac{L}{c}M(1+2B))\left(P(1+2B)+\frac{\sup(2PA+KI_0^{1-\gamma},
        cI_1)}{L}\right)+2PA &\leq& B.\label{G3}
        \end{eqnarray}
For given $A,B,M,I_0$ and $I_1$ in $(0,+\infty)$, it is thus sufficient to impose that as $P\to 0^+$, the limit of the leftsided
terms in \eqref{G1}-\eqref{G3} are strictly less than those of the corresponding rightsided terms; that is, 
\begin{eqnarray}
I_0 &<& A, \label{Q11}\\
\frac{L}{c}\exp(\frac{L}{c}M(1+2B))\frac{\sup(K I_0^{1-\gamma}, cI_1)}{L} &<&  B, \label{Q12}\\
 \frac{ML}{c}\exp(\frac{L}{c}M(1+2B))\frac{\sup(KI_0^{1-\gamma},
  cI_1)}{L} &<&  B.\label{Q13}
 \end{eqnarray}
Now, it is clear that given $A$, $B$, $c$, $K$, $L$, $M$ and $\gamma$ in $(0,+\infty )$, 
 the conditions \eqref{Q11}-\eqref{Q13} are fulfilled by choosing $I_0$ and $I_1$ small enough. 
\end{proof}

Next, we have to show the continuity of  the map $\Fc$. 
\begin{prop}
    The map $\Fc: \Dc \to \Dc $ is continuous.
\end{prop}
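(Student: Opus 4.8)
The plan is to use that $\Dc$, being a compact subset of a metric space, is itself a compact metric space, so it is enough to prove \emph{sequential} continuity: given $(U_k,V_k)\to (U,V)$ in $\Dc$, i.e.\ uniform convergence on every slab $[0,T]\times[0,L]$, we must show that $\Fc(U_k,V_k)=(\tU_k,\tV_k)$ converges to $\Fc(U,V)=(\tU,\tV)$ uniformly on every such slab. By the symmetry of the roles of the two components it suffices to treat $\tU_k\to\tU$; fix $T>0$ and work on $Q_T:=[0,T]\times[0,L]$.

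First I would represent $\tU_k$ by the method of characteristics. Let $s\mapsto \xi_k(s;t,x)$ solve $\dot\xi=\alpha(\xi,U_k(s,\xi),V_k(s,\xi))$ with $\xi(t)=x$. Since $\alpha\ge c>0$ by \eqref{Y1}, $\xi_k(\cdot;t,x)$ is strictly increasing in $s$, so followed backward from $s=t$ it decreases in $x$ and exits $Q_T$ either through $\{s=0\}$ at some point $(0,x_k)$ or through the lateral side $\{x=0\}$ at some time $(t_k,0)$; it cannot exit through $\{x=L\}$. Integrating the equation for $\tU_k$ along this curve gives
\be
\tU_k(t,x)=\Phi_k+\int_{\sigma_k}^{t} F\big(\xi_k(s),\,U_k(s,\xi_k(s)),\,V_k(s,\xi_k(s))\big)\,ds,\qquad \xi_k(s):=\xi_k(s;t,x),
\ee
where $(\sigma_k,\Phi_k)=(0,U_0(x_k))$ in the first case and $(\sigma_k,\Phi_k)=(t_k,Y_l(t_k))$ in the second, and an analogous formula (without the index) holds for $\tU$. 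Observe that $\bu_\epsilon(0)=\bu$ by \eqref{eq:sysStationnaire}, so $Y_l(0)=u_0(0)-\bu=U_0(0)$: the datum $\Phi_k$ depends continuously on the exit point even across the dividing characteristic through $(0,0)$, which is precisely what makes the formula consistent there.

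Next I would establish the convergence of the characteristics. The vector fields $b_k(s,y):=\alpha(y,U_k(s,y),V_k(s,y))$ converge to $b(s,y):=\alpha(y,U(s,y),V(s,y))$ uniformly on $Q_T$ (because $\alpha$ is $\Cc^1$ on the relevant ball and $(U_k,V_k)\to (U,V)$ uniformly), and they are Lipschitz in $y$ with a constant $\le M(1+2B)$ independent of $k$, by the $\Cc^1$ bound on $\alpha$ and the common Lipschitz constant $B$ of the members of $\Dc$. A Gronwall estimate then gives $\sup |\xi_k(s;t,x)-\xi(s;t,x)|\to 0$ uniformly in $(s,t,x)$, hence the exit data $(\sigma_k,x_k)$ resp.\ $(\sigma_k,t_k)$ converge to those of $\xi$ (Lipschitz-continuously in fact, since the fields are $\ge c$, so the curves cross $\{x=0\}$ transversally). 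Feeding this, together with the uniform continuity of $F$, $U_0$, $Y_l$ on the relevant compact sets and with $(U_k,V_k)\to(U,V)$ uniformly, into the representation formula, each term converges uniformly in $(t,x)\in Q_T$; therefore $\tU_k\to\tU$ uniformly on $Q_T$. As $T$ was arbitrary, $d(\Fc(U_k,V_k),\Fc(U,V))\to 0$, which is the assertion.

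I expect the main obstacle to be the behaviour near the corner $(0,0)$ and the attendant uniformity in $x$ as $x\to 0^+$: a small perturbation of the vector field may move a backward characteristic from exiting through $\{s=0\}$ to exiting through $\{x=0\}$, so the representation formula changes branch; this is harmless for the \emph{value} of $\tU_k$ only because of the compatibility $Y_l(0)=U_0(0)$ noted above and because the exit time and position depend continuously on the field. An alternative route, avoiding the explicit characteristic estimates, is a compactness–uniqueness argument: if $\Fc$ were discontinuous at $(U,V)$, the compactness of $\Dc$ would furnish a subsequence with $\Fc(U_{k_j},V_{k_j})\to (W,Z)\ne \Fc(U,V)$ in $\Dc$; the common Lipschitz bound gives $\partial_t$ and $\partial_x$ of $(\tU_{k_j},\tV_{k_j})$ bounded in $L^\infty_{\mathrm{loc}}$, so along a further subsequence they converge weak-$\star$ to the corresponding derivatives of $(W,Z)$, while the coefficients $\alpha(\cdot,U_{k_j},V_{k_j})$ and the sources $F(\cdot,U_{k_j},V_{k_j})$ converge uniformly; passing to the limit in the transport system and in the boundary and initial conditions shows that $(W,Z)$ solves the very system defining $\Fc(U,V)$, so $(W,Z)=\Fc(U,V)$ by the uniqueness in Theorem \ref{theo:TransportSolution} — a contradiction.
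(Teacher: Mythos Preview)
Your proposal is correct. Interestingly, the paper's proof is precisely your ``alternative route'': it invokes the convergence result for linear transport equations proved in the Appendix (Theorem~\ref{theo:convTransport}), whose proof is exactly a compactness--uniqueness argument --- Ascoli--Arzel\`a on the uniformly Lipschitz sequence $(\tU_k,\tV_k)$, followed by passage to the limit in the weak formulation and an appeal to uniqueness.

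Your primary approach via explicit characteristics is a genuinely different and more hands-on route. It has the advantage of being self-contained and of yielding, in principle, a quantitative modulus of continuity for $\Fc$ (through the Gronwall estimate on the flows), whereas the paper's argument is purely qualitative. The price you pay is the need to track the branch change in the representation formula near the corner $(0,0)$, which you correctly resolve via the compatibility $Y_l(0)=U_0(0)$ (a consequence of $\bu_\epsilon(0)=\bu$). The paper's approach sidesteps this entirely by working with the weak formulation, which is insensitive to such geometric details; it also packages the convergence step into a reusable lemma (Theorem~\ref{theo:convTransport}) that applies to any transport equation with uniformly convergent Lipschitz coefficients and data, not just the particular one arising from $\Fc$.
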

\begin{proof}
    Let  $\{ (U_n,V_n)\} \subset  \Dc$ be any sequence such that 
    $$d((U_n,V_n),(U,V))\underset{n\to +\infty}{\to} 0$$
    for some $(U,V)\in \Dc$. 
    This is equivalent to saying that $(U_n)_{n\geq 0}$ tends to $U$ (and $(V_n)_{n\geq 0}$ tends
    to $V$) {\em uniformly} on all the sets $[0,T]\times [0,L]$, $T>0$. This yields that 
    $\alpha(.,U_n,V_n) \to \alpha(.,U,V)$ and  $\beta(.,U_n,V_n)\to \beta(.,U,V)$ uniformly on all
   the  sets $[0,T] \times [0,L]$, $T>0$.  Using Theorem \ref{theo:convTransport} in Appendix, 
    we deduce that $\Fc(U_n,V_n)$ converges uniformly on all the sets $[0,T]\times [0,L]$, $T>0$, to a pair
    $(\tilde{U},\tilde{V})\in \Dc$ which is the unique solution of the system
    \begin{equation}
        \begin{cases}
            \partial_t \tU+\alpha (x,U,V)\partial_x \tU=F(x,U,V),\\
            \partial_t \tV-\beta(x,U,V)\partial_x \tV=G(x, U, V),\\
            \tU(t,0)=Y_l(t),\\
            \tV(t,L)=Y_r(t),\\
            \tU(0,x)=U_0(x),\\
            \tV(0,x)=V_0(x).
        \end{cases}
    \end{equation}
 By Definition \ref{def2}, we conclude that $(\tilde U, \tilde V)=\Fc (U,V)$. It follows that
 $\Fc$ is continuous.
\end{proof}
We are now in a position to prove that $\Fc$ has a fixed-point in $\Dc$. 
\begin{prop}
    The map $\Fc$ has a fixed-point in $\Dc$, and therefore the system \eqref{eq:sysOriginal}-\eqref{eq:conditionsBord}
     has at least one solution.
\end{prop}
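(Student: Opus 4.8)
The plan is to combine the three preceding propositions with Schauder's fixed-point theorem and then to observe that a fixed point of $\Fc$ is exactly a solution of the reduced system, which in turn translates back to a solution of the original system \eqref{eq:sysOriginal}-\eqref{eq:conditionsBord}. First I would fix the constants once and for all: choosing $A,B,c,K,L,M,\gamma$ as dictated by the hypotheses (recall that $M$ is bounded uniformly for $\epsilon\in[0,\epsilon_0]$), the preceding proposition supplies $I_0,I_1,P>0$ with $\Fc(\Dc)\subset\Dc$; then by item (i)--(ii) of the discussion following the definition of $\Dc$ one shrinks $\delta$ and $\epsilon_0$ so that the initial data bounds $\Vert U_0\Vert_{L^\infty},\Vert V_0\Vert_{L^\infty}\le I_0$, $\Vert U_0'\Vert_{L^\infty},\Vert V_0'\Vert_{L^\infty}\le I_1$ from \eqref{Q1} hold, and so that \eqref{eq:borneGradient} forces the sup of the relevant derivatives of $F$ and $G$ below $P$. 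With these choices, $\Dc$ is a nonempty (it contains, e.g., a suitable constant or the affine interpolant of the data) convex compact subset of the Fréchet space $[\Cc^0([0,+\infty[\times[0,L])]^2$, and $\Fc:\Dc\to\Dc$ is continuous.

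Next I would invoke the Schauder--Tychonoff fixed-point theorem in the locally convex setting (a continuous self-map of a nonempty convex compact subset of a Fréchet space has a fixed point) to obtain $(U,V)\in\Dc$ with $\Fc(U,V)=(U,V)$. By Definition \ref{def2}, this means $(\tU,\tV)=(U,V)$, so $(U,V)$ solves
\begin{equation*}
\begin{cases}
\partial_t U+\alpha(x,U,V)\partial_x U=F(x,U,V),\\
\partial_t V-\beta(x,U,V)\partial_x V=G(x,U,V),\\
U(t,0)=Y_l(t),\quad V(t,L)=Y_r(t),\\
U(0,x)=U_0(x),\quad V(0,x)=V_0(x),
\end{cases}
\end{equation*}
almost everywhere, since $(U,V)\in\Lip(\R^+\times[0,L])^2$. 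Unwinding the substitutions $u_\epsilon:=\bu_\epsilon+U$, $v_\epsilon:=\bv_\epsilon+V$ and recalling the definitions of $\alpha,\beta,F,G,Y_l,Y_r$ together with the stationary equations \eqref{eq:sysStationnaire}, one checks directly that $(u_\epsilon,v_\epsilon)$ satisfies \eqref{eq:sysOriginal} a.e., the boundary conditions \eqref{eq:conditionsBord}, and the feedback dynamics \eqref{eq:feedback}; here one uses that $y_l=Y_l+\bu$ and $y_r=Y_r+\bv$ solve \eqref{eq:feedback} whenever $Y_l,Y_r$ solve \eqref{CCC}, which is immediate from the chain rule since $|y_l-\bu|=|Y_l|$. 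This establishes existence of at least one Lipschitz solution.

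I do not expect a genuine obstacle here, since the three propositions have done the substantive work; the only point requiring mild care is the passage from the abstract fixed point to an honest solution of the original quasilinear system, namely verifying that every algebraic cancellation in the reduction of Section \ref{sec:Reduction} is reversible and that the regularity ($\Lip$ in both variables, hence differentiable a.e. with the PDE holding a.e.) is preserved under the identification. One should also note that the fixed point inherits from $\Dc$ the a priori Lipschitz bounds in $t$ and $x$, which will be reused in Section \ref{sec:Stabilisation}. Uniqueness is deferred to the stabilization argument and is therefore not claimed at this stage.
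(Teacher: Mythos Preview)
Your proposal is correct and follows essentially the same route as the paper: apply the Schauder--Tychonoff fixed-point theorem to the continuous self-map $\Fc$ of the convex compact set $\Dc$ in the Fr\'echet space $[\Cc^0([0,+\infty[\times[0,L])]^2$, then undo the reduction $u_\epsilon=\bu_\epsilon+U$, $v_\epsilon=\bv_\epsilon+V$ to recover a solution of \eqref{eq:sysOriginal}-\eqref{eq:conditionsBord}. One small inaccuracy: uniqueness is not deferred to the stabilization section but is established in the very next proposition via a Gronwall-type $L^2$ energy estimate.
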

\begin{proof} As the map $\Fc:\Dc\to \Dc$ is continuous and $\Dc$ is a convex, compact subset of the Fr\'echet space $ [ \Cc^0([0,+\infty[\times [0,L] )  ] ^2$, it has at least one fixed-point by 
 Tihonov fixed-point theorem  (see e.g. \cite[Corollary 9.6]{zeidler}.) The pair $(U,V)$ solves  \eqref{eq:sysCanonique}-\eqref{eq:bordCanonique},
 while the pair $(u_\epsilon, v_\epsilon ) :=(U+\bar u_\epsilon, V+\bar v_\epsilon )$ solves 
 \eqref{eq:sysOriginal}-\eqref{eq:conditionsBord}.
\end{proof}

We are now concerned with the uniqueness of the solution of the system \eqref{eq:sysOriginal}-\eqref{eq:conditionsBord}.

\begin{prop}
    There is at most one solution to \eqref{eq:sysOriginal}-\eqref{eq:conditionsBord} in the class  $\textrm{Lip}([0,T]\times [0,L])^2$. 
\end{prop}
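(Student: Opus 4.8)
The plan is to prove uniqueness by an energy estimate on the difference of two solutions, in the classical spirit of uniqueness proofs for quasilinear hyperbolic systems, the only delicate point being the care demanded by the mere Lipschitz regularity. Let $(u_\epsilon^1,v_\epsilon^1)$ and $(u_\epsilon^2,v_\epsilon^2)$ be two solutions of \eqref{eq:sysOriginal}--\eqref{eq:conditionsBord} in $\Lip([0,T]\times[0,L])^2$. First one observes that the boundary data $y_l,y_r$ produced by \eqref{eq:feedback} are uniquely determined: the right-hand side $y\mapsto -K(y-\bu)|y-\bu|^{-\gamma}$ is continuous and nonincreasing, hence one-sided Lipschitz, which yields forward uniqueness for \eqref{eq:feedback}. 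Thus both solutions carry the same initial and the same boundary data, and the differences $W:=u_\epsilon^1-u_\epsilon^2$, $Z:=v_\epsilon^1-v_\epsilon^2$ are Lipschitz on $[0,T]\times[0,L]$ and satisfy $W(0,\cdot)=Z(0,\cdot)=0$, $W(t,0)=0$ and $Z(t,L)=0$ for all $t$. One also uses that the boundary conditions in \eqref{eq:conditionsBord} are placed at the inflow ends, so that the relevant solutions take their values where $\lambda,\mu\ge c$, e.g. in $\overline{B((\bu,\bv),2R)}$ (cf. \eqref{eq:hypVitesse}); in particular $\lambda,\mu,f,g$ and their first derivatives are bounded along the two solutions.

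Next, subtracting the two copies of \eqref{eq:sysOriginal} and rearranging, one finds that $W$ and $Z$ solve, a.e. in $(0,T)\times(0,L)$, the linear transport equations
\[
\partial_t W+\lambda(u_\epsilon^1,v_\epsilon^1)\,\partial_x W=-\big[\lambda(u_\epsilon^1,v_\epsilon^1)-\lambda(u_\epsilon^2,v_\epsilon^2)\big]\partial_x u_\epsilon^2+\epsilon\big[f(u_\epsilon^1,v_\epsilon^1)-f(u_\epsilon^2,v_\epsilon^2)\big],
\]
\[
\partial_t Z-\mu(u_\epsilon^1,v_\epsilon^1)\,\partial_x Z=\big[\mu(u_\epsilon^1,v_\epsilon^1)-\mu(u_\epsilon^2,v_\epsilon^2)\big]\partial_x v_\epsilon^2+\epsilon\big[g(u_\epsilon^1,v_\epsilon^1)-g(u_\epsilon^2,v_\epsilon^2)\big].
\]
By the mean value theorem, the $\Cc^2$ regularity of $\lambda,\mu,f,g$, and the uniform bound on $\partial_x u_\epsilon^2$ and $\partial_x v_\epsilon^2$, both right-hand sides are bounded a.e. by $C_0\big(|W|+|Z|\big)$, with $C_0$ depending only on $T$, $\epsilon$ and the Lipschitz norms of the two solutions.

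Then one closes the argument with a Gr\"onwall estimate on $\mathcal E(t):=\int_0^L\big(W^2+Z^2\big)(t,x)\,dx$. Since $W,Z$ are Lipschitz, $\mathcal E$ is Lipschitz, hence absolutely continuous, and for a.e. $t$ one may differentiate under the integral sign and integrate by parts in $x$ (both legitimate for $W^{1,\infty}$ functions), obtaining
\[
\frac{d}{dt}\int_0^L W^2\,dx=-\Big[\lambda(u_\epsilon^1,v_\epsilon^1)\,W^2\Big]_{x=0}^{x=L}+\int_0^L\partial_x\big(\lambda(u_\epsilon^1,v_\epsilon^1)\big)\,W^2\,dx+2\int_0^L W\,\big(\text{r.h.s. of the }W\text{-equation}\big)\,dx.
\]
Using $W(t,0)=0$ and $\lambda(u_\epsilon^1(t,L),v_\epsilon^1(t,L))\ge c>0$, the boundary contribution is $\le 0$; the symmetric computation for $\int_0^L Z^2\,dx$ uses $Z(t,L)=0$ and $\mu(u_\epsilon^1(t,0),v_\epsilon^1(t,0))\ge c>0$. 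Adding the two identities and invoking the bound $C_0(|W|+|Z|)$ yields $\mathcal E'(t)\le C_1\,\mathcal E(t)$ for a.e. $t$, with $\mathcal E(0)=0$, so Gr\"onwall's lemma forces $\mathcal E\equiv 0$ on $[0,T]$, i.e. $u_\epsilon^1=u_\epsilon^2$ and $v_\epsilon^1=v_\epsilon^2$.

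The one step that needs genuine care — the main, if modest, obstacle — is the low regularity. One has to justify that $t\mapsto\mathcal E(t)$ is differentiable a.e. with $\mathcal E'(t)=2\int_0^L\big(W\partial_t W+Z\partial_t Z\big)\,dx$, and that for a.e. $t$ the equations for $W$ and $Z$ hold for a.e. $x$, so that the integration by parts is valid; both follow from Rademacher's theorem combined with Fubini applied to the two-dimensional null set where the $(u_\epsilon^i,v_\epsilon^i)$ fail to be differentiable or \eqref{eq:sysOriginal} fails, together with the uniform Lipschitz continuity in $t$ of $W^2$ and $Z^2$. An equivalent route, closer to the transport estimates of the Appendix, is to integrate the $W$- and $Z$-equations along their characteristic curves — well defined because the speeds $\lambda(u_\epsilon^1,v_\epsilon^1)$ and $\mu(u_\epsilon^1,v_\epsilon^1)$ are bounded, Lipschitz in $x$ and positive, so every characteristic issues either from $\{t=0\}$ or from the inflow end, where $W$, resp. $Z$, vanishes — and then apply Gr\"onwall to $s\mapsto\sup_{x\in[0,L]}\big(|W|+|Z|\big)(s,x)$; this variant needs the same Fubini-type remark, now to ensure the chain rule holds along a.e. characteristic.
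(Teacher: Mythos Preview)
Your proof is correct and follows essentially the same $L^2$-energy/Gr\"onwall route as the paper: the paper works with the differences $\hat U=U^1-U^2$, $\hat V=V^1-V^2$ of the reduced unknowns \eqref{eq:sysCanonique}--\eqref{eq:bordCanonique}, multiplies by $(\hat U,\hat V)$, integrates over $(0,t)\times(0,L)$ and estimates the same three groups of terms (source, transport, coupling) before applying Gr\"onwall. Your explicit justification of forward uniqueness for the boundary ODE \eqref{eq:feedback} via the one-sided Lipschitz (monotone) structure, and your discussion of the Rademacher/Fubini issues, are points the paper leaves implicit.
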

\begin{proof}
Clearly, it is sufficient to prove the uniqueness of the solution $(U,V)$ to the system  \eqref{eq:sysCanonique}-\eqref{eq:bordCanonique}.

Assume given  $U_0,V_0\in \textrm{Lip}([0,L])$ and two pairs $(U^i,V^i) \in \textrm{Lip}([0,T]\times [0,L])^2$ ($i=1,2$) of solutions of \eqref{eq:sysCanonique}-\eqref{eq:bordCanonique}; that is, we have for $i=1,2$
\[
\left\{
\begin{array}{l}
\partial _t U^i +\alpha (x, U^i,V^i) \partial _x U^i= F(x,U^i,V^i), \\
\partial _t V^i -\beta (x, U^i,V^i) \partial _x V^i= G(x,U^i,V^i), \\
U^i(t,0)=Y_l(t), \quad V^i(t,L)=Y_r(t), \\
U^i(0,x)=U_0(x), \quad V^i(0,x)=V_0(x). 
\end{array}
\right.
\]
Set $\hat U:=U^1-U^2$, $\hat V:=V^1-V^2$, and 
\begin{eqnarray*}
\hat \alpha &:=& \alpha  (x, U^1,V^1) - \alpha (x,U^2,V^2), \\
 \hat \beta &:=& \beta (x, U^1,V^1) - \beta (x,U^2,V^2), \\
 \hat F &:=& F(x, U^1,V^1) -  F(x,U^2,V^2), \\
 \hat G &:=& G(x, U^1,V^1) - G(x,U^2,V^2). 
\end{eqnarray*} 
Then we see that $\hat U, \hat V \in \textrm{Lip} ([0,T]\times [0,L])$ and that the pair $(\hat U,\hat V)$ solves the system
\begin{eqnarray}
\partial _t \hat U + \alpha (x, U^1,V^1) \partial _x \hat U + \hat \alpha  \partial _x U^2 &=& \hat F, \label{B1}\\ 
\partial _t \hat V -\beta (x,U^1,V^1)  \partial _x \hat V -\hat \beta  \partial _x V^2 &=& \hat G, \label{B2}\\
 \hat U (t,0)= \hat V(t,L)&=&0, \label{B3}\\
 \hat U (0,x)=\hat V(0,x) &=& 0.\label{B4}
\end{eqnarray} 
After multiplying \eqref{B1} (resp.  \eqref{B2}) by $\hat U$ (resp. by $\hat V$),  and integrating over $(0,t)\times (0,L)$, we obtain 
\begin{eqnarray}
\frac{1}{2} (\Vert \hat U\Vert ^2_{L^2(0,L)} + \Vert \hat V\Vert ^2 _{L^2(0,L)} )
&=& \int_0^t \!\!\!\int_0^L [\hat F\hat U+\hat G\hat V] dxd\tau  \nonumber \\
&&\quad   - \int_0^t\!\!\!\int_0^L [\alpha (x,U^1,V^1)\hat U\partial _x \hat U -\beta (x,U^1,V^1) \hat V \partial _x \hat V]dxd\tau\nonumber \\
&& \quad - \int_0^t\!\!\!\int_0^L [\hat \alpha \hat U \partial _x U^2 -\hat \beta \hat V \partial _x V^2 ]dxd\tau  =: I_1 -  I_2 - I_3.
\end{eqnarray}
Then 
\[
|I_1| \le \int_0^t \!\!\! \int_0^L  [ P(  |\hat U|+|\hat V|) |\hat U|   +P ( |\hat U |+ | \hat V| )|\hat V| ]   dxd\tau 
 \le 2P \int_0^t \!\!\! \int_0^L \big( |\hat U|^2 +|\hat V|^2\big) dxd \tau . 
\]
Similarly,  
\[
\vert I_3\vert \le MB \int_0^t\!\!\! \int_0^L   ( |\hat U| +|\hat V|) ^2 dxd\tau
\le 2MB\int_0^t\!\!\!\int_0^L (| \hat U | ^2 + | \hat V|^2) dxd\tau . 
\]
On the other hand, 
integrating by part in $I_2$ yields 
\begin{eqnarray*}
-I_2&=&\int_0^t\!\!\!\int_0^L ( \partial _x [\alpha (x,U^1,V^1)] |\hat U|^2 -\partial _x [\beta (x, U^1,V^1) ] |\hat V|^2 )dxd\tau \\
&&\quad -\int_0^t \left[ \alpha (x,U^1,V^1) |\hat U|^2 -\beta (x,U^1,V^1) |\hat V|^2 \right]_0^L d\tau\\
&\le& \int_0^t\!\!\!\int_0^L ( \partial _x [\alpha (x,U^1,V^1)] |\hat U|^2 -\partial _x [\beta (x, U^1,V^1) ] |\hat V|^2 )dxd\tau \\
&\le&  M(1+2B) \int_0^t\!\!\!\int_0^L ( | \hat U |^2 + | \hat V|^2)dxd\tau  
\end{eqnarray*}
where we used the definitions of $\alpha,\beta, M,B$, \eqref{eq:hypVitesse} and \eqref{B3}. 
Gathering together all the estimates, we arrive to
\[
\int_0^L [ |\hat U (t,x)|^2 + |\hat V (t,x)|^2] dx \le \left[ M(1+4B) +2P \right] \int_0^t\int_0^L ( |\hat U|^2 + |\hat V|^2)dxd\tau , \quad \forall t\ge 0.  
\] 
An application of Gronwall's lemma yields $\hat U\equiv 0$ and $\hat V\equiv 0$. 
\end{proof}

\subsection{Lyapunov Functions and Decay Rates}
\label{sec:Stabilisation}

Let $(U,V, Y_r,Y_l)$ denote the solution of system \eqref{eq:sysCanonique}-\eqref{CCC}.

To simplify the computations, we denote by $\tC$ a positive constant such that for all 
$x\in [0,L]$ and all $(a,b)\in B((0,0), R)$, we have
$$|F(x,a,b)|\leq \frac{\tC\epsilon}{2}(|a|+|b|),\qquad |G(x,a,b)|\leq \frac{\tC\epsilon}{2}(|a|+|b|)$$
and such that for all $(t,x)\in \R^+\times [0,L]$, we have 
$$\sup \big( \alpha(x,U(t,x),V(t,x)),\beta(x,U(t,x),V(t,x)) \big) \leq \tC$$
and 
   \begin{equation}
    \label{Y3}
    |\frac{d}{dx}\big( \alpha(x,U(t,x),V(t,x)) \big) |\leq \tC,\qquad
   |\frac{d}{dx}\big( \beta(x,U(t,x),V(t,x)) \big) |\leq \tC.
   \end{equation}

We first introduce a Lyapunov function to investigate the long-time behavior  of $(U,V)$. 
\begin{defi}
    Given any $\theta >0$, let the function $L_\theta$ be defined by
    $$ \Lc _\theta(a,b):=\int_0^L [ a^2(x)e^{-\theta
            x}+b^2(x)e^{-\theta(L-x)} ] dx \qquad \forall (a,b)\in [L^2(0,L)]^2.$$
\end{defi}
Then we have the following result. 
\begin{prop}
\label{prop1}
For almost every $t\in \R ^+$, it holds
$$\frac{d}{dt}\Lc _\theta(U(t,.),V(t,.))\leq(\tC\epsilon\frac{3+e^{\theta L}}{2}
+\tC-c\theta)\Lc _\theta(U(t,.),V(t,.))+\tC (Y_r^2(t)+Y_l^2(t)).$$
\end{prop}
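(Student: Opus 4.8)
The plan is to differentiate $\Lc_\theta(U(t,\cdot),V(t,\cdot))$ in time, substitute the equations \eqref{eq:sysCanonique} for $\partial_t U$ and $\partial_t V$, and integrate by parts in the transport terms to generate the favorable dissipation $-c\theta\,\Lc_\theta$. First I would write
\[
\frac{d}{dt}\Lc_\theta(U,V)=2\int_0^L\!\!\big[U\,\partial_tU\,e^{-\theta x}+V\,\partial_tV\,e^{-\theta(L-x)}\big]dx,
\]
and replace $\partial_tU=-\alpha(x,U,V)\partial_xU+F(x,U,V)$, $\partial_tV=\beta(x,U,V)\partial_xV+G(x,U,V)$. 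This splits the derivative into a ``transport part'' and a ``source part''. The source part is immediately controlled: using the constant $\tC$ with $|F|,|G|\le\frac{\tC\epsilon}{2}(|U|+|V|)$ together with $2|U|(|U|+|V|)\le 3U^2+V^2$ and the bounds $e^{-\theta x}\le 1$, $e^{-\theta(L-x)}\le e^{\theta L}$ on the cross terms, one gets a contribution bounded by $\tC\epsilon\,\frac{3+e^{\theta L}}{2}\,\Lc_\theta(U,V)$, which is exactly the first term on the right of the claimed inequality.

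The heart of the estimate is the transport part $-2\int_0^L\alpha\,U\,\partial_xU\,e^{-\theta x}dx+2\int_0^L\beta\,V\,\partial_xV\,e^{-\theta(L-x)}dx$. I would rewrite $2U\partial_xU=\partial_x(U^2)$ and $2V\partial_xV=\partial_x(V^2)$ and integrate by parts. For the first integral,
\[
-\int_0^L\alpha\,e^{-\theta x}\,\partial_x(U^2)\,dx=-\big[\alpha e^{-\theta x}U^2\big]_0^L+\int_0^L\partial_x\!\big(\alpha e^{-\theta x}\big)U^2\,dx,
\]
and $\partial_x(\alpha e^{-\theta x})=(\partial_x\alpha)e^{-\theta x}-\theta\alpha e^{-\theta x}$. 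The boundary term at $x=0$ involves $U(t,0)^2=Y_l(t)^2$ (which after the feedback acts only through the crude bound $\alpha\le\tC$, giving $+\tC Y_l^2$), while the boundary term at $x=L$ has a good sign (it is $-\alpha(L,\cdot)e^{-\theta L}U(t,L)^2\le 0$) and can be dropped. In the volume term, the key sign-definite piece is $-\theta\int_0^L\alpha e^{-\theta x}U^2\,dx\le -c\theta\int_0^L U^2e^{-\theta x}\,dx$ by \eqref{Y1}, while the remaining piece $\int_0^L(\partial_x\alpha)e^{-\theta x}U^2\,dx$ is bounded in absolute value by $\tC\int_0^LU^2e^{-\theta x}\,dx$ using \eqref{Y3}. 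The $V$-integral is handled symmetrically: integrating by parts produces $+\big[\beta e^{-\theta(L-x)}V^2\big]_0^L$, whose value at $x=L$ is $\beta(L,\cdot)V(t,L)^2=\beta(L,\cdot)Y_r(t)^2\le\tC Y_r^2$ and whose value at $x=0$ has the good sign; the volume term again yields $-c\theta\int_0^LV^2e^{-\theta(L-x)}\,dx$ plus a piece bounded by $\tC\int_0^LV^2e^{-\theta(L-x)}\,dx$. Note the sign of $\partial_x$ of the weight $e^{-\theta(L-x)}$ is $+\theta e^{-\theta(L-x)}$, but it enters the $V$-equation with the opposite transport sign $+\beta\partial_xV$, so the dissipative term again comes out as $-c\theta$ times the weighted $V^2$-integral; I will double-check this sign bookkeeping carefully.

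Collecting all contributions gives
\[
\frac{d}{dt}\Lc_\theta(U,V)\le\Big(\tC\epsilon\,\tfrac{3+e^{\theta L}}{2}+\tC-c\theta\Big)\Lc_\theta(U,V)+\tC\big(Y_r^2(t)+Y_l^2(t)\big),
\]
which is the assertion. The regularity to justify these manipulations (differentiation under the integral, integration by parts, validity for a.e.\ $t$) is available because $(U,V)\in\Dc\subset\Lip(\R^+\times[0,L])^2$, so $U,V$ and their first derivatives are bounded and the chain rule holds a.e.; this is where one invokes the Lipschitz class rather than requiring classical $\Cc^1$ solutions. The only genuinely delicate point is the sign/weight bookkeeping in the integration by parts for the two equations — keeping straight that the $e^{-\theta x}$ weight pairs with the left-moving characteristic and $e^{-\theta(L-x)}$ with the right-moving one so that both boundary terms that survive are the ones carrying $Y_l^2$ and $Y_r^2$ (with the ``outflow'' boundary terms having the favorable sign and being discarded). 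Everything else is a routine application of Cauchy--Schwarz/Young-type pointwise inequalities and the uniform bounds encoded in $\tC$.
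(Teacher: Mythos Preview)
Your proposal is correct and follows essentially the same route as the paper: differentiate $\Lc_\theta$, split into transport and source parts, integrate the transport terms by parts to produce the dissipation $-c\theta\,\Lc_\theta$, the lower-order $\tC\,\Lc_\theta$ from $\partial_x\alpha,\partial_x\beta$, and the boundary contributions $\tC(Y_l^2+Y_r^2)$, and bound the source part via $|F|,|G|\le\frac{\tC\epsilon}{2}(|U|+|V|)$ together with the weight comparison yielding the factor $\frac{3+e^{\theta L}}{2}$. The only cosmetic difference is that the paper packages the cross-term estimate through the single inequality $e^{-\theta(L-x)}+e^{-\theta x}\le(1+e^{\theta L})\min(e^{-\theta x},e^{-\theta(L-x)})$, whereas you reach the same constant via $2|U|(|U|+|V|)\le 3U^2+V^2$ and the ratio bound between the two weights.
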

\begin{proof}
    Since $U$ and $V$ are {\em uniformly bounded} and {Lipschitz continuous}, and hence differentiable almost everywhere by Rademacher theorem,
    the derivative of  $\Lc _\theta(U(t,.),V(t,.))$ exists almost everywhere and it is obtained by differentiating the integrand
    with respect to $t$. Thus we obtain
    \begin{align*}
\frac{d}{dt}\Lc _\theta(U(t,.),V(t,.))&=\int_0^L  2   {F(x,U(t,x),V(t,x))U(t,x)e^{-\theta x}dx}\\
&\quad +\int_0^L{  2  G(x,U(t,x),V(t,x))V(t,x)e^{-\theta (L-x)}dx}\\
&\quad -\int_0^L{\alpha(x,U(t,x),V(t,x))\partial_x (U^2)(t,x)e^{-\theta x}dx}\\
&\quad +\int_0^L{\beta(x,U(t,x),V(t,x))\partial_x (V^2)(t,x)e^{-\theta(L-x)}dx}\\
&=:I_1+I_2+I_3+I_4.
\end{align*}
Let us begin with $I_3$.
\begin{multline*}
  I_3=-[e^{-\theta x}\alpha(x,U(t,x),V(t,x))U^2(t,x)]_0^L-\theta
   \int_0^L{\alpha(x,U(t,x),V(t,x))U^2(t,x)e^{-\theta x}dx}\\
   +\int_0^L{\frac{d}{dx}(\alpha(x,U(t,x),V(t,x))) U^2(t,x)e^{-\theta x}dx}.
   \end{multline*}
As far  $I_4$ is concerned, we have that 
\begin{multline*}
  I_4=  
  [e^{-\theta(L- x)}\beta(x,U(t,x),V(t,x))V^2(t,x)]_0^L-\theta
   \int_0^L{\beta(x,U(t,x),V(t,x))V^2(t,x)e^{-\theta(L- x)}dx}\\
   -\int_0^\Lc {\frac{d}{dx}(\beta(x,U(t,x),V(t,x))) V^2(t,x)e^{-\theta(L- x)}dx}.
   \end{multline*}
   Using \eqref{Y1} and \eqref{Y3}, we infer that
   \begin{eqnarray}
    I_3+I_4 &\leq& \alpha(0,U(t,0),V(t,0))U^2(t,0)+\beta(L,U(t,L),V(t,L))V^2(t,L) \nonumber\\
   && \quad +(\tC-\theta c)
   \Lc _\theta(U(t,.),V(t,.)). \label{Y5}
   \end{eqnarray}
   Let us evaluate the remaining terms $I_1$ and $I_2$. We have that
   \begin{align}
   I_1+I_2&\leq \tC\epsilon\int_0^L{ [U^2(t,x)e^{-\theta
           x}+V^2(t,x)e^{-\theta(L-x)}+U(t,x)V(t,x)\left(e^{-\theta (L-x)}+e^{-\theta x}\right) ] dx} \nonumber\\
   &\leq \tC\epsilon
   \Lc _\theta(U(t,.),V(t,.))+\tC\epsilon\int_0^L{\frac{U^2(t,x)+V^2(t,x)}{2}(e^{-\theta(L-x)}+e^{-\theta x})dx}  \nonumber\\
   &\leq \tC \epsilon\frac{3+e^{\theta L}}{2}\Lc _\theta(U(t,.),V(t,.)), \label{Y6}
   \end{align}
   where we used twice in the last inequality the following estimate
   $$\forall x\in [0,L],\qquad e^{-\theta(L-x)}+e^{-\theta x}\leq (e^{\theta L}+1)e^{-\theta
       x}.$$
 The proof of Proposition \ref{prop1} is completed by gathering together \eqref{Y5} and \eqref{Y6}.  
\end{proof}
Next, we introduce another Lyapunov function for the dynamics of the boundary conditions. 
\begin{defi}
    Given any $\theta>0$, let  the function $\tL_\theta$ be defined by
    $$\forall (a,b)\in \R^2,\qquad \tL_\theta(a,b):= \frac{\tC |a|^{\gamma+2}}{K(\gamma+2)}e^{\theta \frac{c}{K\gamma}
        |a|^\gamma}+\frac{\tC|b|^{\gamma+2}}{K(\gamma+2)}e^{\theta \frac{c}{K\gamma} |b|^\gamma}.$$
\end{defi}

Then the following result holds.
\begin{prop}
\label{prop2}
    We have for all $t\ge 0$
  $$\frac{d}{dt}\tL _\theta (Y_l(t),Y_r(t))\leq-c\theta \tL _\theta (Y_l(t),Y_r(t))-\tC(Y_l^2(t)+Y_r^2(t)).$$ 
\end{prop}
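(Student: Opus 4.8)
The plan is to compute $\frac{d}{dt}\tL_\theta(Y_l(t),Y_r(t))$ directly using the ODEs in \eqref{CCC} and then bound the result. Since $Y_l$ and $Y_r$ decouple completely in both the dynamics \eqref{CCC} and in the definition of $\tL_\theta$ (which is a sum of a function of $|Y_l|$ and a function of $|Y_r|$), it suffices to treat one of them, say $Y_l$, and to establish
$$\frac{d}{dt}\left( \frac{\tC|Y_l(t)|^{\gamma+2}}{K(\gamma+2)}e^{\theta\frac{c}{K\gamma}|Y_l(t)|^\gamma}\right)\leq -c\theta\,\frac{\tC|Y_l(t)|^{\gamma+2}}{K(\gamma+2)}e^{\theta\frac{c}{K\gamma}|Y_l(t)|^\gamma}-\tC\,Y_l^2(t),$$
and then add the analogous inequality for $Y_r$. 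Note first that for $t$ past the extinction time $(\gamma K)^{-1}|Y_l(0)|^\gamma$ one has $Y_l(t)=0$ and both sides vanish, so the inequality is trivial there; the content is for $t$ before extinction, where $Y_l(t)\neq 0$ and we may differentiate freely, writing $\frac{d}{dt}|Y_l|=\mathrm{sgn}(Y_l)\dot Y_l=-K|Y_l|^{1-\gamma}$ from \eqref{CCC}.

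The key computation is the following: setting $r(t):=|Y_l(t)|$, so that $\dot r=-Kr^{1-\gamma}$, one differentiates the product. The derivative of $r^{\gamma+2}$ contributes $(\gamma+2)r^{\gamma+1}\dot r=-(\gamma+2)Kr^2$; the derivative of the exponential $e^{\theta\frac{c}{K\gamma}r^\gamma}$ contributes $\theta\frac{c}{K\gamma}\cdot\gamma r^{\gamma-1}\dot r\cdot e^{(\cdots)}=-\theta\frac{c}{K}r^{\gamma-1}\cdot Kr^{1-\gamma}\cdot e^{(\cdots)}=-\theta c\,e^{(\cdots)}$. Multiplying out, the first term yields $\frac{\tC}{K(\gamma+2)}\cdot\big(-(\gamma+2)Kr^2\big)e^{(\cdots)}=-\tC r^2 e^{(\cdots)}\leq -\tC r^2$ (using $e^{(\cdots)}\geq1$), and the second term yields exactly $-c\theta$ times the Lyapunov term itself. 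Summing, $\frac{d}{dt}\big(\tfrac{\tC r^{\gamma+2}}{K(\gamma+2)}e^{(\cdots)}\big)\le -c\theta\,\tfrac{\tC r^{\gamma+2}}{K(\gamma+2)}e^{(\cdots)}-\tC r^2$, which is precisely what is needed. Adding the identical estimate for $Y_r$ completes the proof.

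I do not expect any serious obstacle here; this is a routine ODE Lyapunov computation. The only points requiring a word of care are the non-smoothness of $t\mapsto|Y_l(t)|^\gamma$ and of the Lyapunov function at the extinction time (handled by observing the function is $C^1$ away from the extinction time, continuous everywhere, and identically zero afterwards, so the differential inequality holds in the appropriate almost-everywhere / one-sided sense, which is all that is used in the subsequent Grönwall-type argument), and the consistent use of $e^{\theta\frac{c}{K\gamma}|Y_l|^\gamma}\ge 1$ to discard the exponential factor in the $-\tC r^2$ term. The exponents in the definition of $\tL_\theta$ are evidently reverse-engineered precisely so that these two cancellations occur, so the computation goes through cleanly.
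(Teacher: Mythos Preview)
Your proposal is correct and follows essentially the same route as the paper: both compute the time derivative of $\tL_\theta$ via the chain rule and the ODE \eqref{CCC}, obtain the exact identity $\frac{d}{dt}\tL_\theta=-\tC Y_l^2 e^{\theta c|Y_l|^\gamma/(K\gamma)}-\tC Y_r^2 e^{\theta c|Y_r|^\gamma/(K\gamma)}-c\theta\,\tL_\theta$, and then drop the exponential factors using $e^{(\cdots)}\ge 1$. Your additional remarks on decoupling and on regularity at the extinction time are sound and in fact slightly more careful than the paper, which proceeds purely formally.
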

\begin{proof}
    From
    $$\partial_a \tL _\theta (a,b)=\frac{\tC}{K(\gamma+2)}((\gamma+2)a |a|^\gamma+\frac{\theta c}{K}a
    |a|^{2\gamma})e^{\theta \frac{c}{K\gamma} |a|^\gamma} ,$$
    $$\partial_b \tL _\theta (a,b)=\frac{\tC}{K(\gamma+2)}((\gamma+2)b |b|^\gamma+\frac{\theta c}{K}b
    |b|^{2\gamma})e^{\theta \frac{c}{K\gamma} |b|^\gamma} ,$$
    and
    $$\dot{Y}_g(t)=-K\frac{Y_l(t)}{|Y_l(t)|^\gamma},$$
    $$\dot{Y}_d(t)=-K\frac{Y_r(t)}{|Y_r(t)|^\gamma},$$
    we infer that
    \begin{align*}
  \frac{d}{dt}\tL _\theta (Y_l(t),Y_r(t))
  &=\dot{Y}_g(t)\partial_a
  \tL _\theta (Y_l(t),Y_r(t))+\dot{Y}_d(t)\partial_b \tL _\theta(Y_l(t),Y_r(t))\\
  &=- \tilde C Y_l^2(t)e^{\theta \frac{c}{K\gamma} |Y_l(t)|^\gamma}- \tilde C Y_r^2(t)e^{\theta \frac{c}{K\gamma}
      |Y_r(t)|^\gamma}-c\theta \tL _\theta (Y_l(t),Y_r(t)) \\
      &\le - \tilde C Y_l^2(t)- \tilde C Y_r^2(t) -c\theta \tL _\theta (Y_l(t),Y_r(t)).
  \end{align*}
\end{proof}

We are in a position to define the Lyapunov function for the full state $(U,V,Y_l,Y_r)$. Let the functional $\Lc$ be defined by
 $$\forall t\geq 0,\qquad
   \Lc(U(t,.),V(t,.),Y_l(t),Y_r(t)):=\Lc _\theta(U(t,.),V(t,.))+\tL _\theta(Y_l(t),Y_r(t)).$$
Then the following holds.
\begin{theo}
    The functional $\Lc$ satisfies
   \begin{equation}
   \label{A1}
   \frac{d}{dt}\Lc\leq (\tC\epsilon\frac{3+e^{\theta L}}{2}
 +\tC-c\theta)\Lc\qquad \textrm{ for a.e. } t\ge 0. 
 \end{equation}
 On the other hand, the best decay rate obtained by taking the minimum of the parenthesis over $\theta$ reads
\begin{equation}
\label{A2}
-C_\epsilon := \underset{\theta \in \R}{\min} \left(\tC\epsilon\frac{3+e^{\theta L}}{2}
 +\tC-c\theta\right)
 = - \frac{c}{L}\ln (\epsilon ^{-1}) + [\frac{c}{L} +\tilde C -\frac{c}{L} \ln \frac{2c}{\tilde CL} ]+\frac{3\tilde C}{2}\epsilon\ 
 \underset{\epsilon \to 0^+}{\sim}\  -\frac{c}{L}\ln (\epsilon ^{-1}).
\end{equation}
\end{theo}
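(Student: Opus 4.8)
The plan is to establish \eqref{A1} by combining the two Lyapunov estimates already proved, and then to carry out the elementary one-variable optimization in \eqref{A2}. First I would write the full functional as $\Lc=\Lc_\theta(U,V)+\tL_\theta(Y_l,Y_r)$ and differentiate in $t$: for a.e. $t\ge 0$ this is licit since $U,V$ are Lipschitz (hence differentiable a.e. by Rademacher) and $Y_l,Y_r$ are Lipschitz in $t$. Applying Proposition \ref{prop1} to the first summand and Proposition \ref{prop2} to the second, the terms $\pm\tC(Y_l^2+Y_r^2)$ cancel exactly, and since $-c\theta\tL_\theta\le 0\le(\tC\epsilon\frac{3+e^{\theta L}}{2}+\tC-c\theta)\tL_\theta$ whenever the parenthesis is nonnegative — and one is free to discard this term when it is nonpositive and absorb it into the coefficient — one is left with
\[
\frac{d}{dt}\Lc\le\Big(\tC\epsilon\frac{3+e^{\theta L}}{2}+\tC-c\theta\Big)\big(\Lc_\theta(U,V)+\tL_\theta(Y_l,Y_r)\big)=\Big(\tC\epsilon\frac{3+e^{\theta L}}{2}+\tC-c\theta\Big)\Lc,
\]
which is precisely \eqref{A1}. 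A small technical point worth a sentence: one must check the coefficient $\tC\epsilon\frac{3+e^{\theta L}}{2}+\tC-c\theta$ is the same in both propositions' bounds, which it is after noting Proposition \ref{prop2} gives a strictly smaller coefficient $-c\theta$, so the larger one dominates.

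For \eqref{A2}, I would minimize $\varphi(\theta):=\tC\epsilon\frac{3+e^{\theta L}}{2}+\tC-c\theta$ over $\theta\in\R$. Since $\varphi$ is strictly convex with $\varphi'(\theta)=\tC\epsilon\frac{L}{2}e^{\theta L}-c$, the unique minimizer solves $e^{\theta^\star L}=\frac{2c}{\tC\epsilon L}$, i.e. $\theta^\star=\frac{1}{L}\ln\frac{2c}{\tC\epsilon L}=\frac{1}{L}\ln(\epsilon^{-1})+\frac{1}{L}\ln\frac{2c}{\tC L}$. Substituting back, $\tC\epsilon\frac{e^{\theta^\star L}}{2}=\frac{c}{L}$, so
\[
\varphi(\theta^\star)=\frac{c}{L}+\frac{3\tC\epsilon}{2}+\tC-\frac{c}{L}\ln\frac{2c}{\tC\epsilon L}=-\frac{c}{L}\ln(\epsilon^{-1})+\Big[\frac{c}{L}+\tC-\frac{c}{L}\ln\frac{2c}{\tC L}\Big]+\frac{3\tC}{2}\epsilon,
\]
whence $-C_\epsilon=\varphi(\theta^\star)$ has exactly the stated form, and the bracketed term being a fixed constant while $\epsilon\to 0^+$ gives the asymptotic equivalence $-C_\epsilon\sim-\frac{c}{L}\ln(\epsilon^{-1})$.

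There is essentially no hard obstacle here: the theorem is a bookkeeping step assembling Propositions \ref{prop1} and \ref{prop2} plus a textbook convex minimization. The only point requiring a moment of care is the handling of the $\tL_\theta$ term on the right of \eqref{A1} — one wants the differential inequality to close in terms of $\Lc$ itself rather than just $\Lc_\theta$, so one should either note that the coefficient is eventually nonnegative for the relevant (large $\theta$, small $\epsilon$) regime, or simply observe that adding the nonpositive quantity $-c\theta\tL_\theta$ to the right-hand side only helps. I would also remark that $\theta^\star>0$ for $\epsilon$ small (since $\frac{2c}{\tC\epsilon L}>1$ once $\epsilon<\frac{2c}{\tC L}$), so the minimizer lies in the physically meaningful range of weights used in the definition of $\Lc_\theta$ and $\tL_\theta$; this is what ties \eqref{A2} to the decay rate $C_\epsilon$ appearing in Theorem \ref{theo:Principal}.
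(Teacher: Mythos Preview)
Your proposal is correct and follows essentially the same approach as the paper, which simply states that \eqref{A1} ``follows at once from Propositions \ref{prop1} and \ref{prop2}'' and that the minimum in \eqref{A2} is achieved when $e^{\theta L}=2c/(\tilde C\epsilon L)$. One small simplification: your case distinction on the sign of the parenthesis is unnecessary, since the required inequality $-c\theta\,\tL_\theta\le(\tC\epsilon\frac{3+e^{\theta L}}{2}+\tC-c\theta)\tL_\theta$ is equivalent to $(\tC\epsilon\frac{3+e^{\theta L}}{2}+\tC)\tL_\theta\ge 0$, which holds for every $\theta$.
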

\begin{proof}
The estimate \eqref{A1} follows at once from Propositions \ref{prop1} and \ref{prop2}. On the other hand, the minimum of the function 
 $h(\theta ):= \tC\epsilon\frac{3+e^{\theta L}}{2}
 +\tC-c\theta$ is achieved when $e^{\theta L} =(2c)/(\tilde C\epsilon L)$, which yields \eqref{A2}.
\end{proof}

The estimate \eqref{A1} will give the exponential decay of the $L^2$ norm of $(U,V)$. To derive an exponential decay for the $L^\infty$ norm of
$(U,V)$, we need the following result.  
\begin{lemm}
\label{lem1}
    Let $u\in \textrm{Lip} ([0,L])$. Then
    \begin{equation}
    \label{L1}
    ||u||_{L^\infty (0,L)}\leq \frac{2}{\sqrt{L}}||u||_{L^2(0,L)}\quad \text{ or }\quad ||u||^3_{L^\infty(0,L)}\leq 8||u||_{L^2(0,L)}^2 ||\partial_x u||_{L^\infty(0,L)}.
    \end{equation}
    If, in addition $u(0)=0$, then 
    \begin{equation}
    \label{L2}
    ||u||^3_{L^\infty(0,L)}\leq 16 ||u||_{L^2(0,L)}^2 ||\partial_x u||_{L^\infty(0,L)} .
    \end{equation} 
\end{lemm}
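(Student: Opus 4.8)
The plan is to obtain all three estimates in Lemma~\ref{lem1} from one elementary argument exploiting the Lipschitz continuity of $u$. Set $M:=||u||_{L^\infty(0,L)}$ and $N:=||\partial_x u||_{L^\infty(0,L)}$, the latter being the best Lipschitz constant of $u$, and pick by continuity a point $x_0\in[0,L]$ with $|u(x_0)|=M$. If $M=0$ there is nothing to prove; if $N=0$, then $u$ is constant, $||u||_{L^2(0,L)}^2=M^2L$, and the first alternative in \eqref{L1} holds (and when moreover $u(0)=0$ one then gets $M=0$). So we may assume $M>0$ and $N>0$, and write $a:=M/(2N)$.

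First I would record the pointwise bound coming from the Lipschitz property: for every $x\in[0,L]$,
\[
|u(x)|\ \ge\ |u(x_0)|-N|x-x_0|\ =\ M-N|x-x_0|,
\]
hence $|u(x)|\ge M/2$ on $J:=[x_0-a,\,x_0+a]\cap[0,L]$. Since $x_0\in[0,L]$, a short case inspection on the position of $x_0$ shows that the length of $J$ is at least $\min(a,L)$. Integrating $|u|^2$ over $J$ then gives
\[
||u||_{L^2(0,L)}^2\ \ge\ \int_J|u(x)|^2\,dx\ \ge\ \frac{M^2}{4}\,\min(a,L).
\]

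Next I would distinguish two cases. If $\min(a,L)=L$, this reads $||u||_{L^2(0,L)}^2\ge \frac{M^2L}{4}$, i.e. $M\le \frac{2}{\sqrt{L}}||u||_{L^2(0,L)}$, the first alternative of \eqref{L1}. If $\min(a,L)=a=M/(2N)$, it reads $||u||_{L^2(0,L)}^2\ge \frac{M^3}{8N}$, i.e. $M^3\le 8\,||u||_{L^2(0,L)}^2\,||\partial_x u||_{L^\infty(0,L)}$, the second alternative of \eqref{L1}. For \eqref{L2}, the extra condition $u(0)=0$ yields $M=|u(x_0)-u(0)|\le Nx_0$, hence $x_0\ge M/N=2a$; therefore $x_0-a\ge a\ge0$ and $x_0\le L$, so the whole interval $[x_0-a,x_0]$ lies in $[0,L]$, has length $a$, and carries $|u|\ge M/2$; integrating over it gives $||u||_{L^2(0,L)}^2\ge \frac{M^3}{8N}$, which is even stronger than the asserted inequality with constant $16$.

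I do not expect any real obstacle here: the only points that require a little care are the degenerate cases $M=0$, $N=0$ and the elementary lower bound $\mathrm{length}(J)\ge\min(a,L)$ (obtained by checking whether $x_0$ lies near an endpoint of $[0,L]$ or in its interior), both settled above; the rest is a direct computation.
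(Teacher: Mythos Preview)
Your proof is correct. For the dichotomy \eqref{L1} your argument coincides with the paper's: pick a point where $|u|$ attains its maximum, use the Lipschitz bound to get $|u|\ge M/2$ on an interval of length at least $\min(M/(2N),L)$, and integrate.

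For \eqref{L2} you take a genuinely different route. The paper first proves an auxiliary inequality $\|u\|_{L^2}^2\le 2L^{3/2}\|u\|_{L^2}\|\partial_x u\|_{L^\infty}$ (valid when $u(0)=0$, obtained via $u(x)^2=\int_0^x 2u\,\partial_x u$ and Cauchy--Schwarz) and then combines it with the first alternative of \eqref{L1}, cubing and substituting, to reach the constant $16$. You instead observe directly that $u(0)=0$ forces $x_0\ge M/N=2a$, so the interval $[x_0-a,x_0]$ of length exactly $a$ lies inside $[0,L]$ regardless of whether $a\le L$ or not; integrating over it gives $\|u\|_{L^2}^2\ge M^3/(8N)$. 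Your argument is shorter, avoids the auxiliary estimate, and in fact yields the sharper constant $8$ in place of $16$. The paper's approach, on the other hand, illustrates how the additional boundary information upgrades the first (weaker) alternative in \eqref{L1} to the second form, which is perhaps conceptually instructive but not needed for the lemma as stated.
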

\begin{proof}If $\Vert \partial _x u \Vert _{L^\infty (0,L)}=0$, then the function $u$ is constant and the first inequality in \eqref{L1} is obvious. Assume that $\Vert \partial _x u \Vert _{L^\infty (0,L)}>0$ and 
    pick any $x\in [0,L]$ such that 
    $$u(x)=||u||_{L^\infty(0,L)}.$$
    Then we have 
    $$\forall y\in [0,L],\qquad |u(y)|\geq |u(x)|-||\partial_x u||_{L^\infty(0,L)}|y-x|,$$
    Letting
    $$D:=\frac{|u(x)|}{2||\partial_x
        u||_{L^\infty(0,L)}}=\frac{||u||_{L^\infty(0,L)}}{2||\partial_x u||_{L^\infty(0,L)}},$$
    we see that 
    $$\forall y\in [0,L],\qquad |x-y|\leq D \Rightarrow |u(y)|\geq \frac{|u(x)|}{2}=\frac{||u||_{L^\infty(0,L)}}{2} \cdot$$
    Now if $I:=[\sup(0,x-D), \inf(L, x+D)]$, we have
    $$||u||_{L^2(0,L)}^2\geq \int_{I}{|u(y)|^2 dy}\geq |I| \frac{||u||_{L^\infty(0,L)}^2}{4},$$
    and hence
    $$||u||_{L^\infty(0,L)}^2\leq \frac{4||u||^2_{L^2(0,L)}}{|I|}\cdot $$
    From the definition of $I$ (using $x\in [0,L]$), we have that $|I|\geq \inf (L, D)$,  so that 
    $$||u||_{L^\infty(0,L)}^2\leq 4\sup \left(\frac{1}{D},\frac{1}{L}\right)||u||^2_{L^2(0,L)}\cdot $$
    Using the definition of $D$, we obtain \eqref{L1}. 
    
    Assume in addition that $u(0)=0$. We claim that 
    \begin{equation}
    \label{L3}
    \Vert u\Vert _{L^2(0,L)} ^2 \le 2L^\frac{3}{2} \Vert u \Vert_{L^2(0,L)}  \Vert \partial _x u\Vert_{L^\infty (0,L)}.
    \end{equation}
    Indeed, we have by Cauchy-Schwarz inequality 
    \[
    u(x)^2 -0  =\int_0^x 2u(y)\partial _x u(y)\, dy \le 2\Vert u\Vert _{L^2(0,L)} \Vert \partial _x u\Vert _{L^2(0,L)} \le 2\sqrt{L} \Vert u\Vert _{L^2(0,L)} \Vert \partial _x u\Vert _{L^\infty (0,L)} 
    \]
    and an integration w.r.t. $x\in (0,L)$ yields at once \eqref{L3}. 
    Next, if the first estimate in \eqref{L1} holds, taking the cube of each term and using \eqref{L3}, we arrive to \eqref{L2}. If the second estimate
    in \eqref{L1} holds, then \eqref{L2}  is obvious. 
\end{proof}


Let us complete the proof of Theorem \ref{theo:Principal}. Picking any $\theta \in (0,+\infty )$, we infer from \eqref{A1} that    \eqref{eq:decroissanceL2}
holds for $t\in [0, 1 + (1+\kappa ) L/c]$ for some constant $M>0$. Increasing $M$ if needed, we see that  \eqref{eq:decroissance} holds as well for $t\in [0, 1 + (1+\kappa ) L/c ]$, by using 
Lemma \ref{lem1}. Assume now that $t> 1 +  ( 1+\kappa ) L/c$. We pick $\theta:= L^{-1} \ln [ (2c)/(\tilde C \epsilon L)]$, so that, for $0<\epsilon<\epsilon _0$, we have
$h(\theta )=-C_\epsilon \sim -\frac{c}{L} \ln \epsilon ^{-1}$. 
It follows that 
\begin{eqnarray*}
\int_0^L [U^2(t,x)+V^2(t,x)]dx &\le& \int_0^L [e^{\theta (L-x)} U^2(t,x) + e^{\theta x} V^2(t,x)]dx  \\
&\le& e^{\theta L} {\mathcal L} (t) \\
&\le& e^{\theta L} e^{-C_\epsilon t} {\mathcal L} (0)  \\
&\le&\frac{2c}{\tilde C \epsilon L} e^{-C_\epsilon t} \left( \int_0^L  [ U^2_0(x) +V^2_0(x)] \, dx + \tL _\theta ( Y_l (0) , Y_r(0) ) \right) . 
\end{eqnarray*}
But we have 
\[
| Y_l(0) | =| u_0(0) - \bar u | = |u_0 (0)  -u_\epsilon (0)|\le \delta 
\]
and similarly $|Y_r(0)|\le \delta$, so that 
\begin{eqnarray*}
\tilde L_\theta ( Y_l (0) , Y_r(0) )  &\le&
 \frac{\tilde C}{K(\gamma +2)} \delta ^\gamma \exp( \theta\displaystyle \frac{c\delta ^\gamma}{K\gamma} ) 
\Vert (u_0-\bar u_\epsilon, v_0 - \bar v_\epsilon )\Vert ^2_{L^\infty (0,L)}  \\
&\le& 
\frac{\tilde C}{K(\gamma +2)} \delta ^\gamma \exp( \displaystyle \frac{c\delta ^\gamma}{KL\gamma}  \ln ( \frac{2c}{\tilde C \epsilon L} ) )
\Vert (u_0-\bar u_\epsilon, v_0-\bar v_\epsilon )\Vert ^2_{L^\infty (0,L) }  \\
&\le& \frac{Const}{\epsilon ^\kappa} 
\Vert (u_0-\bar u_\epsilon, v_0-\bar v_\epsilon )\Vert ^2_{L^\infty (0,L) } \cdot
\end{eqnarray*}
It follows that 
\[
\Vert ( u-u_\epsilon , v-v_\epsilon ) (t) \Vert ^2_{L^2(0,L)}   \le Const \frac{e^{- C_\epsilon t}}{\epsilon ^{1+\kappa}} 
\Vert (u_0-\bar u_\epsilon, v_0-\bar v_\epsilon )\Vert ^2_{L^\infty (0,L) } \cdot
\]
Thus the estimate  \eqref{eq:decroissanceL2} holds for $t> 1 + (1+\kappa ) L/c$. Finally,  \eqref{eq:decroissance} follows from  \eqref{eq:decroissanceL2} 
and \eqref{L1}. The proof of Theorem \ref{theo:Principal} is complete.

\section{Conclusion}
In this paper, we have shown the exponential decay of the
$L^2$-norm and of the $L^\infty$-norm
of solutions of a quasilinear hyperbolic system of balance laws with 
sufficiently
small source terms and appropriately chosen boundary controls. In fact,  
with vanishing source terms the decay rates become arbitrarily 
large, and thus in the limit, the case of finite-time stability is recovered. 
Since we work with solutions that are at each time $t\geq 0$ in
the space $W^{1,\infty}(0,\, L)$ with respect to the space variable,
the question arises whether also
the $L^\infty$-norm of the space derivative
of the solution decays exponentially.
This question will be the subject of future investigations.

\section{Appendix}
\subsection{Proof of Theorem \ref{thm1}}
We can find some constants $N\ge 0$ and $\omega\in \R$ such that $\Vert e^{tA}\Vert _{ {\mathcal L} (H) } \le Ne^{\omega t}$ for all $t\ge 0$.   
On the other hand, it is well known that $A+\epsilon B$ generates also a strongly continuous semigroup in $H$. For any $u_0\in H$, the (mild) 
solution of the Cauchy problem 
\begin{equation}
\label{UUU}
\partial _t u =Au+\epsilon Bu,\quad u(0)=u_0
\end{equation}
is denoted by $u(t)=e^{t(A+\epsilon B)}u_0$, and it is the solution in $C^0 ( \R ^+  , H)$ of the Duhamel integral equation
\begin{equation}
u(t)=e^{tA}u_0 + \int_0^t e^{ (t-s)A } [\epsilon B u(s) ]ds, \quad \forall t\ge 0.   
\end{equation}
Pick any $u_0\in H$. For given $\lambda \in (0,+\infty )$, introduce the Banach space
\[
E=\{ u\in C^0( \R ^+ , H); \ \Vert u\Vert _E:=\sup_{t\ge 0} \Vert e^{\lambda t} u(t)\Vert _H<+\infty \} .  
\]
For any $u\in E$, we define a function $v: \R ^+ \to H$ by 
\begin{equation}
\label{F1}
v(t) = e^{tA}u_0 + \int_0^t e^{ (t-s)A } [\epsilon B u(s) ]ds, \quad \forall t\ge 0.   
\end{equation}
Let $\Gamma (u)=v$. We aim to show that for $\epsilon \in (0,\epsilon _0)$ (with $\epsilon_0$ small enough) and $\lambda >0$ conveniently chosen, 
$\Gamma$ has a unique fixed-point in $E$.
First, we note that $\Gamma$ maps the space $E$ into itself. Indeed, for $t\in [0,T]$ we have 
\[
e^{\lambda t} \Vert v(t)\Vert \le e^{\lambda T} N \sup (1, e^{\omega T}) \left(  \Vert u_0\Vert + \epsilon _0 T  \Vert B\Vert _{ {\mathcal L} (H)} \Vert u\Vert _E   \right) <\infty,
\]
and for $t\ge T$, we have (using the fact that $e^{sA}w=0$ for all $s\ge T$ and all $w\in H$)
\begin{eqnarray*}
e^{\lambda t} \Vert v(t)\Vert 
&\le&  \int_{t-T}^t e^{\lambda t} \Vert e^{{(t-s)}A }\epsilon Bu(s)\Vert ds\\
&\le& N\sup (1, e^{\omega T})  \epsilon \Vert B\Vert  _{ {\mathcal L} (H)}  \Vert u\Vert _E \int_{t-T}^t e^{\lambda (t-s)}ds\\
&\le& N\sup (1, e^{\omega T})  \epsilon \Vert B\Vert  _{ {\mathcal L} (H)} \Vert u\Vert _E \frac{e^{\lambda T} -1}{\lambda}  <\infty. 
\end{eqnarray*}
Let us show now that $\Gamma$ contracts in $E$. Pick any $u_1,u_2\in E$, and let us denote $v_1=\Gamma (u_1)$, $v_2=\Gamma (u_2)$. Then we have 
for all $t\ge 0$ 
\begin{eqnarray*}
e^{\lambda t} \Vert v_1(t) -v_2(t) \Vert 
&\le&  \int_{\sup (0,t-T)}^t e^{\lambda t} \Vert e^{{(t-s)}A }\epsilon B (u_1(s) -u_2(s))\Vert ds\\
&\le& N\sup (1, e^{\omega T})  \epsilon \Vert B\Vert  _{ {\mathcal L} (H)}\frac{e^{\lambda T} -1}{\lambda}  \Vert u_1-u_2\Vert _E 
\end{eqnarray*}
 so that $\Gamma$ contracts for any given $\lambda >0$ if $\epsilon \ll 1$. Then by the contraction mapping theorem, $\Gamma $ has a unique fixed-point in $E$ which 
 is nothing but the mild solution of \eqref{UUU}. 
 
Now, pick $\lambda >0$ of the form $\lambda=C\ln \epsilon ^{-1}$. Then $\Gamma$ contracts in $E$ if 
 $k:= N\sup (1, e^{\omega T})  \epsilon \Vert B\Vert  _{ {\mathcal L} (H)} \frac{e^{\lambda T}}{\lambda} <1$, which becomes
\[
k=\epsilon ^{1-CT} \frac{N\sup (1, e^{\omega T}) \Vert B\Vert  _{ {\mathcal L} (H)}}{C\ln \epsilon ^{-1}} <1. 
\]  
This holds if $C<1/T$ and $\epsilon \in (0,\epsilon _0)$ with $\epsilon _0>0$ small enough. On the other hand, we have that 
\[
\Vert u(t) -e^{tA}u_0\Vert _E \le (1-k)^{-1} \Vert e^{tA}u_0-\Gamma (e^{tA}u_0)\Vert _E \le (1-k)^{-1}k\Vert u_0\Vert   
\]
ant that 
\[
\Vert e^{ tA } u_0\Vert _E \le N\sup (1, e^{\omega T}) e^{\lambda T}  \le Const \frac{\Vert u_0\Vert} {\epsilon}\cdot
\] 
It follows that $\Vert u\Vert _E \le Const\,  \Vert u_0\Vert /\epsilon $, i.e. $\Vert u(t)\Vert \le Const\,  e^{-(C\ln \epsilon ^{-1})t} 
\Vert u_0\Vert /\epsilon$ for all $t\ge 0$.  On the other hand, 
\[
\sup_{t\ge 0} \Vert u(t)\Vert \le \sup _{t\ge 0} \Vert u(t) -e^{tA} u_0\Vert + \sup_{t\ge 0} \Vert e^{tA} u_0\Vert 
\le [(1-k)^{-1} k + N \sup (1, e^{ \omega T} ) ] \Vert u_0\Vert . 
\] 
The proof of Theorem \ref{thm1} is complete.

\subsection{Proof of Theorem \ref{thm2}}
To prove \eqref{DD1}, it is sufficient to find an eigenvalue $\lambda$ of $A+\epsilon B$ of the form 
\[
\lambda \sim  -\frac{c}{L} \ln \epsilon ^{-1}. 
\]
Thus, we investigate the following spectral problem
\begin{eqnarray}
\lambda u + c \partial _x u &=& \epsilon v, \label{DDD1}\\
\lambda v - c \partial _x v &=& \epsilon u , \label{DDD2}\\ 
u(0)=v(L) &=& 0. \label{DDD222}
\end{eqnarray}
Differentiating with respect to $x$ in \eqref{DDD1}, replacing $\partial _x v$ by its expression in \eqref{DDD2} and next $v$ by its expression in \eqref{DDD1}, we arrive to the following ODE for $u$
\begin{equation}
\label{DDD3}
\partial _x ^2 u= \frac{\lambda ^2 -\epsilon ^2}{c^2} u. \\
\end{equation}
Similarly, we see that $v$ solves the ODE
\begin{equation}
\label{DDD4}
\partial _x ^2 v= \frac{\lambda ^2 -\epsilon ^2}{c^2} v. \\
\end{equation}
Let $\alpha\in \C$ be such that 
\begin{equation}
\label{DDD5}
\alpha ^2 = \frac{\lambda ^2 -\epsilon ^2}{c^2}\cdot 
\end{equation}
Then $u$ and $v$ can be written as 
\begin{equation}
\label{DDD6}
u(x)=Ae^{\alpha x} + Be^{-\alpha x}, \quad u(x)=Ce^{\alpha x} + De^{-\alpha x}
\end{equation}
for some constants $A,B,C,D\in \C$. 
Plugging the expressions of $u$ and $v$ in \eqref{DDD1}-\eqref{DDD2} yields the system
\[
\left\{ 
\begin{array}{ll}
\lambda (A e^{\alpha x} + B e^{-\alpha x} ) + c(A\alpha e^{\alpha x} -B\alpha e^{-\alpha x}) 
=\epsilon ( C e^{\alpha x} + D e^{-\alpha x}) ,\\
\lambda (C e^{\alpha x} + D e^{-\alpha x} ) - c(C \alpha  e^{\alpha x} -D\alpha e^{-\alpha x}) 
=\epsilon ( A e^{\alpha x} + B e^{-\alpha x}) ,\\
\end{array}
\right. 
\] 
which is satisfied if 
\begin{eqnarray}
(\lambda +c\alpha ) A &=& \epsilon C, \label{DDD11}\\
(\lambda -c\alpha ) B &=& \epsilon D, \label{DDD12}\\
(\lambda -c\alpha ) C &=& \epsilon A, \label{DDD13}\\
(\lambda +c\alpha ) D &=& \epsilon B.\label{DDD14}
\end{eqnarray}
Now, using \eqref{DDD5}, we see that \eqref{DDD12}  and \eqref{DDD13}  follow respectively from \eqref{DDD14} and
\eqref{DDD11},  so that \eqref{DDD11}-\eqref{DDD14} is equivalent to 
\[
C=\frac{\lambda + c\alpha} {\epsilon} A, \quad B=\frac{\lambda + c\alpha}{\epsilon} D. 
\] 
Setting 
\begin{equation}
\label{DDD21}
\mu = \frac{\lambda +c\alpha}{\epsilon}, 
\end{equation}
we wee that the solutions of \eqref{DDD1}-\eqref{DDD2} are the functions of the form
\[
u(x)=Ae^{\alpha x} + \mu De^{-\alpha x} , \quad  v(x)= \mu A e^{\alpha x} + D e^{-\alpha x},
\]
where $A,D\in \C$ are arbitrary. The boundary conditions \eqref{DDD222} yield the following constraints 
for $A$ and $D$:
\[
\left\{ 
\begin{array}{l}
A+\mu D=0, \\
\mu A e^{\alpha L} + D e^{-\alpha L}=0. 
\end{array}
\right.
\] 
The above system admits some nontrivial solutions if and only if the determinant of the associated matrix is null:\begin{equation}
\label{DDD22}
e^{-\alpha L} -\mu ^2 e^{\alpha L}=0. 
\end{equation}
Gathering together \eqref{DDD5}, \eqref{DDD21} and \eqref{DDD22}, we conclude that $\lambda$ is an eigenvalue
of the operator $A+\epsilon B$  if 
there is some $\alpha\in\C$ such that 
\[
\left\{
\begin{array}{l}
\lambda ^2 -c^2 \alpha ^2=\epsilon ^2, \\
\left( \frac{\lambda + c\alpha }{\epsilon }\right) ^2 =  e^{-2 \alpha L} . 
\end{array}
\right. ,
\] 
or equivalently if there are some numbers $\alpha\in\C$ and $s\in \{ -1, 1\}$ such that 
\begin{eqnarray}
\frac{\lambda + c\alpha}{\epsilon}\cdot \frac{\lambda -c\alpha}{\epsilon} &=&1, \label{DDD31}\\
\frac{\lambda +c\alpha}{\epsilon} &=& s e ^{-\alpha L}. \label{DDD32}
\end{eqnarray}
This yields (using $s^{-1}=s$) 
\begin{equation}
\frac{\lambda -c\alpha}{\epsilon} =s e^{\alpha L} \label{DDD33}.
\end{equation} 
Eliminating $\lambda$ in \eqref{DDD32}-\eqref{DDD33}), we conclude that  system \eqref{DDD31}-\eqref{DDD32} is equivalent to the system
\begin{eqnarray}
\frac{\lambda +c\alpha}{\epsilon} &=&s e^{-\alpha L} , \label{EEE1}\\
\frac{\sinh (\alpha L)}{\alpha L} &=& -\frac{cs}{L\epsilon} \cdot \label{EEE2}
\end{eqnarray}
To complete the proof, it is sufficient to pick $s=-1$ and to limit ourselves to the solutions $\alpha \in \R ^+$ of the equation 
\begin{equation}
\label{EEE3}
\frac{\sinh (L\alpha )}{L\alpha} =\frac{c}{L\epsilon} \cdot 
\end{equation}
It is easily seen that the map $x\mapsto \sinh x/x$ is (strictly) increasing on $(0,+\infty )$ and onto $(1,+\infty )$, so that for any
$\epsilon \in (0, c/L)$ there is a unique $\alpha = \alpha (\epsilon)  \in (0,+\infty)$ satisfying \eqref{EEE3}. Moreover, the map 
$\epsilon \to  \alpha $ is decreasing and $\alpha \to +\infty$ as $\epsilon \to 0^+$. Finally, 
$e^{L\alpha} /(2L\alpha) \sim c/(L\epsilon )$ yields by taking the logarithm $L\alpha  \sim \ln\,  \epsilon ^{-1}$. This gives
by using again \eqref{EEE3}
\[
L\alpha = \ln \epsilon ^{-1} + O(\ln (\ln \epsilon ^{-1}) ) 
\]
so that 
\[
\lambda = -c\alpha + o(\epsilon ) = -\frac{c}{L} \ln \epsilon ^{-1} + O(\ln (\ln \epsilon ^{-1}))
> -\kappa \ln \epsilon ^{-1}
\]
if $\kappa >c/L$ and $0<\epsilon \ll 1$. \qed

\subsection{Transport equation}
\label{sec:Transport}
This part follows closely the Appendix of \cite{P}.
\begin{defi}
   Let  $a=a(t,x)$ be a Lipschitz continuous function on $\R^+\times
    \R$ such that 
    \begin{equation}
    \label{AP1}
    \underset{(t,x)\in \R^+\times \R}{\sup} \frac{|a(t,x)|}{1+|x|}<+\infty.
    \end{equation}
    For any pair $(t,x)\in \R^+\times \R$, we  denote by  
    $s \mapsto \phi_a(s,t,x)$
    the maximal solution to the following Cauchy problem
    $$
    \begin{cases}
        \dot{\theta}(s)=a(s,\theta(s)),\\
        \theta(t)=x,
    \end{cases}
    $$
   which is defined on $\R^+$ thanks to \eqref{AP1}. 
\end{defi}

\begin{prop}\label{prop:flot}
Assume that $a=a(t,x)$ satisfies \eqref{AP1} and is of class  $\Cc^1$ on $\R^+\times\R$.   Then the function $\phi_a$ is  of class $\Cc^1$ on $(\R ^+ )^2\times \R$, 
 and we have
   $$\forall (s,t,x)\in (\R^+)^2\times \R,\quad  
       \begin{cases}
           \partial_2 \phi(s,t,x)=-a(t,x)\exp\left(\int_t^s{\partial_2 a(r,\phi(r,t,x))dr}\right) ,\\
           \partial_3 \phi(s,t,x)=\exp\left(\int_t^s{\partial_2 a(r,\phi(r,t,x))dr}\right).
       \end{cases}
       $$
\end{prop}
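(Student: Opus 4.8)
The plan is to prove Proposition \ref{prop:flot} by the standard ODE toolbox: first establish $\Cc^1$ regularity of the flow via the theorem on differentiable dependence on initial conditions and parameters, and then identify the partial derivatives by deriving and solving the associated \emph{variational equation}. Throughout, the growth hypothesis \eqref{AP1} is what guarantees global existence of $\phi_a(s,t,x)$ on all of $\R^+$, so there is no issue with the domain of definition; only the regularity and the explicit formulas require work.

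\medskip
\noindent\textbf{Step 1: $\Cc^1$ regularity.} Since $a$ is $\Cc^1$ on $\R^+\times\R$, the map $(s,x_0)\mapsto \phi_a(s,t_0,x_0)$ is $\Cc^1$ in $(s,x_0)$ on its (here global) domain, for each fixed $t_0$; this is the classical smooth-dependence theorem. To also get $\Cc^1$ dependence on the ``initial time'' $t$, I would use the time-translation trick: writing $\theta(s)=\phi_a(s,t,x)$, one has $\phi_a(s,t,x)=\phi_a(s,t',\phi_a(t',t,x))$ for any $t'$, and $\phi_a(t',t,x)$ solves the same ODE backwards/forwards in the first slot, so differentiability in $t$ reduces to differentiability in the spatial slot already handled, composed with the (smooth) flow map. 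Alternatively, and perhaps more cleanly, augment the system by adjoining the trivial equation $\dot\tau = 1$ with $\tau(t)=t$, i.e. regard $t$ as an extra initial condition for an autonomous-in-form system on $\R^+\times\R$; then $\Cc^1$ dependence on $(t,x)$ is a single application of the spatial-regularity theorem for the augmented flow. Joint continuity in $(s,t,x)$ of all first partials then gives $\phi_a\in\Cc^1((\R^+)^2\times\R)$.

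\medskip
\noindent\textbf{Step 2: the formula for $\partial_3\phi$.} Fix $(t,x)$ and set $\psi(s):=\partial_3\phi(s,t,x)$. Differentiating the identity $\partial_s\phi(s,t,x)=a(s,\phi(s,t,x))$ with respect to $x$ and exchanging the order of the (continuous) partials gives the linear variational equation
\begin{equation}
\dot\psi(s)=\partial_2 a(s,\phi(s,t,x))\,\psi(s),\qquad \psi(t)=\partial_3\phi(t,t,x)=1,
\end{equation}
the initial value coming from $\phi(t,t,x)=x$. Integrating this scalar linear ODE yields
$$\partial_3\phi(s,t,x)=\exp\Big(\int_t^s \partial_2 a(r,\phi(r,t,x))\,dr\Big),$$
which is the second claimed formula.

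\medskip
\noindent\textbf{Step 3: the formula for $\partial_2\phi$.} Similarly set $\chi(s):=\partial_2\phi(s,t,x)$ and differentiate $\partial_s\phi(s,t,x)=a(s,\phi(s,t,x))$ in $t$; this gives the same homogeneous linear equation $\dot\chi(s)=\partial_2 a(s,\phi(s,t,x))\,\chi(s)$, but now with a different initial condition. To compute $\chi(t)=\partial_2\phi(t,t,x)$, differentiate the identity $\phi(t,t,x)=x$ with respect to $t$: the total derivative vanishes, so $\partial_1\phi(t,t,x)+\partial_2\phi(t,t,x)=0$, whence $\partial_2\phi(t,t,x)=-\partial_1\phi(t,t,x)=-a(t,\phi(t,t,x))=-a(t,x)$. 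Solving the linear ODE with this data gives
$$\partial_2\phi(s,t,x)=-a(t,x)\exp\Big(\int_t^s \partial_2 a(r,\phi(r,t,x))\,dr\Big),$$
as asserted. (As a sanity check, this is consistent with $\partial_2\phi = -a(t,x)\,\partial_3\phi$, which also follows directly from $\phi(s,t,x)=\phi(s,t',\phi(t',t,x))$ by differentiating in $t$ and using the group property.)

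\medskip
\noindent The main obstacle is really just Step 1 — producing the $\Cc^1$ regularity in \emph{all three} variables rigorously, and in particular the joint continuity of the partials needed to justify swapping $\partial_s$ with $\partial_x$ and $\partial_t$ in Steps 2--3; once regularity is in hand, the variational-equation computations are routine, with the only subtle point being the correct initial value $\partial_2\phi(t,t,x)=-a(t,x)$ obtained from differentiating $\phi(t,t,x)=x$.
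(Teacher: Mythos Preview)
Your proposal is correct and follows essentially the same route as the paper: both arguments establish $\Cc^1$ regularity of the flow and then identify $\partial_2\phi$ and $\partial_3\phi$ as solutions of the scalar linear variational equation $\dot w(s)=\partial_2 a(s,\phi(s,t,x))\,w(s)$ with the appropriate initial data. The only cosmetic difference is that the paper works from the integral form $\phi(s,t,x)=x+\int_t^s a(r,\phi(r,t,x))\,dr$, invoking the implicit function theorem for regularity and obtaining the initial value $-a(t,x)$ directly by differentiating the lower limit of the integral, whereas you quote the smooth-dependence theorem and recover the same initial value by differentiating $\phi(t,t,x)=x$; the content is identical.
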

\begin{proof}
    Using the integral form of the differential equation, we obtain
    \begin{equation}
    \label{PP1}
    \forall (s,t,x)\in (\R^+) ^2\times \R,\qquad \phi(s,t,x)=x+\int_t^s{a(r,\phi(r,t,x))dr}.
    \end{equation}
    We infer from the implicit function theorem (proceeding as in Section \ref{sec:Stationnaires}) that
    $\phi$ is of class $\Cc^1$ on $(\R ^+)^2\times \R$. On the other hand, differentiating in \eqref{PP1}  yields
    $$\forall (s,t,x)\in (\R^+)^2\times \R,\qquad \partial_2 \phi(s,t,x)=-a(t,\phi(t,t,x))  +   \int_t^s{\partial_2
        a(r,\phi(r,t,x))\partial_2 \phi(r,t,x) dr}.$$
        
    Noticing that the last equation can be viewed as a linear ODE and using the fact that $a(t,\phi(t,t,x))=a(t,x)$, we obtain the
    desired formula for $\partial _2 \phi$. The other one for $\partial _3\phi$ is proven in the same way.
\end{proof}
 
\begin{rema}
    Is should be noted that
    $$\forall (s,t,x)\in (\R^+)^2\times \R,\qquad \partial_t\phi(s,t,x)+a(t,x)\partial_x \phi(s,t,x)=0.$$
\end{rema}
\begin{defi}
    Given a positive real number  $L$, we introduce for any  $(t,x)\in \R^+\times [0,L]$ the sets 
    \begin{eqnarray*}
    F^- &:=& \{r\in [0,t]; \ \forall s\in [r,t],\quad \phi(s,t,x)\in [0,L]\}, \\
    F^+ &:=& \{r\in [t,+\infty ); \ \forall s\in [t,r],\quad \phi(s,t,x)\in [0,L]\}.
    \end{eqnarray*}
    We also set
   $$e(t,x):=\inf F^-. 
 $$
\end{defi}
\begin{prop}\label{prop:temps}
    Assume that $a$ is a function of class  $\Cc^1$ and that there exists $c>0$ such that
    \begin{equation}
    \label{PP2}
    \forall (t,x)\in [0,+\infty )  \times [0,L],\qquad a(t,x)\geq c. 
    \end{equation}
    Then the function $e$ is of class  $\Cc^1$ on the two open sets 
    \begin{eqnarray*}
    G &:=& \{(t,x)\in (0,T)\times (0,L); \ x<\phi(t,0,0)\} , \\
    I &:=& \{(t,x)\in (0,T)\times (0,L); \ x>\phi(t,0,0)\}.
    \end{eqnarray*} 
    Furthermore, it holds
    $$
    \forall (t,x)\in G,\qquad 
    \begin{cases}
        \partial_t e(t,x)=  -   \frac{a(t,x)}{a(e(t,x),0)}\exp \left(-\int_{e(t,x)}^t{\partial_2 a(r,\phi(r,t,x))dr}\right) ,\\
        \partial_x e(t,x)=-\frac{1}{a(e(t,x),0)}\exp \left(-\int_{e(t,x)}^t{\partial_2 a(r,\phi(r,t,x))dr}\right) ,
    \end{cases}
    $$
    while
     $$\forall (t,x)\in I,\qquad e(t,x)=0.$$
    The set $G$ coincide with the set of pairs $(t,x)\in (0,T)\times (0,L)$ such that 
    $$e(t,x)>0.$$
\end{prop}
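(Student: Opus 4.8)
The plan is to read $e(t,x)$ as the \emph{entry time} of the backward characteristic through $(t,x)$: following $s\mapsto\phi(s,t,x)$ from $s=t$ down to $s=0$, this curve either meets the left endpoint $x=0$ at a positive time, which is then $e(t,x)$, or it stays in $[0,L]$ all the way to $s=0$, in which case $e(t,x)=0$. Two elementary facts drive everything. First, by \eqref{PP2}, whenever $\phi(s,t,x)\in[0,L]$ one has $\partial_1\phi(s,t,x)=a(s,\phi(s,t,x))\geq c>0$, so $s\mapsto\phi(s,t,x)$ is strictly increasing with slope $\geq c$ throughout the strip; in particular a characteristic touching a vertical side of $[0,T]\times[0,L]$ from the interior crosses it transversally. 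Second, two solution curves of $\dot\theta=a(s,\theta)$ cannot cross, by uniqueness for this $\Cc^1$ ODE. The curve $s\mapsto\phi(s,0,0)$ issued from the corner $(0,0)$ is the separatrix: by non-crossing, $x-\phi(t,0,0)$ keeps a constant sign along each characteristic, which is exactly why $G$ and $I$ are cut out by this sign; both are open because $(t,x)\mapsto\phi(t,0,0)$ is continuous.

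First I would dispose of $I$ and the separatrix, i.e. the case $x\geq\phi(t,0,0)$. By non-crossing, $\phi(s,t,x)\geq\phi(s,0,0)$ for all $s\in[0,t]$; since $\phi(\cdot,0,0)$ starts at $0$ and, by the transversality remark, can never return to $0$ from above, one has $\phi(s,0,0)\geq 0$, hence $\phi(s,t,x)\geq 0$ on $[0,t]$. Likewise $\phi(\cdot,t,x)$ cannot exceed $L$ on $[0,t]$ (it would have to re-enter $[0,L]$ from above, contradicting transversality), so $\phi(s,t,x)\in[0,L]$ for every $s\in[0,t]$, i.e. $[0,t]\subset F^-$ and $e(t,x)=0$. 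This proves $e\equiv 0$ on $I$ (so $e$ is $\Cc^1$ there) and gives $\{e>0\}\subset G$. For the converse inclusion: if $(t,x)\in G$ then $\phi(s,t,x)<\phi(s,0,0)$ for all $s$, so $\phi(0,t,x)<\phi(0,0,0)=0$, whence $0\notin F^-$ and $e(t,x)>0$. Therefore $G=\{(t,x)\in(0,T)\times(0,L):\ e(t,x)>0\}$.

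Next I would turn to $G$. Since $e(t,x)>0$, the curve $\phi(\cdot,t,x)$ lies in $[0,L]$ just above $s=e(t,x)$ but leaves it immediately below; being strictly increasing in the strip, it satisfies $\phi(e(t,x),t,x)<x<L$, so the constraint lost just below $e(t,x)$ cannot be $\phi\leq L$, and hence $\phi(e(t,x),t,x)=0$, with $e(t,x)$ the \emph{unique} zero of $s\mapsto\phi(s,t,x)$ in $[0,t]$ by strict monotonicity. Now apply the implicit function theorem to $(r,t,x)\mapsto\phi(r,t,x)$, which is $\Cc^1$ by Proposition~\ref{prop:flot}: at any point of $G$ it vanishes and $\partial_1\phi(e(t,x),t,x)=a(e(t,x),0)\geq c>0$, so $\{\phi=0\}$ is locally the graph of a $\Cc^1$ function $r=\tilde e(t,x)$. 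A short continuity argument for $e$ — any subsequential limit of $e$ at a point of $G$ is a zero of $\phi(\cdot,t,x)$ distinct from the negative value $\phi(0,t,x)$, hence equals $e$ by uniqueness — identifies $e$ with $\tilde e$ locally, so $e\in\Cc^1(G)$. Differentiating the identity $\phi(e(t,x),t,x)\equiv 0$ with respect to $t$ and to $x$ gives $a(e,0)\,\partial_t e+\partial_2\phi(e,t,x)=0$ and $a(e,0)\,\partial_x e+\partial_3\phi(e,t,x)=0$, and substituting the closed forms for $\partial_2\phi,\partial_3\phi$ from Proposition~\ref{prop:flot} (with $\partial_1\phi(e,t,x)=a(e,0)$) yields the announced expressions for $\partial_t e$ and $\partial_x e$.

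The computations themselves are routine once the geometric picture is fixed; the real crux is the comparison/transversality bookkeeping that ensures the characteristics involved stay inside $[0,L]$ on the time window where they are invoked, so that the bound $a\geq c$ from \eqref{PP2} is legitimately available, and that is what simultaneously identifies $G$, $I$ and $\{e>0\}$ and justifies $\phi(e(t,x),t,x)=0$. A convenient way to package this is to note that, backward from any $(t,x)$ with $x\in(0,L)$, $\phi(\cdot,t,x)$ decreases at rate $\geq c$ as long as it stays in $(0,L)$, so it reaches $0$ within time $x/c$ unless $s=0$ is attained first; this at once gives $e(t,x)\in[\max(0,t-L/c),\,t)$ on $G$, the well-definedness of $e$, and the case split underlying the statement.
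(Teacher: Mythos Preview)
Your proposal is correct and follows essentially the same route as the paper: on $G$, identify $e(t,x)$ as the (local) solution of $\phi(e,t,x)=0$, apply the implicit function theorem using $\partial_1\phi(e,t,x)=a(e,0)\geq c>0$, and differentiate with the formulas from Proposition~\ref{prop:flot}. The paper's own proof is only a few lines and handles just this core computation; it does not spell out the case $I$, the identification $G=\{e>0\}$, or the comparison/transversality bookkeeping you carefully supply to justify $\phi(e(t,x),t,x)=0$ --- your write-up is a strict elaboration of the same argument.
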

\begin{rema}
    The following holds
    $$\forall (t,x)\in (0,T)\times (0,L) ,\qquad  x\neq \phi(t,0,0)\Rightarrow \partial_t e(t,x)+a(t,x)\partial_x e(t,x)=0.$$
    Since the propagation speed $a(t,x)$ is greater than $c$, it should be clear that
    $$\forall (t,x)\in \R^+\times [0,L],\qquad 0\leq t-e(t,x)\leq \frac{L}{c} \cdot$$
\end{rema}
\begin{proof}
    Using \eqref{PP2}, 
    we see that for $(t,x)\in G$, $e(t,x)$ is the only solution (at least locally) of the equation
    \begin{equation}
    \label{PP3}
    \phi(e(t,x),t,x)=0.
    \end{equation}
    An application of the implicit function theorem gives  that the function $e$ is of class $\Cc^1$.  Taking partial derivatives in \eqref{PP3} yields
    $$\partial_t e(t,x)=-\frac{\partial_2
        \phi(e(t,x),t,x)}{\partial_1 \phi (e(t,x),t,x)}=
        - \frac{-a(t,x)\exp\left(\int_t^{e(t,x)}{\partial_2
            a(r,\phi(r,t,x))dr}\right)}{a(e(t,x),0)}$$
            and 
    $$\partial_x e(t,x)=-\frac{\partial_3 \phi(e(t,x),t,x)}{\partial_1
        \phi(e(t,x),t,x)}=-\frac{\exp\left(\int_t^{e(t,x)}{\partial_2
                a(r,\phi(r,t,x))dr}\right)}{a(e(t,x),0)} \cdot$$
\end{proof}
Let us now consider the system
\begin{equation}\label{eq:transport}
    \begin{cases}
        \partial_t y+a(t,x)\partial_x y=b(t,x),\\
        y(t,0)=y_l(t),\\
        y(0,x)=y_0(x)
    \end{cases}\qquad (t,x)\in (0,T)\times (0,L).
\end{equation}
We say that a function $y\in \textrm{Lip} ( [0,T]\times [0,L])$   is a {\em strong solution} of \eqref{eq:transport} if the first equation in \eqref{eq:transport} holds almost everywhere and if
the second and third equations in \eqref{eq:transport} hold everywhere. 
We shall need also to introduce the concept of {\em weak solution} of \eqref{eq:transport}, following \cite{P}. 
\begin{defi}
\label{defi7}
    We say that $y\in \Cc ^0 ([0,T]\times [0,L])$ is a {\em weak solution} of \eqref{eq:transport} if
  for any function $\psi\in \Cc^1([0,T]\times [0,L])$ satisfying
    \begin{eqnarray*}
    \forall t \in [0,T],\quad \psi(t,L) &=& 0,\\
     \forall x\in [0,L],\quad \psi(T,x) &=&0,
     \end{eqnarray*}
    it holds
    \begin{multline}
    \int_0^T{\int_0^L{ [y(t,x)\bigg(  \partial_t \psi(t,x) + a(t,x)\partial_x
                \psi(t,x)+\partial_x a(t,x)\psi(t,x)\bigg) +b(t,x)
            \psi(t,x)  ] dx} dt}\\
    +\int_0^T   { a(t,0)        y_l(t)\psi(t,0)dt}+\int_0^L{y_0(x)\psi(0,x)dx}=0.
\end{multline}
\end{defi}
Then we have the following result. 
\begin{prop} 
\label{prop10}
1. Let $y\in \textrm{Lip} ([0,T]\times [0,L])$. Then $y$ is a strong solution of \eqref{eq:transport} if and only if $y$ is a weak solution of 
\eqref{eq:transport}.\\
2. If $a$, $b$, $y_l$ and $y_0$ are 
            Lipschitz continuous functions, then 
there is at most one weak solution of \eqref{eq:transport}.
\end{prop}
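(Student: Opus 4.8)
The two assertions are logically independent and I would prove them in turn: Part~1 by the integration-by-parts identity made legitimate by Lipschitz regularity, and Part~2 by a duality argument based on solving the backward adjoint transport problem.

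\textbf{Part 1 (strong $\Leftrightarrow$ weak).} If $y\in\mathrm{Lip}([0,T]\times[0,L])$ and $\psi\in\Cc^1([0,T]\times[0,L])$ satisfies $\psi(t,L)=0$ and $\psi(T,x)=0$, then Fubini together with the fundamental theorem of calculus for absolutely continuous functions yields
\begin{multline*}
\int_0^T\!\!\!\int_0^L(\partial_t y+a\,\partial_x y)\,\psi\,dx\,dt=-\int_0^T\!\!\!\int_0^L y\,\big(\partial_t\psi+a\,\partial_x\psi+\partial_x a\,\psi\big)\,dx\,dt\\
-\int_0^L y(0,x)\psi(0,x)\,dx-\int_0^T a(t,0)\,y(t,0)\,\psi(t,0)\,dt,
\end{multline*}
the boundary contributions at $t=T$ and $x=L$ dropping out by the support hypotheses on $\psi$. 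If $y$ is a strong solution, inserting $\partial_t y+a\,\partial_x y=b$, $y(t,0)=y_l(t)$ and $y(0,x)=y_0(x)$ turns this into exactly the identity of Definition~\ref{defi7}. Conversely, if $y$ is a weak solution, the same identity gives
\[
\int_0^T\!\!\!\int_0^L(\partial_t y+a\,\partial_x y-b)\,\psi\,dx\,dt=\int_0^L\big(y_0-y(0,\cdot)\big)\psi(0,x)\,dx+\int_0^T a(t,0)\big(y_l-y(t,0)\big)\psi(t,0)\,dt
\]
for every admissible $\psi$. Testing with $\psi\in\Cc^\infty_c((0,T)\times(0,L))$ kills the right-hand side, and since the bracket on the left lies in $L^\infty$, the fundamental lemma of the calculus of variations forces $\partial_t y+a\,\partial_x y=b$ a.e.; with that in hand, testing with $\psi(t,x)=\eta(t)\rho(x)$, $\rho\in\Cc^1_c((0,L))$, $\eta\in\Cc^1_c([0,T))$, $\eta(0)=1$, gives $y(0,\cdot)=y_0$ (both sides being continuous), and finally testing with $\psi(t,x)=\eta(t)\zeta(x)$, $\zeta\in\Cc^1([0,L])$, $\zeta(0)=1$, $\zeta(L)=0$, $\eta(T)=0$, together with $a(\cdot,0)\ge c>0$, gives $y(t,0)=y_l(t)$ for all $t$. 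Hence $y$ is a strong solution.

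\textbf{Part 2 (uniqueness).} By linearity, the difference $w:=y^1-y^2$ of two weak solutions with the same data is a continuous weak solution with $b\equiv0$, $y_l\equiv0$, $y_0\equiv0$, i.e.
\[
\int_0^T\!\!\!\int_0^L w\,\big(\partial_t\psi+\partial_x(a\psi)\big)\,dx\,dt=0
\]
for all $\psi\in\Cc^1([0,T]\times[0,L])$ with $\psi(t,L)=\psi(T,x)=0$; it suffices to show $w\equiv0$. Fix $g\in\Cc^1_c((0,T)\times(0,L))$. I would solve the backward adjoint problem $\partial_t\psi+a\,\partial_x\psi+\partial_x a\,\psi=g$ with $\psi(T,\cdot)=0$ and $\psi(\cdot,L)=0$. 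Since $a\ge c>0$, the forward characteristics $s\mapsto\phi_a(s,t,x)$ run strictly to the right, so each of them leaves $[0,T]\times[0,L]$ through $\{t=T\}$ or $\{x=L\}$ — precisely the set carrying data — whence the problem is well posed when integrated backward in time; the method of characteristics, supplemented by a Picard iteration to absorb the zeroth-order term $\partial_x a\,\psi$ (as in the construction underlying Theorem~\ref{theo:TransportSolution}), produces $\psi$, and one checks that $\psi\in\mathrm{Lip}([0,T]\times[0,L])$ with $\psi(t,L)=\psi(T,x)=0$, using the Lipschitz dependence of the flow $\phi_a$ and of the exit time on $(t,x)$ — the latter resting on the transversality estimate $a\ge c$, exactly as in Proposition~\ref{prop:temps}. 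After extending, by a routine mollification-and-cutoff argument, the validity of the identity of Definition~\ref{defi7} to Lipschitz test functions (the integrand is linear in $(\psi,\partial_t\psi,\partial_x\psi)$ and uniformly bounded, so dominated convergence applies), I may insert this $\psi$ to obtain $0=\int_0^T\!\int_0^L w\,(\partial_t\psi+\partial_x(a\psi))\,dx\,dt=\int_0^T\!\int_0^L w\,g\,dx\,dt$. As $g$ is arbitrary and $w$ continuous, $w\equiv0$, i.e. $y^1=y^2$.

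\textbf{Main obstacle.} Part~1 is essentially bookkeeping. The real difficulty lies in Part~2: producing a test function of adequate regularity for the adjoint equation when $a$ is merely Lipschitz, so that $\partial_x a$ is only bounded and a $\Cc^1$ solution cannot be expected. My plan is to handle this by (i) carrying out the characteristic construction carefully enough to see that the adjoint solution is at least Lipschitz, with quantitative control coming from Gronwall estimates on $\phi_a$ and from $a\ge c$, and (ii) enlarging the class of admissible test functions in Definition~\ref{defi7} to Lipschitz functions, which costs nothing. This is exactly the circle of ideas of the Appendix of \cite{P}.
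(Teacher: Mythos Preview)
Your proposal is correct and, in fact, considerably more detailed than the paper's own proof, which consists of the single sentence ``The first assertion follows from classical arguments. The second one is proven in the Appendix of \cite{P}.'' Your Part~1 is exactly the classical integration-by-parts computation the paper alludes to, and your Part~2 (duality via the backward adjoint problem) is the standard strategy and is, as you yourself note, ``exactly the circle of ideas of the Appendix of \cite{P}.''

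One technical point deserves a sharper look in Part~2. You propose to obtain $\psi\in\mathrm{Lip}$ by solving $\partial_t\psi+a\,\partial_x\psi=g-(\partial_x a)\psi$ via Picard iteration and invoking Theorem~\ref{theo:TransportSolution}. But Theorem~\ref{theo:TransportSolution} requires a \emph{Lipschitz} source term $b$, whereas here $(\partial_x a)\psi$ is merely in $L^\infty$ (since $\partial_x a$ is only bounded), so the iteration does not obviously close in the Lipschitz class; equivalently, the explicit characteristic formula $\psi(t,x)=-\int_t^{e^+(t,x)} g(s,\phi(s,t,x))\,\partial_x\phi(s,t,x)\,ds$ involves $\partial_x\phi$, which for Lipschitz $a$ is only $L^\infty$ in $x$, not continuous. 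You have correctly flagged this as the main obstacle, and the fix is indeed in \cite{P}: either regularize $a$ and pass to the limit with uniform estimates, or extend the test-function class as you suggest. Since the paper itself defers entirely to \cite{P} for this step, your sketch is already at least as complete as what the paper provides.
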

\begin{proof}
    The first assertion follows from classical arguments. The second one is proven in the
    Appendix of \cite{P}.
\end{proof}

\begin{theo}\label{theo:TransportSolutionRegu}
    Let  $a$, $b$, $y_l$ and $y_0$ be functions of class $\Cc ^1 $ such that 
    \begin{equation}\label{eq:compatibilite}
    y_l(0)=y_0(0).
    \end{equation}
    Then  the system \eqref{eq:transport} admits exactly one (strong or weak) solution, and it is given explicitly by the formula
    \begin{equation}\label{eq:formule}
        \forall (t,x)\in [0,T] \times [0,L],\qquad 
        y(t,x)=
        \left\{
        \begin{array}{ll}
       y_l(e(t,x))+\int_{e(t,x)}^t{b(r,\phi(r,t,x))dr} &\textrm{if } x<\phi(t,0,0),\\
       y_0(\phi(0,t,x))+\int_0^t{b(r,\phi(r,t,x))dr}  &\textrm{otherwise.}
        \end{array}
        \right.
    \end{equation}
\end{theo}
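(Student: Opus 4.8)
The plan is to construct the solution explicitly by the method of characteristics and then invoke the uniqueness already recorded in Proposition \ref{prop10}. First I would simply \emph{define} $y$ by the right-hand side of \eqref{eq:formule} and check that this definition is unambiguous: the two cases in \eqref{eq:formule} overlap exactly on the characteristic curve $\{x=\phi(t,0,0)\}$, where on one hand $e(t,x)=0$ (by the last assertion of Proposition \ref{prop:temps}) and on the other hand $\phi(0,t,x)=0$. The two expressions then reduce respectively to $y_l(0)+\int_0^t b(r,\phi(r,t,x))\,dr$ and $y_0(0)+\int_0^t b(r,\phi(r,t,x))\,dr$, which agree thanks to the compatibility condition \eqref{eq:compatibilite}.

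Next I would establish the regularity of $y$. On the open set $I$ of Proposition \ref{prop:temps} one has $y(t,x)=y_0(\phi(0,t,x))+\int_0^t b(r,\phi(r,t,x))\,dr$, which is of class $\Cc^1$ since $\phi$ is of class $\Cc^1$ by Proposition \ref{prop:flot} and one may differentiate under the integral sign; likewise $y$ is of class $\Cc^1$ on the open set $G$, using in addition that $e$ is of class $\Cc^1$ there (Proposition \ref{prop:temps}). The key point is that the explicit formulas of Propositions \ref{prop:flot} and \ref{prop:temps} for the partial derivatives of $\phi$ and $e$ stay bounded up to the separating curve $\{x=\phi(t,0,0)\}$: the only conceivable source of blow-up is the factor $1/a(e(t,x),0)$, which is harmless since $a\ge c>0$ by \eqref{PP2}. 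Hence $y$ is Lipschitz on each of $G$ and $I$ with a common Lipschitz constant, and continuous across their common boundary by the computation of the previous paragraph, so $y\in\textrm{Lip}([0,T]\times[0,L])$. (In general $y$ is only Lipschitz, not $\Cc^1$, along $\{x=\phi(t,0,0)\}$: matching the derivatives there would require the extra first-order compatibility $y_l'(0)+a(0,0)\,y_0'(0)=b(0,0)$, which is not assumed.)

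Then I would verify that $y$ solves the three equations of \eqref{eq:transport}. The boundary condition follows from $e(t,0)=t$ (because $\phi(t,t,0)=0$), which makes the integral term vanish and leaves $y(t,0)=y_l(t)$; the initial condition follows from $(0,x)\in I$ together with $\phi(0,0,x)=x$, which gives $y(0,x)=y_0(x)$. To check the PDE almost everywhere it suffices to check it on $G\cup I$, whose complement in the domain (the curve $\{x=\phi(t,0,0)\}$) has measure zero. On $I$, applying $\partial_t+a(t,x)\partial_x$ to $y=y_0(\phi(0,t,x))+\int_0^t b(r,\phi(r,t,x))\,dr$, the contribution coming from $y_0$ is annihilated because $\partial_t\phi(s,t,x)+a(t,x)\partial_x\phi(s,t,x)=0$ (Remark following Proposition \ref{prop:flot}), while differentiating the Duhamel term produces the boundary value $b(t,\phi(t,t,x))=b(t,x)$ plus an integral that vanishes for the same reason; hence $\partial_t y+a\partial_x y=b$. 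The computation on $G$ is identical, the term $y_l(e(t,x))$ being annihilated this time because $\partial_t e+a\partial_x e=0$ (Remark following Proposition \ref{prop:temps}).

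Finally, uniqueness is immediate: by part~1 of Proposition \ref{prop10} a Lipschitz function is a strong solution if and only if it is a weak solution, and by part~2 of Proposition \ref{prop10} (the data being $\Cc^1$, hence Lipschitz) there is at most one weak solution. Thus the function $y$ constructed above is the unique strong-or-weak solution of \eqref{eq:transport}, and it is given by \eqref{eq:formule}. I expect the main obstacle to be the regularity argument of the second paragraph, namely patching the two one-sided $\Cc^1$ representations into a single globally Lipschitz function across the characteristic curve issuing from the corner $(0,0)$, together with the observation that $\Cc^1$ regularity genuinely cannot be expected there.
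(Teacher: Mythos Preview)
Your proposal is correct and follows essentially the same approach as the paper's own proof: define $y$ by \eqref{eq:formule}, use Propositions~\ref{prop:flot} and~\ref{prop:temps} to obtain $\Cc^1$ regularity on $G$ and $I$ separately with bounded derivatives (hence global Lipschitz continuity, with only continuity across the characteristic from the corner), verify the PDE and the side conditions directly, and appeal to Proposition~\ref{prop10} for uniqueness. The paper compresses all of this into two sentences; your version simply spells out the details the paper leaves to the reader.
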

\begin{proof}
    Using Propositions \ref{prop:flot} and \ref{prop:temps}, it is straightforward to check
    that the function $y$ given by formula \eqref{eq:formule} is in $\textrm{Lip}([0,T]\times [0,L])$ and is of class $\Cc^1$, except possibly on the curve $x=\phi(t,0,0)$ (where
    it is likely merely continuous), and that it is a strong solution of \eqref{eq:transport}. On the other hand, the uniqueness of a weak
    solution of \eqref{eq:transport} follows from Proposition \ref{prop10}.  
\end{proof}
\begin{coro}\label{coro:estimeeTransport}
    The solution $y$ of \eqref{eq:transport} satisfies the estimates:
\begin{equation}\label{eq:estimationI}
    ||y||_\infty \leq \frac{L}{c} ||b||_\infty +\sup\left(||y_0||_\infty,||y_l||_\infty\right),
\end{equation}    
\begin{equation}\label{eq:estimationII}
   ||\partial_x y||_\infty \leq \frac{1}{c} \exp\left(\frac{L}{c}||\partial_2 a||_\infty\right) \sup \left(||b||_\infty
   + ||   y_l' 
   ||_\infty+L||\partial_2 b||_\infty,c||y_0'||_\infty+L||\partial_2 b||_\infty\right) ,
\end{equation}
and 
\begin{equation}\label{eq:estimationIII}
   ||\partial_t y||_\infty \leq  \frac{||a||_\infty}{c} \exp\left(\frac{L}{c}||\partial_2 a||_\infty\right) \sup \left(||b||_\infty+
   ||
    y_l' 
   ||_\infty+L||\partial_2 b||_\infty,c||y_0'||_\infty+L||\partial_2 b||_\infty\right)+||b||_\infty.
\end{equation}
\end{coro}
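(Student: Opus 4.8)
The plan is to read off all three bounds directly from the explicit representation \eqref{eq:formule}, combined with the formulas for the derivatives of the flow $\phi$ and of the entry time $e$ provided by Propositions \ref{prop:flot} and \ref{prop:temps}. The one quantitative fact that makes everything uniform is the travel‑time estimate recorded in the Remark following Proposition \ref{prop:temps}: since the speed $a$ is bounded below by $c$, the characteristic issued backward from any $(t,x)\in\R^+\times[0,L]$ remains in $[0,L]$ for a time at most $L/c$, so $0\le t-e(t,x)\le L/c$ on the set $G$, while on the complementary set $\{x>\phi(t,0,0)\}$ (where $e\equiv 0$) the identity $0\le\phi(0,t,x)=x-\int_0^t a(r,\phi(r,t,x))\,dr\le x-ct$ forces $t\le L/c$. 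Consequently every integral $\int_{e(t,x)}^t(\cdots)\,dr$ or $\int_0^t(\cdots)\,dr$ appearing below runs over an interval of length $\le L/c$, and every exponential weight $\exp(\pm\int\partial_2 a\,dr)$ along such an interval is dominated by $\exp(\tfrac{L}{c}||\partial_2 a||_\infty)$.

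Estimate \eqref{eq:estimationI} is then immediate: in each of the two branches of \eqref{eq:formule} one has $|y(t,x)|\le\sup(||y_0||_\infty,||y_l||_\infty)+\tfrac{L}{c}||b||_\infty$, and taking the supremum over $(t,x)$ gives the claim.

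For \eqref{eq:estimationII} I would differentiate \eqref{eq:formule} with respect to $x$ separately on the two open regions. On $G$, from $y(t,x)=y_l(e(t,x))+\int_{e(t,x)}^t b(r,\phi(r,t,x))\,dr$ and $\phi(e(t,x),t,x)=0$ one obtains
\[
\partial_x y=\big(y_l'(e(t,x))-b(e(t,x),0)\big)\,\partial_x e(t,x)+\int_{e(t,x)}^t\partial_2 b(r,\phi(r,t,x))\,\partial_3\phi(r,t,x)\,dr,
\]
and inserting the bounds $|\partial_x e|\le c^{-1}\exp(\tfrac{L}{c}||\partial_2 a||_\infty)$ (from the formula for $\partial_x e$ in Proposition \ref{prop:temps} together with $a\ge c$) and $|\partial_3\phi|\le\exp(\tfrac{L}{c}||\partial_2 a||_\infty)$ (Proposition \ref{prop:flot}) yields $|\partial_x y|\le c^{-1}\exp(\tfrac{L}{c}||\partial_2 a||_\infty)(||y_l'||_\infty+||b||_\infty+L||\partial_2 b||_\infty)$. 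On $\{x>\phi(t,0,0)\}$, from $y(t,x)=y_0(\phi(0,t,x))+\int_0^t b(r,\phi(r,t,x))\,dr$ one gets $\partial_x y=y_0'(\phi(0,t,x))\,\partial_3\phi(0,t,x)+\int_0^t\partial_2 b\,\partial_3\phi\,dr$, and the same exponential bound together with $t\le L/c$ gives $|\partial_x y|\le c^{-1}\exp(\tfrac{L}{c}||\partial_2 a||_\infty)(c||y_0'||_\infty+L||\partial_2 b||_\infty)$. Since $y$ is Lipschitz and the curve $x=\phi(t,0,0)$ is Lebesgue‑null, these two estimates combine into \eqref{eq:estimationII}.

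Finally, \eqref{eq:estimationIII} needs no further differentiation of the formula: by Theorem \ref{theo:TransportSolutionRegu} the function $y$ is a strong solution, so $\partial_t y=b(t,x)-a(t,x)\partial_x y$ almost everywhere, whence $||\partial_t y||_\infty\le ||b||_\infty+||a||_\infty\,||\partial_x y||_\infty$, which is exactly \eqref{eq:estimationIII} once \eqref{eq:estimationII} is in hand. The only delicate point in the whole argument is the $\partial_x y$ computation on $G$: one must track the cancellation produced by $\phi(e(t,x),t,x)=0$, keep the three resulting contributions apart, and watch the signs inside the exponentials so that each is dominated by $\exp(\tfrac{L}{c}||\partial_2 a||_\infty)$ by virtue of the travel‑time bound $t-e(t,x)\le L/c$.
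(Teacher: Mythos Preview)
Your proof is correct and is precisely the elaboration the paper has in mind: its own proof consists of the single line ``Straightforward from \eqref{eq:formule},'' and what you have written is exactly the straightforward computation from that formula, using Propositions \ref{prop:flot} and \ref{prop:temps} and the travel-time bound $t-e(t,x)\le L/c$.
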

\begin{proof}
    Straightforward from \eqref{eq:formule}.
\end{proof}

\begin{prop}
 Let $(t,x)\in[0,T]\times [0,L]$ and  let $\{ a_n\} \subset  \mathcal{C}^0([0,T]\times [0,L])\cap L^\infty(0,T,\Lip([0,1]))$ be  a sequence such that 
 $||a_n||_{L^\infty(0,T 
 ,\Lip([0, L 
 ]))}$  is bounded and  
 $$||a_n-a||_{\mathcal{C}^0([0,T]\times [0,L])}\rightarrow 0  \textrm{ as } n\to +\infty,  $$
 and let $\{ (t_n,x_n) \} \subset  [0,T]\times [0,L]$ be a sequence such that  $(t_n,x_n)\rightarrow (t,x)$. Then 
$$e_n(t_n,x_n)\rightarrow e(t,x). $$
\end{prop}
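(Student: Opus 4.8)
The plan is to reduce the statement to the continuous dependence of the characteristic flow on the velocity field in the uniform topology, and then exploit the strict monotonicity of the characteristics coming from the lower bound on the speed. First I would recall that the assumption \eqref{PP2} is in force in our application, with a constant $c>0$ common to $a$ and to all the $a_n$ (it is exactly \eqref{Y1} there); consequently $s\mapsto\phi_{a_n}(s,t',x')$ is strictly increasing for every $n$, and likewise $s\mapsto\phi_a(s,t',x')$. This monotonicity puts the entry time in a usable form: since $\phi_{a_n}(s,t_n,x_n)\le x_n\le L$ for all $s\le t_n$, the set $F_n^-$ attached to $(t_n,x_n)$ is simply $\{r\in[0,t_n]:\phi_{a_n}(r,t_n,x_n)\ge 0\}$, hence an interval $[e_n(t_n,x_n),t_n]$. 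One thus obtains the dichotomy: either $\phi_{a_n}(0,t_n,x_n)\ge 0$ and $e_n(t_n,x_n)=0$, or $\phi_{a_n}(0,t_n,x_n)<0$ and $e_n(t_n,x_n)$ is the unique zero in $(0,t_n)$ of $s\mapsto\phi_{a_n}(s,t_n,x_n)$. The same dichotomy holds for $e(t,x)$ with $a$ and $(t,x)$ in place of $a_n$ and $(t_n,x_n)$.

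Next I would establish the key convergence
$$\sup_{s\in[0,T]}\bigl|\phi_{a_n}(s,t_n,x_n)-\phi_a(s,t,x)\bigr|\xrightarrow[n\to\infty]{}0 .$$
This is a standard continuous-dependence estimate: starting from the integral form \eqref{PP1}, Gronwall's lemma bounds the left-hand side, before passing to the limit, by $\bigl(|x_n-x|+C|t_n-t|+C\|a_n-a\|_{\Cc^0([0,T]\times[0,L])}\bigr)e^{T\Lambda}$, where $\Lambda:=\sup_n\|a_n\|_{L^\infty(0,T;\Lip([0,L]))}<+\infty$ and $C$ depends only on $\sup_n\|a_n\|_{\Cc^0([0,T]\times[0,L])}$ and on $T$; each of the three terms in parentheses tends to $0$ by hypothesis. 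Note that only the uniform Lipschitz-in-$x$ bound on the $a_n$ is used here, not a $\Cc^1$ convergence — this is exactly the role of the $L^\infty(0,T;\Lip)$ bound in the assumptions.

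Finally I would pass to the limit via a subsequence argument. The numbers $e_n(t_n,x_n)$ all lie in $[0,T]$, so it suffices to show that any subsequential limit $\ell$ equals $e(t,x)$. Fix a subsequence along which $e_{n}(t_n,x_n)\to\ell$. If $e_n(t_n,x_n)=0$ for infinitely many indices of that subsequence, then $\phi_{a_n}(0,t_n,x_n)\ge 0$ along those indices, and the uniform convergence of the flows together with the continuity of $\phi_a$ yields $\phi_a(0,t,x)\ge 0$, so $e(t,x)=0=\ell$. Otherwise $e_n(t_n,x_n)>0$ for all large $n$ of the subsequence, hence $\phi_{a_n}(e_n(t_n,x_n),t_n,x_n)=0$; decomposing this quantity through $\phi_a(e_n(t_n,x_n),t,x)$ and using the previous step together with the continuity of $\phi_a$, we obtain $\phi_a(\ell,t,x)=0$. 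By the dichotomy above, $\ell$ is then forced to be $0$ when $\phi_a(0,t,x)\ge 0$ (by strict monotonicity of the characteristics) and to be the unique zero $e(t,x)$ of $s\mapsto\phi_a(s,t,x)$ when $\phi_a(0,t,x)<0$; in all cases $\ell=e(t,x)$.

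I expect the one genuinely delicate point to be the borderline case $\phi_a(0,t,x)=0$, i.e.\ when $(t,x)$ sits on the curve $x=\phi_a(t,0,0)$ across which $e$ switches between the value $0$ and positive values; it is precisely the strict positivity $a\ge c>0$, hence the strict monotonicity of the characteristics, that prevents a spurious positive subsequential limit there. (Applied with $a_n\equiv a$, the proposition moreover yields the joint continuity of $e$ on $[0,T]\times[0,L]$, which is implicit in Section \ref{sec:Transport}.) The Gronwall estimate of the second step is entirely routine once the uniform Lipschitz bound is invoked.
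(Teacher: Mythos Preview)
Your argument is correct: the reduction to uniform convergence of the flows via Gronwall, followed by the subsequence argument exploiting the strict monotonicity of the characteristics (from $a\ge c>0$), handles all cases including the borderline $\phi_a(0,t,x)=0$. The paper does not actually give its own proof of this proposition but simply refers to the Appendix of \cite{P}; your write-up is precisely the standard argument one expects to find there, so there is nothing substantive to compare.
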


\begin{proof}
    See the Appendix in \cite{P}.
\end{proof}
\begin{theo}\label{theo:convTransport}
    Let $y_n$,  $a_n$,  $b_n$, $y_{0,n}$ and $y_{l,n}$ be Lipschitz continuous functions such
    that 
    \begin{enumerate}
        \item[(i)] for any $n\geq 0$, the function  $y_n$ is a strong solution of
    \begin{equation}
    \label{AAA1}
    \begin{cases}
        \partial_t y_n+a_n(t,x)\partial_x y_n=b_n(t,x),\\
        y_n(t,0)=y_{l,n}(t),\\
        y_n(0,x)=y_{0,n}(x);
    \end{cases}\qquad (t,x)\in (0,T)\times (0,L)
    \end{equation}
\item[(ii)] the sequence $(a_n)_{n\geq 0}$ is bounded in $\Lip([0,T]\times [0,L])$ and it converges uniformly on $[0,T]\times [0,L]$ towards
    a function $a$;
\item[(iii)] the sequence $(b_n)_{n\geq 0}$ is bounded in $\Lip([0,T]\times [0,L])$ and  it converges uniformly on $[0,T]\times [0,L]$ towards
    a function $b$;
\item[(iv)] the sequence $(y_{l,n})_{n\geq 0}$ is bounded in $\Lip([0,T])$ and it converges uniformly on $[0,T]$ towards a
    function $y_l$;
\item[(v)] the sequence $(y_{0,n})_{n\geq 0}$ is bounded in $\Lip([0,T])$ and it converges uniformly on $[0,T]$ towards a
    function $y_0$.
\end{enumerate}
Then there exists a Lipschitz continuous  function $y$ on $[0,T]\times [0,L]$ such that
    $$y_n \to y \text{ in }\Cc^0([0,T]\times [0,L]),$$ 
    and $y$ is the unique solution of
    \begin{equation}
    \label{AAA2}
    \begin{cases}
        \partial_t y+a(t,x)\partial_x y=b(t,x),\\
        y(t,0)=y_l(t),\\
        y(0,x)=y_0(x).
    \end{cases}\qquad (t,x)\in (0,T)\times (0,L).
    \end{equation}
\end{theo}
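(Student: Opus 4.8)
The plan is to prove the theorem by compactness: first obtain uniform Lipschitz bounds on the $y_n$, then extract a uniformly convergent subsequence, pass to the limit in the (linear) weak formulation of Definition~\ref{defi7}, and finally identify the limit via the uniqueness statement of Proposition~\ref{prop10}. Note that the explicit characteristics formula \eqref{eq:formule} is of no direct use here, since it requires $\Cc^1$ data together with the compatibility condition $y_l(0)=y_0(0)$, neither of which is assumed.

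\emph{Step 1: uniform bounds and compactness.} Under the standing lower bound $a_n\geq c>0$ (inherited in the application from \eqref{Y1}), Corollary~\ref{coro:estimeeTransport} applies to each $y_n$ and bounds $\|y_n\|_\infty$, $\|\partial_x y_n\|_\infty$ and $\|\partial_t y_n\|_\infty$ by a constant depending only on $c$, $L$ and on $\|a_n\|_\infty$, $\|\partial_2 a_n\|_\infty$, $\|b_n\|_\infty$, $\|\partial_2 b_n\|_\infty$, $\|y_{l,n}'\|_\infty$, $\|y_{0,n}'\|_\infty$ --- all of them bounded uniformly in $n$ by hypotheses (ii)--(v). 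Thus $(y_n)$ is bounded in $\Lip([0,T]\times[0,L])$, and by the Arzel\`a--Ascoli theorem a subsequence $(y_{n_k})$ converges uniformly on $[0,T]\times[0,L]$ to a limit $y$, which is again Lipschitz (a uniform limit of uniformly Lipschitz functions is Lipschitz, with the same constant).

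\emph{Step 2: stability of the weak formulation.} By Proposition~\ref{prop10}(1), each strong solution $y_n$ of \eqref{AAA1} is also a weak solution, so for every admissible test function $\psi\in\Cc^1$ with $\psi(\cdot,L)=0$ and $\psi(T,\cdot)=0$,
\[
\int_0^T\!\!\!\int_0^L \big[ y_n\big(\partial_t\psi + a_n\partial_x\psi + (\partial_x a_n)\psi\big) + b_n\psi \big]\,dx\,dt + \int_0^T a_n(t,0)\,y_{l,n}(t)\,\psi(t,0)\,dt + \int_0^L y_{0,n}(x)\,\psi(0,x)\,dx = 0.
\]
Passing to the limit along $n_k$: the products $y_{n_k}\partial_t\psi$, $y_{n_k}a_{n_k}\partial_x\psi$, $b_{n_k}\psi$ and the two boundary integrands converge uniformly, so their integrals converge; in the remaining term $\int_0^T\!\!\int_0^L y_{n_k}(\partial_x a_{n_k})\psi$ one uses that $y_{n_k}\psi\to y\psi$ uniformly (hence strongly in $L^1$) while $(\partial_x a_{n_k})$ is bounded in $L^\infty$ and converges to $\partial_x a$ in $\Dc'$, hence weak-$\ast$ in $L^\infty$; the $L^1$--$L^\infty$ pairing then passes to the limit. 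Therefore $y$ satisfies the weak formulation of \eqref{AAA2}.

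\emph{Step 3: identification and conclusion.} The limit data $a,b,y_l,y_0$ are Lipschitz, being uniform limits of sequences bounded in $\Lip$; hence Proposition~\ref{prop10}(2) guarantees that \eqref{AAA2} has at most one weak solution, which must coincide with the $y$ produced in Step~2, regardless of the extracted subsequence. A standard argument promotes subsequential convergence to convergence of the whole sequence: were $(y_n)$ not uniformly convergent to $y$, some subsequence would remain at $\Cc^0$-distance $\geq\delta_0>0$ from $y$, yet by Steps~1--2 it would have a further subsequence converging uniformly to a weak solution of \eqref{AAA2}, necessarily $y$, a contradiction. Finally, $y$ being Lipschitz and a weak solution, Proposition~\ref{prop10}(1) identifies it as the strong solution of \eqref{AAA2}. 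The one subtle point in this scheme --- and the reason for keeping the full weak formulation of Definition~\ref{defi7} rather than integrating by parts --- is the limit of $\int\!\!\int y_n(\partial_x a_n)\psi$, where $(\partial_x a_n)$ converges only weak-$\ast$; this is rescued by the \emph{strong} (uniform) convergence of $y_n$. All other steps are direct appeals to the Appendix results (Corollary~\ref{coro:estimeeTransport}, Proposition~\ref{prop10}) or routine compactness/uniqueness arguments.
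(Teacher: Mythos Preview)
Your proof is correct and follows essentially the same route as the paper's: uniform Lipschitz bounds from Corollary~\ref{coro:estimeeTransport}, Arzel\`a--Ascoli compactness, passage to the limit in the weak formulation of Definition~\ref{defi7} (handling the $\partial_x a_n$ term via weak-$\ast$ convergence against the strongly convergent factor $y_{n_k}\psi$), and identification of the limit via the uniqueness in Proposition~\ref{prop10}. Your write-up is in fact somewhat more detailed than the paper's, spelling out the subsequence-to-full-sequence argument and the final identification of $y$ as a strong solution.
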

\begin{proof}
Since $y_n$ is the unique solution of \eqref{AAA1}, it follows from Corollary
\ref{coro:estimeeTransport} that the sequence $(y_n)_{n\geq 0}$ is bounded in
$\Lip([0,T]\times [0,L])$. From Ascoli-Arzela theorem, we can extract
a subsequence $(y_{n_k})_{k\geq 0}$ which converges uniformly towards a function $y$. On the other hand, 
since $(\partial_x a_n)_{n\geq 0}$ is bounded in $L^\infty((0,T)\times (0,L))$ and since the
only possible weak$-*$ limit of a convergent subsequence is $\partial_x a$, we have by weak$-*$ compactness that 
$\partial_x a_n\to \partial_x a$ weakly$-*$ in $L^\infty$. But this is enough to pass to the limit as $k\to +\infty$ 
in the weak formulation 
    \begin{multline}
    \int_0^T{\int_0^L [{y_{n_k}(t,x)\left(\partial_t \psi(t,x) + a_{n_k}(t,x)\partial_x
                \psi(t,x)+\partial_x a_{n_k}(t,x)\psi(t,x)\right) +b_{n_k}(t,x)
            \psi(t,x)]dx}dt}\\
    +\int_0^T{  a_{n_k}(t,0) y_{l,n_k}(t)\psi(t,0)dt}+\int_0^L{y_{0,n_k}(x)\psi(0,x)dx}=0,
\end{multline}
where $\psi$ is any function as in Definition \ref{defi7}. 
We arrive to
\begin{multline}
    \int_0^T{\int_0^L{[y(t,x)\left(\partial_t \psi(t,x) + a(t,x)\partial_x
                \psi(t,x)+\partial_x a(t,x)\psi(t,x)\right) +b(t,x)
            \psi(t,x) ] dx}dt}\\
    +\int_0^T{ a(t,0) y_l(t)\psi(t,0)dt}+\int_0^L{y_0(x)\psi(0,x)dx}=0,
\end{multline}
that is,  $y$ is a weak solution of the transport equation \eqref{AAA2}.  As the weak solution of \eqref{AAA2} is unique by Proposition \ref{prop10}, we 
infer that  there is only one possible limit for any convergent subsequence of $(y_n)_{n\geq 0}$, so that the whole sequence $(y_n)_{n\ge 0}$ converges uniformly towards $y$. 
\end{proof}

\begin{theo}\label{theo:TransportSolution}
    Theorem \ref{theo:TransportSolutionRegu} and Corollary \ref{coro:estimeeTransport}
 are still valid when we assume merely that the functions  $a$, $b$, $y_l$ and $y_0$ are Lipschitz continuous.
\end{theo}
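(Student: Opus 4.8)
The plan is to deduce the Lipschitz case from the $\Cc^1$ case (Theorem~\ref{theo:TransportSolutionRegu} and Corollary~\ref{coro:estimeeTransport}) by a regularization argument. Given Lipschitz functions $a$, $b$, $y_l$, $y_0$ — with $a\geq c$ on $[0,T]\times[0,L]$, the growth condition \eqref{AP1} after extending $a$ to $\R^+\times\R$ by constants in $x$ outside $[0,L]$ (which preserves both the Lipschitz character and the lower bound $a\geq c$), and the compatibility condition $y_l(0)=y_0(0)$ — I would first mollify: letting $\rho_n$ be a standard mollifier, set $a_n:=a*\rho_n$, $b_n:=b*\rho_n$, $y_{l,n}:=y_l*\rho_n$, and $y_{0,n}:=y_0*\rho_n+\big[y_{l,n}(0)-(y_0*\rho_n)(0)\big]$. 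Each of these is $\Cc^\infty$; convolution does not increase Lipschitz constants and the added constant is $O(1/n)$, so the family is bounded in $\Lip$ and converges uniformly on $[0,T]\times[0,L]$ to $a,b,y_l,y_0$; moreover $a_n\geq c$, and by construction $y_{l,n}(0)=y_{0,n}(0)$, so \eqref{eq:compatibilite} holds for every $n$.

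Next, Theorem~\ref{theo:TransportSolutionRegu} applies to the regularized data and yields, for each $n$, the unique strong solution $y_n\in\Cc^1$ of \eqref{eq:transport} with data $(a_n,b_n,y_{l,n},y_{0,n})$, given explicitly by formula \eqref{eq:formule} with flow $\phi_{a_n}$ and exit function $e_n$. Applying Corollary~\ref{coro:estimeeTransport} to $y_n$ bounds $\|y_n\|_\infty$, $\|\partial_x y_n\|_\infty$ and $\|\partial_t y_n\|_\infty$ in terms of $c$ and of $\|a_n\|_\infty,\|\partial_2 a_n\|_\infty,\|b_n\|_\infty,\|\partial_2 b_n\|_\infty,\|y_{l,n}'\|_\infty,\|y_{0,n}'\|_\infty$; the uniform $\Lip$ bounds on the data make these bounds uniform in $n$, so $(y_n)$ is bounded in $\Lip([0,T]\times[0,L])$. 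Now Theorem~\ref{theo:convTransport} applies (with $y_{l,n},y_{0,n}$ extended trivially to all the required variables): the whole sequence $(y_n)$ converges uniformly on $[0,T]\times[0,L]$ to a Lipschitz function $y$ which is the unique weak solution — hence, by Proposition~\ref{prop10}, the unique strong solution — of \eqref{eq:transport} with the original Lipschitz data. This already establishes existence and uniqueness in the Lipschitz class.

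It remains to recover the explicit formula \eqref{eq:formule} and the estimates \eqref{eq:estimationI}--\eqref{eq:estimationIII} for this $y$. For the formula, I would pass to the limit in \eqref{eq:formule} written for $y_n$: continuous dependence of the ODE flow on the vector field (uniform convergence $a_n\to a$ together with the uniform Lipschitz-in-$x$ bound, via a Gronwall estimate as in the proof of Proposition~\ref{prop:flot}) gives $\phi_{a_n}(\cdot,t,x)\to\phi_a(\cdot,t,x)$ uniformly, and the Proposition stated just before Theorem~\ref{theo:convTransport}, applied with constant sequences $t_n\equiv t$, $x_n\equiv x$, gives $e_n(t,x)\to e(t,x)$. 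Hence $\phi_{a_n}(t,0,0)\to\phi_a(t,0,0)$, so for any $x\neq\phi_a(t,0,0)$ the branch of \eqref{eq:formule} defining $y_n(t,x)$ is eventually the branch corresponding to $y(t,x)$, and passing to the limit in that branch (using also $b_n\to b$, $y_{l,n}\to y_l$, $y_{0,n}\to y_0$ uniformly) shows $y(t,x)$ equals the right-hand side of \eqref{eq:formule} built from $\phi_a,e,a,b,y_l,y_0$; the case $x=\phi_a(t,0,0)$ follows from the continuity of $y$ and of the formula. For the estimates, $\|y_n\|_\infty\to\|y\|_\infty$ by uniform convergence, while the Lipschitz seminorms of $y$ in $x$ and in $t$ are bounded by $\liminf_n$ of those of $y_n$ (lower semicontinuity under uniform convergence); since $\|\partial_2 a_n\|_\infty\leq\|\partial_2 a\|_{L^\infty}$, $\|\partial_2 b_n\|_\infty\leq\|\partial_2 b\|_{L^\infty}$, $\|y_{l,n}'\|_\infty\leq\|y_l'\|_{L^\infty}$, $\|y_{0,n}'\|_\infty\leq\|y_0'\|_{L^\infty}$ and $\|b_n\|_\infty\to\|b\|_\infty$, the right-hand sides of \eqref{eq:estimationI}--\eqref{eq:estimationIII} for $y_n$ are dominated by the same expressions for the limiting data, and the estimates pass to $y$.

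I expect the only genuine subtlety to lie in the bookkeeping of the regularization step — arranging $y_{l,n}(0)=y_{0,n}(0)$ while retaining the uniform Lipschitz bounds and the lower bound $a_n\geq c$ after extension and mollification — and in invoking the flow convergence $\phi_{a_n}\to\phi_a$ and the exit-time convergence $e_n\to e$ correctly so that the limit in \eqref{eq:formule} is legitimate on both sides of the characteristic $x=\phi_a(t,0,0)$. Neither is deep: the flow convergence is a Gronwall argument, and the exit-time convergence is precisely the Proposition established just before Theorem~\ref{theo:convTransport}.
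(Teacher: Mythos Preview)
Your proposal is correct and follows exactly the route the paper intends: the paper states Theorem~\ref{theo:TransportSolution} without proof, but its placement immediately after the flow/exit-time convergence proposition and Theorem~\ref{theo:convTransport} makes clear that the argument is precisely the regularization-and-pass-to-the-limit scheme you carry out. Your handling of the compatibility condition, the lower bound $a_n\ge c$, and the limit in formula~\eqref{eq:formule} via $\phi_{a_n}\to\phi_a$ and $e_n\to e$ fills in the details the paper omits.
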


\section*{Acknowledgments}
VP and LR were partially supported by the ANR project Finite4SoS (ANR-15-CE23-0007).

\end{document}